\def\cxymatrix#1{\xy*[c]\xybox{\xymatrix#1}\endxy}
\theoremstyle{plain}
\newtheorem{thm}{Theorem}[section]
\newtheorem{lemma}[thm]{Lemma}
\newtheorem{prop}[thm]{Proposition}
\newtheorem{cor}[thm]{Corollary}
\theoremstyle{definition}
\newtheorem{defn}[thm]{Definition}
\newtheorem{ex}[thm]{Example}
\newtheorem*{ack}{Acknowledgements}
\theoremstyle{remark}
\newtheorem{remark}[thm]{Remark}
\numberwithin{equation}{section}
\DeclareMathOperator{\id}{id}
\DeclareMathOperator{\Li}{Li}
\DeclareMathOperator{\FT}{FT}
\DeclareMathOperator{\Image}{Im}
\DeclareMathOperator{\SL}{SL}
\DeclareMathOperator{\GL}{GL}
\DeclareMathOperator{\PSL}{PSL}
\DeclareMathOperator{\Tor}{Tor}
\DeclareMathOperator{\Ext}{Ext}
\DeclareMathOperator{\Ker}{Ker}
\DeclareMathOperator{\Aut}{Aut}
\DeclareMathOperator{\ord}{ord}
\DeclareMathOperator{\ind}{ind}
\DeclareMathOperator{\Gal}{Gal}
\newcommand{\R}{\mathbb R} 
\newcommand{\C}{\mathbb C}
\newcommand{\F}{\textnormal F}
\newcommand{\Z}{\mathbb Z}
\newcommand{\Q}{\mathbb Q}
\newcommand{\B}{\mathcal B}
\newcommand{\Pre}{\mathcal P}
\begin{document}
\title[The extended Bloch group and algebraic $K$--theory]{The extended Bloch
group and\\algebraic $K$--theory}
\author{Christian~K. Zickert}
\address{Department of Mathematics\\University of California, Berkeley, CA
94720-3840, USA}
\email{zickert@math.berkeley.edu}
\begin{abstract} We define an extended Bloch group for an arbitrary field
$F$, and show that this group is naturally isomorphic to~$K_3^{\ind}(F)$ if $F$ is a number field. 
This gives an explicit description of~$K_3^{\ind}(F)$ in terms of generators and relations. We give a concrete formula for the regulator, and derive concrete symbol expressions generating the torsion. As an application, we show that a hyperbolic $3$--manifold with finite volume and invariant trace field $k$ has a fundamental class in $K_3^{\ind}(k)\otimes\Z[\frac{1}{2}]$.
\end{abstract} 

\maketitle

\section{Introduction}
The extended Bloch group $\widehat\B(\C)$ was introduced by Walter Neumann~\cite{Neumann} in his computation of the Cheeger--Chern--Simons class related to $\PSL(2,\C)$.
It is a $\Q/\Z$--extension of the classical Bloch group $\B(\C)$, and was used by Neumann to give explicit simplicial formulas for the volume and Chern--Simons invariant of hyperbolic $3$--manifolds; see also Zickert~\cite{Zickert}.
There are two distinct versions of the extended Bloch group. One is isomorphic to $H_3(\PSL(2,\C)^\delta)$ and the other is isomorphic to $H_3(\SL(2,\C)^\delta)$. The $\delta$ denotes that the groups are regarded as discrete groups, and will from now on be omitted. For a discussion of the relationship between the two versions of the extended Bloch group, see Goette--Zickert~\cite{GZ}.

In Section~\ref{BhatE}, we define an extended Bloch group for an arbitrary field $F$. More precisely, we show that there is an extended Bloch group $\widehat\B_E(F)$ for each extension $E$ of $F^*$ by $\Z$, which only depends on the class of $E$ in $\Ext(F^*,\Z)$. The original extended Bloch group is the extended Bloch group associated to the extension of $\C^*$ given by the exponential map. For a large class of fields, including number fields and finite fields, the extended Bloch groups $\widehat\B_E(F)$ are isomorphic, and can be glued together to form an extended Bloch group $\widehat\B(F)$ which only depends on $F$, and admits a natural Galois action.
This is studied in Section~\ref{BhatF}.

By a result of Suslin~\cite{Suslin}, the classical Bloch group $\B(F)$ of an (infinite) field $F$ is isomorphic to the algebraic $K$--group $K_3^{\text{ind}}(F)$ modulo torsion. 
More precisely, Suslin proves that there is an exact sequence
\begin{equation}\label{Suslinexact}
0\to \widetilde{\mu_F}\to K_3^{\textnormal{ind}}(F)\to \B(F)\to 0,
\end{equation}
where $\mu_F$ denotes the roots of unity in $F$, and $\widetilde{\mu_F}$ is the unique non-trivial extension of $\mu_F$ by $\Z/2\Z$ (in characteristic $2$, $\widetilde{\mu_F}=\mu_F$).
Our main result is the following. 

\begin{thm}\label{mainthm}
For every number field $F$, there is a natural isomorphism 
\begin{equation*}\widehat\lambda\colon K_3^{\textnormal{ind}}(F)\cong \widehat\B(F)\end{equation*}
respecting the Galois actions.\qed\end{thm}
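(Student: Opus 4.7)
The plan is to wedge $\widehat\lambda$ between Suslin's sequence \eqref{Suslinexact} and a corresponding exact sequence for $\widehat\B(F)$, and then invoke the five lemma.

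First, I would establish that $\widehat\B(F)$ itself sits in a natural short exact sequence
\begin{equation*}
0\to \widetilde{\mu_F}\to\widehat\B(F)\to \B(F)\to 0,
\end{equation*}
parallel to \eqref{Suslinexact}. This is a computation intrinsic to the construction of $\widehat\B(F)$ in Sections~\ref{BhatE} and~\ref{BhatF}: the kernel of the forgetful map $\widehat\B(F)\to\B(F)$ depends only on the class of $E$ in $\Ext(F^*,\Z)$, and since for a number field $\widehat\B(F)$ is independent of the chosen $E$, one is free to pick a convenient representative for this calculation. A sign analysis of the extended five-term relation then upgrades the naive kernel $\mu_F$ to the non-trivial $\Z/2$-extension $\widetilde{\mu_F}$, in exact parallel with Suslin's sequence.

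Next I would construct $\widehat\lambda$. A natural route is via complex embeddings: each $\sigma\colon F\hookrightarrow\C$ induces a composite
\begin{equation*}
K_3^{\textnormal{ind}}(F)\to K_3^{\textnormal{ind}}(\C)\cong H_3(\SL(2,\C))\cong \widehat\B(\C),
\end{equation*}
where the last isomorphism is Neumann's, tied to the exponential extension of $\C^*$ by $\Z$. I would show that the image of such a composite always lies in the subgroup $\widehat\B_E(F)\subseteq\widehat\B(\C)$ obtained by pulling back the exponential extension along $\sigma$. Assembling these compatible maps over all $\sigma$, using the gluing construction of Section~\ref{BhatF}, defines the single map $\widehat\lambda\colon K_3^{\textnormal{ind}}(F)\to\widehat\B(F)$. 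Galois equivariance is automatic from the functoriality of every step.

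Then I would verify commutativity of
\begin{equation*}
\begin{array}{ccccccccc}
0 & \to & \widetilde{\mu_F} & \to & K_3^{\textnormal{ind}}(F) & \to & \B(F) & \to & 0 \\
 & & \downarrow & & \downarrow\widehat\lambda & & \| & & \\
0 & \to & \widetilde{\mu_F} & \to & \widehat\B(F) & \to & \B(F) & \to & 0,
\end{array}
\end{equation*}
with the identity on $\B(F)$. The right square is essentially definitional, since forgetting the extended $\Z$-data of $\widehat\B$ recovers the classical Bloch--Wigner construction underlying Suslin's isomorphism modulo torsion, so the left vertical map is forced by the diagram.

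The hard part is checking that the induced map on the kernel $\widetilde{\mu_F}$ is itself an isomorphism. This is a torsion computation: matching Suslin's canonical class attached to a root of unity $\zeta\in\mu_F$ against its image in $\widehat\B(F)$. Here I would use the explicit symbol expressions for the torsion of $\widehat\B(F)$ promised in the abstract, writing down the flag cocycle representing $\widehat\lambda$ applied to Suslin's generator, simplifying via the five-term and transfer relations in $\widehat\B(F)$, and comparing with the explicit generator of the kernel produced in the first step. Once the map between the finite groups $\widetilde{\mu_F}$ is shown to be nonzero on each primary component (so an isomorphism, since the two sides have equal order), the five lemma closes the argument and delivers the natural Galois-equivariant isomorphism.
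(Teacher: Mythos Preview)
Your overall architecture---sandwiching $\widehat\lambda$ between Suslin's sequence and the sequence $0\to\widetilde{\mu_F}\to\widehat\B(F)\to\B(F)\to0$ and then applying the five lemma---is exactly the paper's Step~3 (Section~\ref{stepthree}), and the short exact sequence for $\widehat\B(F)$ is Proposition~\ref{injectiveontor}. So the endgame is right.

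The gap is in your construction of $\widehat\lambda$. You propose to define it as the composite $K_3^{\ind}(F)\to K_3^{\ind}(\C)\cong H_3(\SL(2,\C))\cong\widehat\B(\C)$ and then assert that the image lies in the subgroup $\widehat\B_E(F)\subseteq\widehat\B(\C)$. Two problems. First, the inclusion $\widehat\B(F)\hookrightarrow\widehat\B(\C)$ is not available a~priori: in the paper it is Corollary~\ref{Bhatinjective}, deduced \emph{from} Theorem~\ref{mainthm} via Galois descent for $K_3^{\ind}$. Second, and more seriously, even granting injectivity you give no mechanism for why the image lands there. Sah's isomorphism $K_3^{\ind}(\C)\cong H_3(\SL(2,\C))$ is special to $\C$; for a number field there is no expression of $K_3^{\ind}(F)$ through $H_3(\SL(2,F))$ that would let you descend the explicit cocycle formula, and the Hurewicz image of $K_3(F)$ lives in $H_3(\GL(3,F))$, not in $H_3(\SL(2,F))$.

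The paper resolves this by constructing $\widehat\lambda$ directly over $F$; this is the real work (Section~\ref{H3SL3}). One builds an $\SL(3,F)$--invariant map $CF_3\to\widehat\Pre_E(F)$ on tuples of ordered bases of $F^3$ via the ideal-cochain formulas \eqref{alphadets}--\eqref{alphaef}, checks it kills boundaries (Lemma~\ref{boundarytozero}) and lands in $\widehat\B_E(F)$, and then composes with Hurewicz and Suslin stability to get $K_3(F)\to\widehat\B_E(F)$. A separate argument (Section~\ref{steptwo}) shows $K_3^M(F)$ goes to zero. Only \emph{after} $\widehat\lambda$ exists over $F$ does the paper invoke a complex embedding, and then merely to verify that the induced map on $\widetilde{\mu_F}$ is the identity (Lemma~\ref{relationtoSuslin}), by reducing to the known $\C$ case. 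Note also that your right square is not ``essentially definitional'': Lemma~\ref{agreelemma} requires matching the explicit construction with Suslin's map on $H_3(\GL(2,F))$ and showing it vanishes on the diagonal torus, neither of which is automatic. Finally, ``nonzero on each primary component'' does not force an isomorphism of cyclic $p$--groups of equal order (think of multiplication by $p$ on $\Z/p^2\Z$); you need the map to be the identity, which is what Lemma~\ref{relationtoSuslin} actually establishes.
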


In Section~\ref{hypthm} we give the following geometric application generalizing a result of Goncharov~\cite{Goncharov}, who proved the existence of a fundamental class in $K_3^{\ind}(\overline\Q)\otimes\Q$. 
\begin{thm}\label{closedandcusps}
Let $M$ be a complete, oriented, hyperbolic $3$--manifold of finite volume. Let $K$ and $k$ denote the trace field and invariant trace field of $M$. If $M$ is closed, $M$ has a fundamental class $[M]$ in $K_3^{\ind}(K)$ defined up to two-torsion, and satisfying that $2[M]\in K_3^{\ind}(k)$. If $M$ has cusps, there is a fundamental class $[M]$ in $K_3^{\ind}(k)\otimes\Z[\frac{1}{2}]$ such that $8[M]$ is in $K_3^{\ind}(k)$.\qed
\end{thm}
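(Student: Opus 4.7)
The plan is to construct the fundamental class first in the extended Bloch group, using a geometric decomposition of $M$, and then transport it to $K_3^{\ind}$ via Theorem~\ref{mainthm}, exploiting its Galois-equivariance to control the trace field issue in the closed case.

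For cusped $M$, I would fix a geometric ideal triangulation (e.g.\ the Epstein--Penner decomposition, suitably subdivided). Each ideal tetrahedron carries a complex shape parameter, and by Neumann--Reid these lie in the invariant trace field~$k$. Following Neumann~\cite{Neumann} and Zickert~\cite{Zickert}, I would equip the triangulation with a flattening---a system of logarithms of the shape parameters compatible with the edge and cusp gluing equations---to lift the formal sum of tetrahedra to an element of $\widehat\B_E(k)$, with $E$ pulled back from the exponential extension of $\C^*$. The flattening is ambiguous by $2$-torsion coming from integer shifts in the three logarithms per tetrahedron, so after inverting~$2$ it becomes canonical; an explicit count bounds the denominator by $2^3=8$. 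Independence of the triangulation follows from invariance under $2$--$3$ Pachner moves, which is precisely the five-term relation in $\widehat\B$. Applying Theorem~\ref{mainthm} transports the resulting element of $\widehat\B(k)\otimes\Z[\tfrac{1}{2}]$ to $K_3^{\ind}(k)\otimes\Z[\tfrac{1}{2}]$.

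For closed $M$, there is no ideal triangulation, but any smooth triangulation whose $0$-skeleton is lifted to~$\H^3$ can be degenerated by pushing each vertex to~$\partial\H^3$ along a chosen geodesic; the resulting formal sum of (possibly degenerate) ideal simplices has shape parameters in the trace field~$K$, and the developing cocycle forces the gluing data to define a class in $\widehat\B(K)$ modulo $2$-torsion. Theorem~\ref{mainthm} yields $[M]\in K_3^{\ind}(K)$ defined up to $2$-torsion. Since $[K:k]\le 2$, when $K\ne k$ let $\sigma$ generate $\Gal(K/k)$; because the oriented hyperbolic structure of~$M$ is unchanged by the Galois action on the choice of trace lift, $[M]$ is $\sigma$-invariant, so $2[M]=[M]+\sigma[M]$ lies in $K_3^{\ind}(k)$ by the Galois-equivariance asserted in Theorem~\ref{mainthm}.

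The main technical obstacle is the flattening bookkeeping: one must verify that the edge and cusp gluing equations force every flattening obstruction into the $2$-torsion subgroup of $\widehat\B_E$, and that distinct geometric decompositions yield the same class modulo $2$-torsion. In the closed case one additionally needs to handle contributions from degenerate simplices produced by pushing vertices to $\partial\H^3$, showing they cancel via the defining relations of $\widehat\B_E$. Much of this is implicit in~\cite{Neumann,Zickert}, but must here be executed over the number field rather than over~$\C$, which is exactly where the naturality asserted in Theorem~\ref{mainthm} enters.
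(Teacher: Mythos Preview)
Your overall strategy---construct the class in $\widehat\B$ and transport via Theorem~\ref{mainthm}---is the paper's, but both cases have gaps and the closed case takes a genuinely different route.

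\textbf{Closed case.} The claim $[K:k]\le 2$ is false: $K/k$ is a multiquadratic extension whose Galois group is an elementary abelian $2$--group of arbitrary rank (Neumann--Reid). Your ``$2[M]=[M]+\sigma[M]$'' does not repair this; you need $\sigma(2[M])=2[M]$ for \emph{every} $\sigma\in\Gal(K/k)$, and then invoke Galois descent (Corollary~\ref{Galoisdescent}). The paper does not push vertices to $\partial\H^3$ at all---note that fixed points of loxodromics live in quadratic extensions of $K$, so your degenerated cross-ratios need not lie in $K$. Instead the paper fixes a spin structure $\rho$, which lands in $\SL(2,K(\lambda))$ with $[K(\lambda):K]\le 2$, applies the map $\widehat\lambda\colon H_3(\SL(2,K(\lambda)))\to\widehat\B(K(\lambda))$ of Section~\ref{FlatSection} (built from ideal cochains, not ideal simplices), and then descends to $K$ via Galois invariance. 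The $2$--torsion ambiguity is precisely the dependence on the spin structure, controlled by Theorem~\ref{Z2action}: two spin structures differ by a $\Z/2\Z$--cocycle $\alpha$, and $[M_{\alpha\rho}]-[M_\rho]$ is the image of $B\alpha_*([M])\in H_3(\R P^\infty)$. The same $\Z/2\Z$--cocycle argument shows each $\sigma\in\Gal(K/k)$ fixes $2[M_\rho]$, since $\sigma\rho$ and $\rho$ agree in $\PSL(2,\C)$ up to conjugacy.

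\textbf{Cusped case.} The shape parameters do lie in $k$, but the sum $\sum[z_i]$ need \emph{not} lie in $\B(k)$ with the paper's convention for $\wedge^2$: the obstruction $\sum z_i\wedge(1-z_i)$ can be nonzero $2$--torsion (the paper gives $m009$ as an explicit counterexample). So your flattened sum need not lie in $\widehat\B_E(k)$, only in $\widehat\Pre_E(k)$; your ``ambiguous by $2$--torsion from integer shifts'' does not address this. The paper instead routes through the $\PSL$--version $\widehat\B(k)_{\PSL}$ of Section~\ref{BhatPSL}: Theorem~\ref{Zickertthm} gives $[M]\in\widehat\B(K)_{\PSL}$ canonically, then $2[M]\in\widehat\B(k)_{\PSL}$, and the exact sequence~\eqref{hatandbar} with $\Z/4\Z$ kernel shows $4\cdot(2[M])=8[M]$ lifts canonically to $\widehat\B(k)$. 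That is the honest source of the factor $8$, not a cube-of-$2$ count of logarithm choices per tetrahedron.
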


The result is proved using both concrete properties of the extended Bloch group and abstract properties of $K_3^{\ind}(F)$.




There is a regulator map 
\[R\colon K_3^{\textnormal{ind}}(\C)\to \C/4\pi^2\Z.\] 
The regulator is equivariant with respect to complex conjugation, so if $F$ is a number field, we obtain a regulator 
\begin{equation}\label{regonK3}
\widehat B\colon K_3^{\textnormal{ind}}(F)\to (\R/4\pi^2\Z)^{r_1}\oplus (\C/4\pi^2\Z)^{r_2},
\end{equation}
where $r_1$ and $r_2$ are the number of real and (conjugate pairs of) complex embeddings of $F$ in $\C$. This regulator fits into a diagram
\begin{equation*}
\xymatrix{K_3^{\text{ind}}(F)\ar[r]^-{\widehat B}\ar[d]&{(\R/4\pi^2\Z)^{r_1}\oplus (\C/4\pi^2\Z)^{r_2}}\ar[d]\\{\B(F)}\ar[r]^-B& {\R^{r_2}},}
\end{equation*}
where the left vertical map is the map in \eqref{Suslinexact}, and the right vertical map is projection onto the imaginary part.
The lower map $B$ is known as the Borel regulator and has been extensively studied. It is related to hyperbolic volume, and it is known that the image in $\R^{r_2}$ is a lattice whose covolume is proportional to the zeta function of $F$ evaluated at $2$. We refer to Zagier~\cite{Zagier} for a survey. The upper map is much less understood. The real part is related to the Chern-Simons invariant, but little is known about its relations to number theory. 

In section~\ref{BhatF}, we give a concrete formula for $\widehat B$ defined on the extended Bloch group $\widehat\B(F)=K_3^{\textnormal{ind}}(F)$. Elements in $\widehat\B(F)$ are easy to produce, e.g.~using computer software like PARI/GP, and our result can thus provide lots of experimental data for studying the map $\widehat B$. We give an example in Example~\ref{Rexample}.

The torsion in $K_3^{\textnormal{ind}}(F)$ is known to be cyclic of order $w=2\prod p^{\nu_p}$, where
\[\nu_p=\text{max}\{\nu\mid \xi_{p^\nu}+\xi_{p^\nu}^{-1}\in F\}.\]
The product, which is easily seen to be finite, is over all rational primes, and $\xi_{p^\nu}$ is a primitive root of unity of order $p^\nu$. 
This result is due to Merkurjev--Suslin~\cite{MerkurjevSuslin}; see also the survey paper Weibel~\cite{Weibel}. 

In Section~\ref{TorinBhat}, we give explicit elements in $\widehat\B(F)$ generating the torsion. As a corollary, this gives explicit generators of the torsion in the Bloch group. We state this result below. Let $\B(F)_p$ denote the elements in $\B(F)$ of order a power of $p$. By \eqref{Suslinexact}, the order of $\B(F)_p$ is $p^{\nu'_p}$, where $\nu'_p=\nu_p-\text{max}\{\nu\mid \xi_{p^\nu}\in F\}$.

\begin{thm}\label{torsioninB} Let $F$ be a number field and let $p$ be a prime number with $\nu'_p>0$. Let $x$ be a primitive root of unity of order $p^{\nu_p}$. The elements
\begin{gather*}
\begin{aligned}\beta_{p}&=\sum_{k=1}^{p^{\nu_p}}\left[\frac{(x^{k+1}+x^{-k-1})(x^{k-1}+x^{-k+1})}{(x^k+x^{-k})^2}\right],\\ \beta_{2}&=\sum_{k=1}^{2^{\nu_2-1}}\left[\frac{(x^{k+1}-x^{-k})(x^{k-1}-x^{-k+2})}{(x^k-x^{-k+1})^2}\right]
\end{aligned}
\end{gather*}
generate $\B(F)_p$ for odd $p$ and $p=2$, respectively.\qed
\end{thm}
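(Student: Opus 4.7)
The plan is to use Theorem~\ref{mainthm} to move the question from $\B(F)$ into $\widehat\B(F)\cong K_3^{\ind}(F)$, where the torsion can be tracked by explicit flattenings and detected by the regulator $\widehat B$ of~\eqref{regonK3}.

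A preliminary size count: by Merkurjev--Suslin, $K_3^{\ind}(F)_{\mathrm{tors}}$ is cyclic of order $w=2\prod_p p^{\nu_p}$, and the kernel $\widetilde{\mu_F}$ in~\eqref{Suslinexact} has order $2|\mu_F|$. A direct count on $p$-primary parts shows that $\B(F)_p$ is cyclic of order exactly $p^{\nu'_p}$, so it suffices to exhibit one element of $\B(F)$ whose order is divisible by $p^{\nu'_p}$.

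To verify that $\beta_p$ lies in $\Pre(F)$, set $a_k=x^k+x^{-k}$. The Chebyshev recursion $a_{k+1}+a_{k-1}=a_1a_k$ with $a_0=2$, $a_1=x+x^{-1}\in F$ places each $a_k$ in $F$, and a short computation using $a_k^2=a_{2k}+2$ and $a_{k+1}a_{k-1}=a_{2k}+a_2$ yields
\[
1-\frac{a_{k+1}a_{k-1}}{a_k^2}=\frac{2-a_2}{a_k^2}=-\Bigl(\frac{x-x^{-1}}{a_k}\Bigr)^2.
\]
The Steinberg sum $\sum_k z_k\wedge(1-z_k)$ in $\wedge^2 F^*$ then splits into a part involving $[-1]$ and $[x-x^{-1}]$, which vanishes by the telescoping identity $\sum_k([a_{k+1}]+[a_{k-1}]-2[a_k])=0$, and a part involving $[a_k]$, which telescopes cyclically. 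Hence $\beta_p\in\B(F)$. The $p=2$ case is parallel, using the antisymmetric combinations $c_k=x^k-x^{-k+1}$ and the fact that the nontrivial element of $\Gal(F(x)/F)$ sends $c_k$ to $-x^{-2k+1}c_k$, which keeps the ratio $c_{k+1}c_{k-1}/c_k^2$ in $F$.

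The final step is to lift $\beta_p$ to a flattened class $\widehat\beta_p\in\widehat\B(F)$ by choosing canonical logarithmic branches on each summand, and to evaluate $\widehat B(\widehat\beta_p)$ via the Rogers-dilogarithm formula of Section~\ref{BhatF}. The factorization $1-z_k=-((x-x^{-1})/a_k)^2$ provides a second telescoping that should reduce $\widehat B(\widehat\beta_p)$, at a chosen embedding, to a rational multiple of $2\pi^2/p^{\nu_p}$ modulo $4\pi^2\Z$. Showing this multiple is \emph{primitive} --- equivalently, that the sum of Rogers-dilogarithm values does not collapse modulo a proper divisor of $p^{\nu_p}$ --- is the step I expect to be the main obstacle, since it requires a delicate trigonometric identity on dilogarithm sums at $p^{\nu_p}$-th roots of unity. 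Once primitivity is established, cyclicity forces $\widehat\beta_p$ to generate the $p$-primary torsion of $K_3^{\ind}(F)$, and projection via~\eqref{Suslinexact} yields the theorem.
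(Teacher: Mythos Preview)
Your outline has the right overall shape but leaves the crucial step unproved. You explicitly flag primitivity of the regulator value as ``the main obstacle,'' and indeed it is: reducing $\widehat B(\widehat\beta_p)$ to a primitive multiple of $2\pi^2/p^{\nu_p}$ by direct telescoping would amount to a nontrivial dilogarithm identity at $p^{\nu_p}$th roots of unity, and you give no mechanism for establishing it. There is also a smaller issue earlier: ``choosing canonical logarithmic branches'' on each summand does not automatically produce an element of $\widehat\B(F)$ --- you must arrange that $\widehat\nu$ of the lift vanishes in $\wedge^2(E)$, not merely in $\wedge^2(F^*)$, and you have not indicated how.

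The paper avoids both difficulties by reversing the logic. Rather than starting from $\beta_p$ and trying to compute its order, it starts from an element whose order is known \emph{a priori} and then computes that it projects to $\beta_p$. Concretely, one takes the matrix $g=\bigl(\begin{smallmatrix}x+x^{-1}&-1\\1&0\end{smallmatrix}\bigr)\in\SL(2,F)$, which is conjugate over $F(x)$ to $\mu=\bigl(\begin{smallmatrix}x&0\\0&x^{-1}\end{smallmatrix}\bigr)$. The standard cycle $\sum_k\langle g\vert g^k\vert g\rangle$ generates $H_3(\langle g\rangle)\cong\Z/p^{\nu_p}\Z$ (Proposition~\ref{Hofcyclic}), and the Dupont--Sah theorem (Theorem~\ref{torsiondiag}) guarantees that its image in $H_3(\SL(2,\C))\cong\widehat\B(\C)$ has order exactly $p^{\nu_p}$; hence so does $\widehat\lambda([g])\in\widehat\B(F)$. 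An explicit evaluation of $\widehat\lambda$ on this cycle via \eqref{giflat}, with a suitable choice of $v_1,v_2$, then produces precisely the cross-ratios $z_k$ defining $\beta_p$. The membership $\beta_p\in\B(F)$ and the construction of the lift in $\widehat\B(F)$ come for free from this description, and no regulator computation or dilogarithm identity is needed. For $p=2$ an additional $2$--divisibility argument in $\widehat\Pre_E(F)$ is required, but it is again elementary once the lift is given by $\widehat\lambda$.
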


Note that the torsion in the Bloch group comes from a totally real abelian subfield of $F$.

\begin{remark}The torsion in the Bloch group of a number field is related to conformal field theory, and there is an interesting conjecture relating torsion in the Bloch group to modularity of a certain $q$--hypergeometric function. See Nahm~\cite{Nahm}, or Zagier~\cite{Zagier}.\end{remark} 

\begin{ack} I wish to thank Ian Agol, Johan Dupont, Stavros Garoufalidis, Matthias Goerner, Dylan Thurston and, in particular, Walter Neumann for helpful discussions. I also wish to thank Walter Neumann for his comments on earlier drafts of the paper. Parts of this work was done during a visit to the Max Planck Institute of Mathematics, Bonn. I wish to thank MPIM for its hospitality, and for providing an excellent working environment. 
\end{ack}

\section{Preliminaries}\label{Preliminaries}
For an abelian group $A$, we define 
\[\wedge^2(A)=A\otimes_\Z A\big/ \langle a\otimes b+b\otimes a\rangle.\] 
Note that $2a\wedge a=0$, but $a\wedge a$ is generally not $0$.

For a set $X$, we let $\Z[X]$ denote the free abelian group generated by $X$.
\subsection{The classical Bloch group}
Let $F$ be a field and let $F^*$ be the multiplicative group of units in $F$.
Consider the set of \emph{five term relations}
\[\FT=\big\{\big(x,y,\frac{y}{x},\frac{1-x^{-1}}{1-y^{-1}},\frac{1-x}{1-y}\big)\bigm\vert
x\neq y \in F\setminus\{0,1\}\big\}.\]
One can show that there is a chain complex
\begin{equation}\label{BlochSuslin}
\xymatrix{{\Z[\FT]}\ar[r]^-\rho&{\Z[F\setminus \{0,1\}]}\ar[r]^-\nu
&{\wedge^2(F^*)}},
\end{equation}
with maps defined by
\begin{gather*}
\rho([z_0,\ldots,z_4])=[z_0]-[z_1]+[z_2]-[z_3]+[z_4],\\
\nu([z])=z\wedge (1-z).
\end{gather*}
\begin{remark} By Matsumoto's theorem, the cokernel of $\nu$ is $K_2(F)$.\end{remark}

\begin{defn}
The \emph{Bloch group} of $F$ is the quotient $\B(F)=\Ker(\nu)/\Image(\rho)$. It is a subgroup of the \emph{pre-Bloch group} 
 $\Pre(F)=\Z[F\setminus\{0,1\}]/\Image(\rho)$. 
\end{defn}
\subsection{The extended Bloch group of $\C$}
The original reference is Neumann~\cite{Neumann}; see also Dupont--Zickert \cite{DupontZickert} and
Goette--Zickert \cite{GZ}. We stress that our extended Bloch group is what Neumann calls the
\emph{more} extended Bloch group \cite[Section 8]{Neumann}.

Consider the set 
\[\widehat \C=\big\{(w_0,w_1)\in \C^2\bigm \vert \exp(w_0)+\exp(w_1)=1\big\}.\]
We will refer to elements of $\widehat \C$ as \emph{flattenings}. 
We can view $\widehat \C$ as the Riemann surface for the multivalued function $(\log(z),\log(1-z))$, 
and we can thus write a flattening as $[z;2p,2q]=(\log(z)+2p\pi i,\log(1-z)+2q\pi i)$. This notation depends on a choice of logarithm branch which we will
fix once and for all. The map $\pi\colon \widehat \C\to \C\setminus\{0,1\}$ taking a
flattening $[z;2p,2q]$ to $z$ is the universal
abelian cover of $\C\setminus\{0,1\}$. 

\begin{remark}\label{ChangeSign} Neumann considered the Riemann surface of $(\log(z),-\log(1-z))$,
and considered a flattening $[z;2p,2q]$ as a triple $(w_0,w_1,w_2)$, with $w_0=\log(z)+2p\pi
i$, $w_1=-\log(1-z)+2q\pi i$ and $w_2=-w_1-w_0$. One translates between the two
definitions by changing the sign of $w_1$, or equivalently, by changing the sign of
$q$.
\end{remark}

Let $\FT_0=\big\{(x_0,\ldots,x_4)\in \FT\bigm \vert 0<x_1<x_0<1\big\}$, and define
the set of \emph{lifted five term relations} $\widehat{\FT}\subset (\widehat \C)^5$ to be the component of 
the preimage of $\FT$ containing all points $\big([x_0;0,0],\dots,[x_4;0,0]\big)$ with $(x_0,\dots, x_4)\in \FT_0$.

There is a chain complex
\begin{equation}
\xymatrix{{\Z[\widehat{\FT}]}\ar[r]^-{\widehat\rho}&{\Z[\widehat \C]}\ar[r]^-{\widehat\nu}
&{\wedge^2(\C),}}
\end{equation}
with maps defined by
\begin{gather*}
\widehat\rho([(w_0^0,w_1^0),\ldots,(w_0^4,w_1^4)])=\sum_{i=0}^4(-1)^i[(w_0^i,w_1^i)],\\
\widehat\nu([(w_0,w_1)])=w_0\wedge w_1.
\end{gather*}
\begin{defn} The \emph{extended Bloch group} is the quotient $\widehat \B(\C)=\Ker(\widehat\nu)/\Image(\widehat
\rho)$. It is a subgroup of the \emph{extended pre-Bloch group} $\widehat\Pre(\C)=\Z[\widehat\C]/\Image(\widehat \rho)$. 
\end{defn}


 \begin{thm}\label{bigdiag} Let $\mu_{\C}$ denote the roots of unity in $\C^*$. There is a commutative diagram as below with exact rows and columns.\qed\end{thm}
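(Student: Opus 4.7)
The diagram in question is the standard Neumann--style comparison diagram; based on the context I expect it to take the form
\begin{equation*}
\xymatrix@R=1.4em@C=1.4em{
0\ar[r]&\mu_\C\ar[r]\ar[d]&\widehat\B(\C)\ar[r]\ar[d]&\B(\C)\ar[r]\ar[d]&0\\
0\ar[r]&\C/\pi^2\Z\ar[r]&\widehat\Pre(\C)\ar[r]&\Pre(\C)\ar[r]&0
}
\end{equation*}
with the rightward arrows being the covering projection $\pi\colon\widehat\C\to\C\setminus\{0,1\}$, $[z;2p,2q]\mapsto z$, extended linearly, and the vertical arrows being the natural inclusions. Exactness of the columns identifies the cokernels on both sides with the appropriate image in $\wedge^2\C$ under $\widehat\nu$ and $\nu$.

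My first step is to verify that the horizontal maps are well defined. The projection $\pi$ sends lifted five-term relations to five-term relations (by definition of $\widehat\FT$ as a component of the preimage), so it descends to a map $\widehat\Pre(\C)\to\Pre(\C)$; a diagram chase using $\pi(w_0\wedge w_1)=z\wedge(1-z)=\nu(z)$ shows that $\widehat\nu=0$ implies $\nu=0$, so the restriction to $\widehat\B(\C)\to\B(\C)$ is well defined as well. Commutativity of the squares is then tautological.

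The main computation is identifying the kernel of $\widehat\Pre(\C)\to\Pre(\C)$ as $\C/\pi^2\Z$. Given $\xi=\sum n_i[z_i;2p_i,2q_i]\in\widehat\Pre(\C)$ projecting to zero, I would write $\sum n_i[z_i]=\rho(\sum m_j(z^j_0,\dots,z^j_4))$ in $\Z[\C\setminus\{0,1\}]$ and lift each five-term tuple to an element of $\widehat\FT$. Subtracting, $\xi$ reduces to a combination of ``vertical'' differences $[z;2p,2q]-[z;0,0]$. By directly computing $\widehat\rho$ applied to lifted five-term tuples that differ from the reference lift $\widehat\FT$ by shifts in the $p$ or $q$ coordinates, one obtains the telescoping identity $[z;2p+2,2q]-[z;2p,2q]\equiv 2\pi i\cdot\log(1-z)\pmod{\widehat\rho}$, and similarly for $q$; summing these shows that the kernel is a quotient of $\C$. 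The embedding $\mu_\C\hookrightarrow\widehat\B(\C)$ then arises by restricting to the kernel of $\widehat\nu$, where the map $\C/\pi^2\Z\xrightarrow{\widehat\nu}\wedge^2\C$ (which is essentially $w\mapsto 2\pi i\wedge w$ up to sign) has kernel precisely the torsion subgroup $\Q\pi^2/\pi^2\Z\cong\mu_\C$.

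The main obstacle is pinning down the exact size of the kernel. Showing it is a quotient of $\C$ is the easier direction; showing that $\pi^2\Z$ is the precise subgroup killed---not $4\pi^2\Z$, not $0$---requires both producing the relation $\pi^2\equiv 0$ by a careful application of five-term identities at degenerate values (e.g.\ $x\to 0,1,\infty$, or taking $(x,y)=(\tfrac12,\text{limit})$), and producing a left inverse to $\C/\pi^2\Z\to\widehat\Pre(\C)$ to show nothing more is killed. A natural candidate for such a retraction is a cocycle built from the Rogers dilogarithm, which takes values in $\C/\pi^2\Z$ and vanishes on $\widehat\rho(\widehat\FT)$. Once the bottom row is exact, the snake lemma applied to the vertical inclusions, together with the identification of the cokernel column as the image of $\widehat\nu$ (forcing $\ker$ in the top row to be $\mu_\C$), completes the diagram.
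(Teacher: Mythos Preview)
Your reconstruction of the diagram is not correct, and this undermines the rest of the proposal. The diagram in the paper has three rows, not two: the middle row is the extended Bloch--Suslin complex $0\to\widehat\B(\C)\to\widehat\Pre(\C)\xrightarrow{\widehat\nu}\wedge^2(\C)\to K_2(\C)\to 0$, the bottom row is the classical one $0\to\B(\C)\to\Pre(\C)\xrightarrow{\nu}\wedge^2(\C^*)\to K_2(\C)\to 0$, and the \emph{top} row is $0\to\mu_\C\to\C^*\to\C^*/\mu_\C\to 0$. The vertical maps from the top row are the map $\chi$ (into $\widehat\B(\C)$ and $\widehat\Pre(\C)$) and the map $\beta(z)=\log(z)\wedge 2\pi i$ (into $\wedge^2(\C)$). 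So the kernel of $\widehat\Pre(\C)\to\Pre(\C)$ is $\C^*$, which under the identification $z\mapsto -2\log(z)$ is $\C/4\pi i\Z$, not $\C/\pi^2\Z$. Your telescoping step is on the right track---it is essentially the content of the map $\chi$---but the periodicity is $4\pi i$, not $\pi^2$; you appear to have confused the kernel itself with its image under the regulator $R$ (the restriction of $R$ to $\C/4\pi i\Z\subset\widehat\Pre(\C)$ is multiplication by $-\pi i$, which lands in $\C/4\pi^2\Z$).

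More importantly, the paper does not prove this theorem. The \texttt{\textbackslash qed} at the end of the statement signals that it is quoted as a known result; the references are Neumann's original paper and Goette--Zickert. The paper's own contribution is the algebraic generalisation in Section~3 (Theorem~3.11, Corollary~3.18, Proposition~3.19), where the analogue of the second column is proved by purely algebraic manipulations of the lifted five-term relations---no analytic continuation and no Rogers dilogarithm. If you want to supply a proof in the spirit of the paper, you should look at Lemmas~3.5--3.8 and Theorem~3.11, which establish that $\chi\colon E/2\Z\to\widehat\Pre_E(F)$ has image exactly the kernel of $\pi$, and then at Proposition~3.19, which identifies the kernel of $\beta$ with $E(\mu_F)$.
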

\begin{equation}
\cxymatrix{{&0\ar[d]&0\ar[d]&0\ar[d]&&\\0\ar[r]&\mu_{\C}\ar[r]\ar[d]^-{\chi}&{\C^*}\ar[r]\ar[d]^{\chi}&{\C^*/\mu_{\C}}\ar[r]\ar[d]^\beta&0\ar[d]&\\
0\ar[r]&{\widehat \B(\C)}\ar[r]\ar[d]^-\pi&{\widehat \Pre(\C)}\ar[r]^-{\widehat \nu}\ar[d]^-\pi&{\wedge^2(\C)}\ar[r]\ar[d]^\epsilon&
{K_2(\C)}\ar@2{-}[d]\ar[r]&0\\0\ar[r]&{\B(\C)}\ar[r]\ar[d]&{\Pre(\C)}\ar[r]^-\nu\ar[d]&{\wedge^2(\C^*)\ar[d]}\ar[r]&{K_2(\C)}\ar[r]\ar[d]&0\\&0&0&0&0}}
\end{equation}

We refer to Goette--Zickert \cite{GZ} or Section~\ref{BhatE} below for the definition of $\chi$. The
other maps are defined as follows:
\begin{align*}
\beta([z])&=\log(z)\wedge 2\pi i;\\
\epsilon(w_1\wedge w_2)&=\exp(w_1)\wedge \exp(w_2);\\
\pi([z;2p,2q])&=[z].
\end{align*}

\begin{remark}
There is a similar definition of $\widehat\Pre(F)$ and $\widehat\B(F)$ for a subfield $F$ of $\C$. Theorem~\ref{bigdiag} holds if one replaces $\wedge^2(\C)$ by $\wedge^2(\{w\in
\C\mid \exp(w)\in F^*\})$ and $\mu_\C$ with $\widetilde{\mu_F}=\{w\in\C\mid w^2\in \mu_F\}$. We will generalize to arbitrary fields below.
\end{remark}
\subsubsection{The regulator}
The function $R\colon \widehat \C\to \C/4\pi^2$, given by
\begin{equation}\label{regulator}
[z;2p,2q]\mapsto \Li_2(z)+\frac{1}{2}(\log(z)+2p\pi i)(\log(1-z)-2q\pi i)-\pi^2/6
\end{equation}
is well defined and holomorphic, see e.g.~Neumann~\cite{Neumann} or Goette--Zickert~\cite{GZ}.

It is well known that $L(z)=\Li_2(z)+\frac{1}{2}\log(z)\log(1-z)-\pi^2/6$ satisfies 
\[\sum_{i=1}^4(-1)^i L(z_i)=0 \text{ for } (z_0,\dots,z_4)\in \FT_0,\] and it thus follows by analytical
continuation that $R$ gives rise to a function 
\begin{equation}R\colon \widehat \Pre(\C)\to \C/4\pi^2\Z.
\end{equation} 

We briefly describe a more elegant definition of $R$ due to Don Zagier \cite{Zagier}:
The derivative of $\Li_2(z)$ is $-\log(1-z)/z$.  
It follows that the function $F(x)=\Li_2(1-e^x)$ has derivative $F'(x)=xe^x/(1-e^x)$.
Since this function is meromorphic with simple poles at $2\pi i n$, $n\in \Z$, with corresponding residues $-2\pi i n$, it follows that
$F$ defines a single valued function on $\C\setminus\{2\pi i\Z\}$ with values in $\C/4\pi^2\Z$. 
We can now define 
\begin{equation}\label{Zagierregulator}R\colon \widehat \C\to \C/4\pi^2,\qquad (w_0,w_1)\mapsto F(w_1)+\frac{w_0w_1}{2}-\pi^2/6.\end{equation}
We leave it to the reader to show that this definition of $R$ agrees with the one above.

\subsection{Algebraic $K$--theory and homology of linear groups}\label{AlgKreview}
We give a brief review of the results that we shall need. 

Let $F$ be a field. The algebraic $K$--groups are defined by $K_i(F)=\pi_i(B\GL(F)^+)$. The Milnor $K$--groups $K_*^M(F)$ are defined as the quotient of the tensor algebra of $F^*$ by the two-sided ideal generated by $a\otimes(1-a)$. There is a natural map 
\[K_i^M(F)\to K_i(F)\] 
whose cokernel, by definition, is the \emph{indecomposable} $K$--group $K_i^{\text{ind}}(F)$.

For $F=\C$, there is a \emph{regulator}~$R$ defined as the composition
\[\xymatrix{{K_{2k-1}(\C)}\ar[r]^-H&{H_{2k-1}(\GL(\C))}\ar[r]^-{\hat c_k}&{\C/(2\pi i)^k\Z}},\]
where $H$ is the Hurewicz map, and $\hat c_k$ is the universal Cheeger--Chern--Simons class. 
It is well known that $R$ is $0$ on the image of $K_{2k-1}^M(\C)$, so $R$ induces a regulator 
\[K_{2k-1}^{\text{ind}}(\C)\to \C/(2\pi i)^k\Z.\] 

\begin{thm}[Suslin~\cite{Suslin1046}]\label{Suslinstability}
For any field $F$, there is an isomorphism
\[H_n(\GL(n,F))\cong H_n(\GL(F))\]
induced by inclusion.\qed
\end{thm}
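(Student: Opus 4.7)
The plan is to follow the configuration-complex approach of Suslin. For each $n$ consider the simplicial object $C_*(F^n)$ whose module of $p$-chains is the free abelian group on ordered $(p+1)$-tuples $(v_0,\ldots,v_p)$ of vectors in $F^n$ that are in general position, meaning any $\min(p+1,n)$ of them are linearly independent; the boundary is the usual alternating sum of face maps that omit one vector. The natural $\GL_n(F)$-action on $F^n$ induces a $\GL_n(F)$-action on this complex.

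The first step is to prove that the augmented complex $C_*(F^n)\to\Z\to 0$ is acyclic. The standard trick is to pick an auxiliary vector $v$ in sufficiently general position with respect to a given cycle and define a contracting chain homotopy by $(v_0,\ldots,v_p)\mapsto(v,v_0,\ldots,v_p)$; when $F$ is infinite such a $v$ can always be chosen, and Suslin handles the finite case by a separate argument using field extensions. Given acyclicity, the standard double complex $C_*(F^n)\otimes_{\Z[\GL_n(F)]}B_*$, where $B_*$ is a free resolution of $\Z$ over $\Z[\GL_n(F)]$, produces a spectral sequence converging to $H_*(\GL_n(F))$ with
\[E^1_{p,q}=\bigoplus_{\text{orbits}}H_q(\mathrm{Stab}_{\GL_n(F)}(v_0,\ldots,v_p);\Z_\sigma),\]
via Shapiro's lemma; here $\Z_\sigma$ is a sign local system coming from permutations of a tuple.

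The second step is to identify the stabilizers. Since $\GL_n(F)$ acts transitively on general-position tuples of length $\leq n+1$, one can normalize: the stabilizer of $(e_1,\ldots,e_{p+1})$ for $p+1<n$ is (conjugate to) $\GL_{n-p-1}(F)$ sitting in the lower-right block, while for $p+1\geq n$ the stabilizers are smaller (e.g.\ trivial or the subgroup of diagonal matrices fixing linear relations). Substituting into the $E^1$ page and analyzing the edge differentials, the $H_n(\GL_{n-1}(F))$ piece meets $H_n(\GL_n(F))$ along the columns coming from small $p$.

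The final step is an induction on $n$ to conclude that $H_n(\GL_{n-1}(F))\to H_n(\GL_n(F))$ is surjective and $H_n(\GL_n(F))\to H_n(\GL_{n+1}(F))$ is an isomorphism; passing to the colimit then identifies $H_n(\GL_n(F))$ with $H_n(\GL(F))$. The main obstacle I expect is the acyclicity of $C_*(F^n)$ in low degrees — in particular, the precise stability range where general-position representatives exist and where the contracting homotopy produces new admissible tuples — since this is what ultimately forces the degree bound $n$ rather than $n+1$. Handling $F$ finite (where the supply of generic vectors is limited) would also require the additional base-change argument comparing $F$ to its algebraic closure.
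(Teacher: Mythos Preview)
The paper does not prove this theorem at all: it is quoted as a black box from Suslin's original paper, as indicated by the \qed\ immediately following the statement. There is therefore no proof in the paper to compare your proposal against.

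That said, your sketch is broadly faithful to Suslin's actual argument, so a brief comment is still worthwhile. One imprecision: the stabilizer in $\GL_n(F)$ of a general-position $(p+1)$-tuple with $p+1<n$ is not literally conjugate to $\GL_{n-p-1}(F)$, but rather to the affine group $\GL_{n-p-1}(F)\ltimes M_{(p+1)\times(n-p-1)}(F)$; one needs a separate (easy) center-kills argument to identify its homology with that of $\GL_{n-p-1}(F)$. Also, the difficulty you flag at the end is misplaced: acyclicity of $C_*(F^n)$ is the routine part, while the genuine work lies in analyzing the $E^1$ differentials and establishing the inductive vanishing range that pins down the stability bound. Your inductive statement is also slightly off: Suslin proves that $H_i(\GL_{n}(F))\to H_i(\GL_{n+1}(F))$ is an isomorphism for $i\le n$, which already gives $H_n(\GL_n(F))\cong H_n(\GL(F))$ directly, without needing a separate surjectivity claim for $H_n(\GL_{n-1}(F))\to H_n(\GL_n(F))$.
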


\begin{thm}[Sah~\cite{Sah}]\label{Sahthm}
$K_3^{\ind}(\C)$ is a direct summand of $K_3(\C)$ and the Hurewicz map $H$ induces an isomorphism $K_3^{\ind}(\C)\cong H_3(\SL(2,\C))$.\qed
\end{thm}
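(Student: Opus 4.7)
My plan is to split the statement into two: first, realize $K_3^{\ind}(\C)$ as a direct summand of $K_3(\C)$, and second, identify this summand with $H_3(\SL(2,\C))$ via the Hurewicz map $H$.

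For the direct summand assertion, the key observation is that $\C^*$ is a divisible abelian group (because $\C$ is algebraically closed), so $(\C^*)^{\otimes 3}$ and all of its quotients, in particular $K_3^M(\C)$, are divisible. The image of $K_3^M(\C)$ in $K_3(\C)$ is therefore a divisible subgroup, and divisible subgroups of abelian groups are always injective and hence split off as direct summands. This immediately produces a decomposition $K_3(\C)\cong \Image(K_3^M(\C))\oplus K_3^{\ind}(\C)$.

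For the isomorphism $K_3^{\ind}(\C)\cong H_3(\SL(2,\C))$, I would analyze the Hurewicz map $H\colon K_3(\C)\to H_3(\GL(\C))$ in two steps. First, use an $\SL$-version of Theorem~\ref{Suslinstability} to identify $H_3(\SL(2,\C))$ with $H_3(\SL(\C))$, then pass to $H_3(\GL(\C))$ via the Hochschild--Serre spectral sequence for $1\to \SL(\C)\to \GL(\C)\xrightarrow{\det}\C^*\to 1$; divisibility of $\C^*$ forces the relevant $\Tor$ terms to vanish and yields a clean decomposition $H_3(\GL(\C))\cong H_3(\SL(\C))\oplus (\text{determinant piece involving }\wedge^3\C^*)$. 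Second, by multiplicativity of the Hurewicz map and the description of $K_3^M$ as a quotient of $(\C^*)^{\otimes 3}$, the image $H(K_3^M(\C))$ lies entirely in the determinant summand. Combining with the splitting from the previous step, $H$ restricts to a map $K_3^{\ind}(\C)\to H_3(\SL(\C))\cong H_3(\SL(2,\C))$, which I would then verify is an isomorphism.

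The hardest step is this final verification. Surjectivity amounts to a generation result for $H_3(\SL(\C))$ by Hurewicz images, while injectivity requires that the Milnor piece exhausts the entire Hurewicz kernel in degree $3$; both ultimately rest on the divisibility of $\C^*$ and Suslin-type arguments, but the latter is delicate. A secondary technical issue is the $\SL$-stability upgrade of Theorem~\ref{Suslinstability}, which is a separate theorem in the literature and requires a comparison of the $\GL_n$ and $\SL_n$ stability ranges.
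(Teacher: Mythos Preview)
The paper does not prove this statement: it is quoted as a theorem of Sah, with a citation and a \qed\ immediately after the statement, and no argument given. There is therefore nothing in the paper to compare your proposal against.

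That said, your outline is a reasonable sketch of how the result is established in the literature. The direct-summand part via divisibility of $\C^*$ is correct and standard. For the second part you have correctly identified the two genuine inputs: an $\SL$--stability result (not just Theorem~\ref{Suslinstability} for $\GL$) and a precise analysis of how the Milnor piece sits inside $H_3(\GL(\C))$ under the Hurewicz map. You are also right that the delicate point is the final bijectivity claim; this is where Sah's actual work lies, and it does not reduce to the divisibility and spectral-sequence manipulations you describe---one needs, in effect, a computation of $H_3(\SL(2,\C))$ relative to the Bloch group (or an equivalent input) to close the argument. So your proposal is an accurate roadmap, but it is a roadmap to Sah's paper rather than a self-contained proof, which is consistent with the fact that the present paper simply cites the result.
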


\begin{thm}[Goette--Zickert~\cite{GZ}; see also Neumann~\cite{Neumann}]\label{H3eqBhat}
There is a canonical isomorphism $H_3(\SL(2,\C))\cong\widehat\B(\C)$. Under this isomorphism, $\hat c_2$ corresponds to the map $R$ in \eqref{regulator}.\qed
\end{thm}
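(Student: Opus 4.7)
The strategy is to exhibit both sides as the homology of explicit chain complexes built from configurations, and then produce a chain-level map realising the isomorphism. On the $\SL(2,\C)$ side, I would use a configuration complex: let $C_n$ be the free abelian group on ordered $(n+1)$-tuples of vectors in $\C^2\setminus\{0\}$ in general position (no two proportional), with the obvious simplicial differential. The group $\SL(2,\C)$ acts freely on this generic locus, and the complex is acyclic in positive degrees by a standard ``join with a generic vector'' contracting homotopy, so $C_*\otimes_{\SL(2,\C)}\Z$ computes $H_*(\SL(2,\C))$. By Suslin stability (Theorem~\ref{Suslinstability}) and Sah's theorem (Theorem~\ref{Sahthm}), only degree $3$ matters, so the target is the homology at position~$3$ of
\[
C_4\otimes_{\SL(2,\C)}\Z\xrightarrow{\partial}C_3\otimes_{\SL(2,\C)}\Z\xrightarrow{\partial}C_2\otimes_{\SL(2,\C)}\Z.
\]

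The central construction is a ``lifted cross-ratio'' map $\Psi\colon C_3\otimes_{\SL(2,\C)}\Z\to\Z[\widehat\C]$. Given an ordered $4$-tuple of vectors $(v_0,v_1,v_2,v_3)$, the six determinants $\Delta_{ij}=\det(v_i,v_j)\in\C^*$ are $\SL(2,\C)$-invariant, and the usual cross-ratio $z=\Delta_{03}\Delta_{12}/\Delta_{02}\Delta_{13}$ and $1-z=\Delta_{01}\Delta_{23}/\Delta_{02}\Delta_{13}$ factor through these determinants. Choosing any logarithms $\log\Delta_{ij}$ produces $w_0=\log\Delta_{03}+\log\Delta_{12}-\log\Delta_{02}-\log\Delta_{13}$ and similarly $w_1$, giving a flattening $(w_0,w_1)\in\widehat\C$; a different choice of logarithms shifts the result by an element of the image of $\widehat\rho$, so $\Psi$ is well defined modulo five term relations, i.e.\ into $\widehat\Pre(\C)$. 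I would then check that $\Psi\circ\partial$ lands in the image of $\widehat\rho$ by matching the $\SL(2,\C)$--orbit of a $5$--tuple with the lifted five term relation $\widehat{\FT}$, and that the image of a cycle lies in $\Ker(\widehat\nu)$ because $w_0\wedge w_1$ telescopes on alternating sums.

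To see that $\Psi$ induces an isomorphism, I would combine the classical Dupont--Sah isomorphism $H_3(\PSL(2,\C))\otimes\Z[\tfrac12]\cong\B(\C)\otimes\Z[\tfrac12]$ (obtained by the analogous construction on $\C P^1$) with the projection $\pi\colon\widehat\B(\C)\to\B(\C)$ from Theorem~\ref{bigdiag}. The map $\Psi$ fits in a commutative diagram with the Dupont--Sah map; the kernel of $\pi$ is $\mu_\C$, and on the $\SL(2,\C)$ side this is exactly the contribution of the central $\mu_\C\subset\SL(2,\C)$. A five-lemma / diagram-chasing argument against the rows of Theorem~\ref{bigdiag} then upgrades the $\PSL$ statement to the $\SL$ statement. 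The main obstacle here is controlling the mod-torsion and $2$-torsion issues: one must show that the indeterminacy in choosing logarithms is exactly killed by five term relations, and that no extraneous relations arise from exceptional configurations (degenerate tuples), which is where the careful choice of the \emph{more} extended Bloch group of Neumann becomes essential.

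For the regulator statement, I would evaluate $\hat c_2$ on a cycle $\sum n_i[v_0^i,v_1^i,v_2^i,v_3^i]$ by pulling back the universal Cheeger--Chern--Simons $2$--form along a classifying map. Choosing a piecewise-smooth section $\SL(2,\C)\to E\SL(2,\C)$ adapted to the flattening data, this pullback reduces on each simplex to an integral of $\tfrac12\,d\log z\wedge d\log(1-z)$ over a specific path in $\widehat\C$, which by Zagier's computation~\eqref{Zagierregulator} equals $F(w_1)+w_0w_1/2-\pi^2/6$ modulo $4\pi^2$. Matching this with formula~\eqref{regulator} and checking agreement on one explicit generator (for example the class of an ideal regular tetrahedron, where the Chern--Simons value is classical) pins down the constant and establishes that $\hat c_2=R$ under the isomorphism.
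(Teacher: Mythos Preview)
The paper does not prove Theorem~\ref{H3eqBhat}; it is quoted from Goette--Zickert~\cite{GZ} (and Neumann~\cite{Neumann}) and closed with a \qed. So there is no ``paper's own proof'' to compare against. What the paper \emph{does} do, in Section~\ref{FlatSection} and Section~\ref{idealcochainSection}, is reconstruct the chain-level map you call $\Psi$: see \eqref{defofGamma}, \eqref{cijflat} and \eqref{giflat}, where your lifted cross-ratio via $\log\det(v_i,v_j)$ is exactly the map $\widehat\lambda=\widehat\sigma\circ\Gamma$. The verification that boundaries go to lifted five term relations and that cycles land in $\Ker(\widehat\nu)$ is Lemma~\ref{I3Bhat}; independence of the choice of logarithms is Corollary~\ref{sameelement}, which in turn rests on Neumann's cycle relation. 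So your construction of the map is correct and coincides with the paper's.

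Your argument for \emph{bijectivity}, however, has a genuine gap. You write that ``the kernel of $\pi$ is $\mu_\C$, and on the $\SL(2,\C)$ side this is exactly the contribution of the central $\mu_\C\subset\SL(2,\C)$.'' But $\mu_\C$ is not central in $\SL(2,\C)$; only $\{\pm I\}$ is. The torsion in $H_3(\SL(2,\C))$ comes from the \emph{diagonal} embedding $x\mapsto\bigl(\begin{smallmatrix}x&0\\0&x^{-1}\end{smallmatrix}\bigr)$ (Theorem~\ref{torsiondiag}), and showing that $\widehat\lambda$ carries this copy of $\mu_\C$ isomorphically onto the kernel $\mu_\C=\Ker(\pi)\subset\widehat\B(\C)$ is a nontrivial computation, not a formality about centers. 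In Goette--Zickert this is handled by an explicit calculation with the map $\chi$ and the regulator; without it your five-lemma step does not close. Your regulator paragraph is also too loose to stand as a proof: $\hat c_2$ is a degree-$3$ Cheeger--Chern--Simons class, not a $2$-form, and the passage from the simplicial model of $\hat c_2$ to the dilogarithmic formula requires either Dupont's explicit cocycle or Neumann's flattening argument, neither of which is the ``pullback of a $2$-form'' you describe.
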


\begin{thm}[Dupont--Sah~\cite{DupontSah}]\label{torsiondiag}
The diagonal map $x\mapsto \bigl(\begin{smallmatrix}x&0\\0&x^{-1}\end{smallmatrix}\bigr)$ induces
an injection $H_3(\mu_\C)\to H_3(\SL(2,\C))$ onto the torsion subgroup of $H_3(\SL(2,\C))$.\qed
\end{thm}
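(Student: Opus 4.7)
My plan is to break the proof into three steps: computing $H_3(\mu_\C)$, locating the image in $H_3(\SL(2,\C))$ inside the torsion subgroup, and showing it surjects onto the torsion.

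The first step is a classical computation. Writing $\mu_\C = \varinjlim_n \mu_n$ and using that group homology commutes with filtered colimits, together with $H_3(\mu_n)\cong \Z/n\Z$ compatibly under inclusions, gives $H_3(\mu_\C)\cong \mu_\C\cong \Q/\Z$.

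For the second step, by Theorem \ref{Sahthm}, $H_3(\SL(2,\C))\cong K_3^{\ind}(\C)$, and Suslin's sequence \eqref{Suslinexact} presents this as an extension of $\B(\C)$ by $\widetilde{\mu_\C}$. Since $\B(\C)$ is uniquely divisible for algebraically closed fields (a theorem of Suslin), the torsion subgroup of $H_3(\SL(2,\C))$ is precisely $\widetilde{\mu_\C}\cong \Q/\Z$. The image of the diagonal map, being torsion, must lie inside $\widetilde{\mu_\C}$.

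The third step --- showing that the induced map $H_3(\mu_\C)\to \widetilde{\mu_\C}$ is an isomorphism --- is the main obstacle. Both sides are abstractly $\Q/\Z$, so it suffices to verify nonvanishing on each $\mu_n$. The diagonal embedding factors through the maximal torus $T\cong \C^*$, and a Hochschild-Serre argument for $1\to \mu_\C\to \C^*\to \C^*/\mu_\C\to 1$ (using that the uniquely divisible quotient has torsion-free homology) identifies the torsion subgroup of $H_3(T)$ with $H_3(\mu_\C)$. The hard part is to verify that this torsion survives in $H_3(\SL(2,\C))$. One route, essentially the original Dupont-Sah argument, is a spectral sequence on the normalizer $N(T)=T\rtimes W$ with $W=\Z/2$ acting by inversion on $T$, combined with a careful analysis of the $H_*$-contributions from the various parabolic and abelian subgroups (with Suslin stability, Theorem \ref{Suslinstability}, used to pass from the unstable to the stable range). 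An alternative, given the machinery assembled in this paper, is to translate to the extended Bloch group via Theorem \ref{H3eqBhat}: the diagonal embedding of $\mu_\C$ should correspond to the map $\chi$ of Theorem \ref{bigdiag}, which the diagram there already exhibits as an injection of $\mu_\C$ onto the kernel of $\widehat\B(\C)\to \B(\C)$, and that kernel is the torsion subgroup by unique divisibility of $\B(\C)$. In either approach, the substantive content is the explicit identification of torsion cycles from homology of abelian subgroups with classes in $H_3(\SL(2,\C))$ and the verification that they are nonzero.
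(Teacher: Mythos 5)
First, note that the paper does not prove this statement at all: it is quoted verbatim from Dupont--Sah, and the \qed marks it as an external input. So the only question is whether your argument would stand on its own, and it does not quite.

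Your Steps 1 and 2 are fine as reductions: $H_3(\mu_\C)\cong\Q/\Z$ by passage to the colimit over $\mu_n$, and the torsion subgroup of $H_3(\SL(2,\C))\cong K_3^{\ind}(\C)$ is $\widetilde{\mu_\C}\cong\Q/\Z$ by Theorem~\ref{Sahthm}, the sequence \eqref{Suslinexact}, and unique divisibility of $\B(\C)$; and you are right that an injective endomorphism of $\Q/\Z$ is automatically surjective, so everything reduces to showing the composite $H_3(\mu_p)\to H_3(\mu_\C)\to H_3(\SL(2,\C))$ is nonzero for every prime $p$. But that nonvanishing \emph{is} the theorem --- all the rest is formal bookkeeping --- and you do not establish it. Your Route A is a pointer to the original Dupont--Sah spectral-sequence analysis of $N(T)=T\rtimes W$ without carrying out any of it; in particular the decisive point, that the torsion classes of $H_3(T)$ are not killed upon passing to $H_3(\SL(2,\C))$, is exactly what such an analysis must deliver and is left unverified. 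Your Route B is worse than incomplete: it is circular. The isomorphism $H_3(\SL(2,\C))\cong\widehat\B(\C)$ of Theorem~\ref{H3eqBhat}, and the identification in Theorem~\ref{bigdiag} of $\chi(\mu_\C)$ with the kernel of $\widehat\B(\C)\to\B(\C)$, are proved in Neumann and Goette--Zickert by a five-lemma argument against the Bloch--Wigner sequence, whose left-hand term $\Q/\Z$ is identified with the image of the diagonal torus precisely by the Dupont--Sah theorem you are trying to prove. Moreover the claim that the diagonal embedding ``should correspond to'' $\chi$ is itself an assertion requiring proof (it is essentially Lemma~\ref{torsionidea} of this paper, which again invokes Theorem~\ref{torsiondiag}). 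In short: the reduction is correct, but the substantive content of the theorem is acknowledged rather than proved, so the attempt has a genuine gap at its central step.
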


\section{The extended Bloch group of an extension}\label{BhatE}
Let $F$ be a field and let 
\begin{equation*}
\xymatrix{E\colon 0\ar[r]&{\Z}\ar[r]^-\iota &E\ar[r]^-\pi &F^*\ar[r]& 0}
\end{equation*} be an extension of $F^*$ by $\Z$. We stress that the letter $E$ is used both to denote the extension and the middle group. As we shall see, most of the results in Neumann~\cite{Neumann} and Goette--Zickert \cite{GZ} can be formulated in this purely algebraic setup. 


\begin{defn} The set of \emph{(algebraic) flattenings} is the set
\[\widehat F_E=\big\{(e,f)\in E\times E\bigm\vert \pi(e)+\pi(f)=1\in F\big\}.\]
The map $(e,f)\mapsto \pi(e)$ induces a surjection $\pi\colon \widehat{F}_E\to
F\setminus\{0,1\}$, and we say that $(e,f)$ is a \emph{flattening} of $\pi(e)$.
\end{defn}
Recall the set of five term relations \[\FT=\big\{\big(x,y,\frac{y}{x},\frac{1-x^{-1}}{1-y^{-1}},\frac{1-x}{1-y}\big)\bigm\vert
x\neq y \in F\setminus\{0,1\}\big\}.\]
\begin{defn}\label{fiveeqdef}
The set of \emph{lifted five term relations} $\widehat{\FT}_E\subset (\widehat
F_E)^5$ is the set of tuples of flattenings 
$\big((e_0,f_0),\dots,(e_4,f_4)\big)$ satisfying 
\begin{gather}\label{fiveeq}
\begin{aligned}
e_2&=e_1-e_0\\e_3&=e_1-e_0-f_1+f_0\\f_3&=f_2-f_1\\
e_4&=f_0-f_1\\f_4&=f_2-f_1+e_0.
\end{aligned}
\end{gather}
\end{defn}
If $\big((e_0,f_0),\dots,(e_4,f_4)\big)\in\widehat{\FT}_E$, where $(e_i,f_i)$ is a flattening of $x_i\in F\setminus\{0,1\}$, then \eqref{fiveeq} implies that 
\[x_2=\frac{x_1}{x_0},\quad x_3=\frac{x_1}{x_0}\frac{(1-x_0)}{(1-x_1)}=\frac{1-x_0^{-1}}{1-x_1^{-1}},\quad x_4=\frac{1-x_0}{1-x_1}.\]
Hence, a lifted five term relation is indeed a lift of a five term relation. On the other hand, if $(x_0,\dots,x_4)\in\FT$ it is not difficult to check that there exist flattenings $(e_i,f_i)$ satisfying~\eqref{fiveeq}.
Hence, the map $\pi\colon\widehat{\FT}_E\to \FT$ is surjective. 

Consider the complex
\begin{equation}\label{ExtBlochSuslin}
\xymatrix{{\Z[\widehat{\FT}_E]}\ar[r]^-{\widehat\rho}&{\Z[\widehat
F_E]}\ar[r]^-{\widehat\nu}&\wedge^2(E)},
\end{equation}
with maps defined by 
\begin{gather*}
\widehat\rho\big((e_0,f_0),\dots,(e_4,f_4)\big)=\sum_{i=0}^4(-1)^i(e_i,f_i),\\
\widehat\nu(e,f)=e\wedge f.
\end{gather*}

\begin{lemma}The complex \eqref{ExtBlochSuslin} is a chain complex, 
i.e.~$\widehat{\nu}\circ\widehat{\rho}=0$. 
\end{lemma}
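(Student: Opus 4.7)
The plan is to verify $\widehat\nu\circ\widehat\rho=0$ by a direct bilinear expansion in $\wedge^2(E)$, using only the defining relations \eqref{fiveeq} and the alternating identity $a\wedge b=-b\wedge a$. I would not try to be clever; this is a purely formal identity in the free abelian group $E$, and the result must follow from the relations alone (otherwise the theory would not get off the ground).

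Concretely, I would start from
\[\widehat\nu\widehat\rho\bigl((e_0,f_0),\dots,(e_4,f_4)\bigr)=\sum_{i=0}^{4}(-1)^i\,e_i\wedge f_i\]
and substitute the formulas for $(e_2,f_2)$, $(e_3,f_3)$, $(e_4,f_4)$ from Definition~\ref{fiveeqdef} (note that $f_2$ itself is not constrained, so it should be carried as an independent variable). After this substitution every term is a $\wedge$-product of elements from the set $\{e_0,e_1,f_0,f_1,f_2\}$, so one only needs to collect coefficients in front of each basis element of the form $x\wedge y$ for $x,y$ in this set.

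The main point of caution is that in the paper's convention $a\wedge a$ is not necessarily zero; only $2(a\wedge a)=0$. So I would not silently discard self-wedges. In fact the terms $f_1\wedge f_1$ appear once with sign $-$ from $-e_3\wedge f_3$ and once with sign $+$ from $e_4\wedge f_4$, and they cancel on the nose. The same kind of pairwise cancellation handles every mixed term $e_i\wedge f_j$, $f_i\wedge f_j$, with the only surviving contributions being $e_0\wedge f_0$ from the $i=0$ summand balanced against $f_0\wedge e_0=-e_0\wedge f_0$ coming out of the expansion of $e_4\wedge f_4$, and similarly $-e_0\wedge f_1$ balanced by $-f_1\wedge e_0=e_0\wedge f_1$.

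So the real work is bookkeeping: write the expansion, group by the $\binom{5}{2}+5$ possible $\wedge$-monomials in $\{e_0,e_1,f_0,f_1,f_2\}$, and check the coefficient of each is zero. I expect the hardest step is simply psychological — resisting the temptation to set $a\wedge a=0$ and keeping signs straight while swapping factors via $a\wedge b=-b\wedge a$. No input from $\iota$, $\pi$, or the multiplicative structure of $F$ is needed; the lemma is a statement about any extension $E$ of an abelian group, and the proof reduces to the same five-term calculation that underlies the classical Bloch complex \eqref{BlochSuslin}, lifted one level up along $\pi\colon \widehat{\FT}_E\to\FT$.
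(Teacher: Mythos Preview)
Your approach is correct and essentially identical to the paper's: both substitute the relations \eqref{fiveeq} into $\sum_{i=0}^4(-1)^i e_i\wedge f_i$ and expand bilinearly in $\wedge^2(E)$, with the free variables being $e_0,e_1,f_0,f_1,f_2$. The paper is slightly more streamlined, collapsing the expansion directly to $e_0\wedge(f_0-f_1)+(f_0-f_1)\wedge e_0=0$ rather than tabulating all monomials, but the content is the same and your caution about not killing $a\wedge a$ is well-placed.
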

\begin{proof}
Let $\alpha=\big((e_0,f_0),\dots,(e_4,f_4)\big)\in \widehat{\FT}_E$.
Using \eqref{fiveeq} we have
\begin{align*}
\widehat\nu\circ \widehat\rho(\alpha)=&
\sum_{i=0}^4(-1)^ie_i\wedge f_i=
e_0\wedge f_0-e_1\wedge f_1+(e_1-e_0)\wedge
f_2\\&\quad-(e_1-e_0-f_1+f_0)\wedge (f_2-f_1)+(f_0-f_1)\wedge
(f_2-f_1+e_0)\\
=&e_0\wedge(f_0-f_1)+(f_0-f_1)\wedge e_0=0\in\wedge^2(E).\qedhere
\end{align*}
\end{proof}

\begin{defn} 
The \emph{extended pre-Bloch
group} $\widehat\Pre_E(F)$ is the quotient $\Z[\widehat F_E]/\Image(\widehat\rho)$. The \emph{extended Bloch group} $\widehat\B_E(F)$ is the quotient $\Ker(\widehat \nu)/\Image(\widehat\rho)$. 
\end{defn}

\begin{ex}\label{extension}
The extended Bloch group $\widehat\B(\C)$ is the extended Bloch group associated to the extension
\begin{equation}
\xymatrix{0\ar[r]&{\Z}\ar[r]^-{2\pi i}&{\C}\ar[r]^-{\exp}&{\C}^*\ar[r]&0.}
\end{equation}
\end{ex}

The extended groups fit together with the classical groups in a diagram
\[\xymatrix{{\widehat\B_E(F)}\ar[r]\ar[d]&{\widehat\Pre_E(F)}\ar[d]\ar[r]^-{\widehat\nu}&\wedge^2(E)\ar[d]\\{\B(F)}\ar[r]&{\Pre(F)}\ar[r]^-\nu&\wedge^2(F^*),}\]
where the vertical maps are surjections. 

\subsection{Relations in the extended Bloch group}
We now derive some relations in $\widehat{\Pre}_E(F)$. We encourage the reader to compare with Neumann~\cite{Neumann} and
Goette--Zickert~\cite{GZ} where similar relations are derived in $\widehat{\Pre}(\C)$ using analytic
continuation.

In the following we will regard $\Z$ as a subgroup of $E$. Consider the set 
\begin{multline} V=\big\{\big((p_0,q_0),(p_1,q_1),(p_1-p_0,q_2),(p_1-p_0-q_1+q_0,q_2-q_1),\\
(q_0-q_1,q_2-q_1+p_0)\big)\bigm \vert p_0,p_1,q_0,q_1,q_2\in \Z\big\}\subset (\Z\times \Z)^5,\end{multline}
also considered by Neumann~\cite[Definition 2.2]{Neumann}.

By \eqref{fiveeq} it follows that componentwise addition gives rise to an action
\[+\colon\widehat{\FT}_E\times V\to \widehat{\FT}_E, \quad (\alpha,v)\mapsto \alpha + v.\]
 

\begin{lemma}\label{GZ3.3} Let $q,q',\bar q,\bar q'$ be integers satisfying $q-q'=\bar q-\bar q'$.
For each flattening $(e,f)\in \widehat F_E$ we have
\begin{equation}
(e,f+q)-(e,f+q')=(e,f+\bar q)-(e,f+\bar q')\in \widehat{\Pre}_E(F).
\end{equation} 
\end{lemma}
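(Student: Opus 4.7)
My approach is to exploit the $V$-action on $\widehat{\FT}_E$ to extract, for a prescribed flattening $(e,f)$, enough relations in $\widehat{\Pre}_E(F)$ to show that the map $n\mapsto[(e,f+n)]-[(e,f)]$ is additive in $n$; the lemma is then an immediate algebraic consequence.

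First I would realize $(e,f)$ as the middle component of a lifted five-term relation. Pick $e_0\in E$ with $x_0:=\pi(e_0)\in F\setminus\{0,1,\pi(e)^{-1}\}$ (nonempty whenever $|F|\geq 4$), set $e_1:=e_0+e$, choose any $f_0,f_1\in E$ with $\pi(f_i)=1-\pi(e_i)$, take $f_2:=f$, and let $e_3,f_3,e_4,f_4$ be forced by \eqref{fiveeq}. This produces a lifted five-term relation
\[\alpha=\bigl((e_0,f_0),(e_1,f_1),(e,f),(e_3,f_3),(e_4,f_4)\bigr)\in\widehat{\FT}_E.\]

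Next, for each $(a,b)\in\Z^2$ the tuple
\[v_{a,b}:=\bigl((0,a),(0,a),(0,b),(0,b-a),(0,b-a)\bigr)\]
lies in $V$ (corresponding to $p_0=p_1=0$, $q_0=q_1=a$, $q_2=b$ in the parametrization of $V$), so $\alpha+v_{a,b}\in\widehat{\FT}_E$ and $\widehat\rho(\alpha+v_{a,b})-\widehat\rho(\alpha)=0$ in $\widehat{\Pre}_E(F)$. Writing $D_i(c):=[(e_i,f_i+c)]-[(e_i,f_i)]$, this yields the identity
\[D_0(a)-D_1(a)+D_2(b)-D_3(b-a)+D_4(b-a)=0\quad\text{in }\widehat{\Pre}_E(F).\]
Specializing at $a=0$ gives $D_3(b)=D_2(b)+D_4(b)$, and specializing at $b=a$ gives $D_1(a)=D_0(a)+D_2(a)$. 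Substituting both back into the general identity collapses everything except the $D_2$ terms, leaving $D_2(b)=D_2(a)+D_2(b-a)$ for all $a,b\in\Z$; that is, $D_2\colon\Z\to\widehat{\Pre}_E(F)$ is a group homomorphism.

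The lemma now follows immediately: setting $n:=q-q'=\bar q-\bar q'$ and using additivity,
\[[(e,f+q)]-[(e,f+q')]=D_2(q)-D_2(q')=D_2(n)=D_2(\bar q)-D_2(\bar q')=[(e,f+\bar q)]-[(e,f+\bar q')].\]
The only nontrivial step is the realization in the first paragraph, which requires at least four elements in $F$; the small-field pathologies $F=\F_2,\F_3$ are either vacuous or must be handled separately. Once $\alpha$ is in place, the rest is purely formal manipulation with the combinatorics of $V$.
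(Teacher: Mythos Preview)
Your proof is correct and rests on the same mechanism as the paper---exploiting the $V$-action on $\widehat{\FT}_E$---but the execution differs. The paper places $(e,f)$ at position~$0$ of a lifted five-term relation, applies the one-parameter family $v_r=\bigl((0,r),(0,r),(0,r),(0,0),(0,0)\bigr)$ (your $v_{r,r}$), then shifts $\alpha$ by a second $V$-element to produce two three-term relations whose difference is exactly the claimed identity. You instead place $(e,f)$ at position~$2$, use the two-parameter family $v_{a,b}$, and prove the stronger intermediate statement that $n\mapsto[(e,f+n)]-[(e,f)]$ is a homomorphism; the lemma then drops out. That additivity is precisely what the paper derives later as equation~\eqref{chi1} in the proof of Theorem~\ref{preblochrel}, so your route effectively folds two of the paper's steps into one. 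Both arguments need $|F|\ge4$ to realize $(e,f)$ inside some $\alpha\in\widehat{\FT}_E$; you are right to flag this, and the paper leaves it implicit.
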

\begin{proof}
Let $\alpha=\big((e_0,f_0),\dots,(e_4,f_4)\big)\in \widehat{\FT}_E$. For each integer $r$, consider the element $v_r\in V$ given by
\[v_r=\big((0,r),(0,r),(0,r),(0,0),(0,0)\big).\]
The relation $\widehat\rho(\alpha+v_q)-\widehat\rho(\alpha+v_{q'})=0\in \widehat\Pre_E(F)$
can be written as
\begin{multline}\label{V0eq}(e_0,f_0+q)-(e_0,f_0+q')-\big((e_1,f_1+q)-(e_1,f_1+q')\big)\\
+(e_2,f_2+q)-(e_2,f_2+q')=0\in \widehat{\Pre}_E(F).\end{multline}Let $\beta=\alpha+\big((0,0),(0,s),(0,s),(-s,0),(-s,0)\big)$, where $s=q-\bar q=q'-\bar q'$. Then $\beta\in \widehat{\FT}_E$, and the relation $\widehat\rho(\beta+v_{\bar q})-\widehat\rho(\beta+v_{\bar q'})=0\in \widehat \Pre_E(F)$ becomes
\begin{multline}\label{V0eq2}(e_0,f_0+\bar q)-(e_0,f_0+\bar q')-\big((e_1,f_1+q)-(e_1,f_1+q')\big)\\
+(e_2,f_2+q)-(e_2,f_2+q')=0\in \widehat{\Pre}_E(F).\end{multline}
The result now follows by subtracting \eqref{V0eq2} from \eqref{V0eq}.
\end{proof}

\begin{cor}\label{chidefn} Let $e\in E\setminus \Z$. The element
\[(e,f+1)-(e,f)\in\widehat\Pre_E(F)\]
is independent of $f$ whenever $(e,f)$ is in $\widehat F_E$.\qed
\end{cor}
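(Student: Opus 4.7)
The plan is to reduce the statement to a direct application of Lemma~\ref{GZ3.3}. First I would observe that any two flattenings $(e, f)$ and $(e, f')$ sharing the same first coordinate must satisfy $\pi(f) = 1 - \pi(e) = \pi(f')$, so their difference lies in $\ker(\pi) = \iota(\Z)$. Writing $f' = f + n$ for some integer $n$, the content of the corollary becomes the identity $(e, f+1) - (e, f) = (e, f+n+1) - (e, f+n)$ in $\widehat{\Pre}_E(F)$.

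Next I would invoke Lemma~\ref{GZ3.3} directly: applied to the flattening $(e, f)$ with $q = 1$, $q' = 0$, $\bar q = n+1$, $\bar q' = n$ (so that $q - q' = \bar q - \bar q' = 1$), the lemma yields exactly the required identity. The hypothesis $e \in E \setminus \Z$ enters only to guarantee that at least one flattening $(e, f)$ exists, since the equation $\pi(f) = 1 - \pi(e)$ has a solution $f \in E$ precisely when $1 - \pi(e) \in F^*$, i.e., when $\pi(e) \neq 1$, i.e., when $e \notin \iota(\Z)$.

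The main (and essentially only) obstacle has already been absorbed into Lemma~\ref{GZ3.3}, whose proof carries out the relevant five-term manipulation via the auxiliary parameter $s = q - \bar q$ in order to align two lifted five-term relations. Without that lemma in hand I would instead have to construct the two suitable elements of $\widehat{\FT}_E$ with compatible $V$-shifts and compare the resulting relations, which is precisely the argument executed there; given the lemma, the corollary is essentially a bookkeeping observation.
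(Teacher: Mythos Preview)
Your argument is correct and is exactly the approach the paper has in mind: the corollary is marked with a bare \qed precisely because it is the specialization of Lemma~\ref{GZ3.3} to $q=1$, $q'=0$, $\bar q=n+1$, $\bar q'=n$, after observing that any two flattenings with the same first coordinate differ in the second by an integer. Your remark on the role of the hypothesis $e\in E\setminus\Z$ is also on point.
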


Using Corollary~\ref{chidefn}, we can define a map 
\begin{equation}\label{chieq}\chi\colon E\setminus\Z\to \widehat\Pre_E(F),\quad e\mapsto (e,f+1)-(e,f).\end{equation} 
\begin{lemma}\label{chihomo}
Suppose $e,e'$ and $e+e'$ are elements in $E\setminus\Z$. We have
\begin{equation}
\chi(e)+\chi(e')=\chi(e+e').
\end{equation}
\end{lemma}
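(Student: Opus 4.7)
The plan is to deduce the identity from a single lifted five term relation $\alpha\in\widehat{\FT}_E$ whose first three entries are flattenings of $e$, $e+e'$, and $e'$, by comparing $\widehat\rho(\alpha)$ with $\widehat\rho(\alpha+v)$ for an integer shift $v\in V$ that only affects the first three slots. This is the algebraic analogue of the analytic continuation argument used in Neumann~\cite{Neumann} and Goette--Zickert~\cite{GZ}.

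To construct $\alpha$, I would take $e_0=e$, $e_1=e+e'$, so that $e_2:=e_1-e_0=e'$, and pick any lifts $f_0,f_1,f_2\in E$ with $\pi(f_i)=1-\pi(e_i)$ for $i=0,1,2$; these exist because the hypotheses $e,e',e+e'\notin\Z$ force $\pi(e_i)\in F\setminus\{0,1\}$. The remaining entries $e_3,e_4,f_3,f_4$ are then forced by~\eqref{fiveeq}. Writing $x_i=\pi(e_i)$, one checks in $F^*$ that $x_0\ne x_1$ (because $e'\notin\Z$) and that $\pi(f_j)=1-\pi(e_j)$ for $j=3,4$ from the identities $(1-x_2)/(1-x_1)=(x_0-x_1)/(x_0(1-x_1))$ and $(1-x_2)x_0/(1-x_1)=(x_0-x_1)/(1-x_1)$. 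Hence $\alpha\in\widehat{\FT}_E$.

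Now take the shift $v=\bigl((0,1),(0,1),(0,1),(0,0),(0,0)\bigr)\in V$. Inspection of the five defining equations in~\eqref{fiveeq} shows that the $+1$'s in $f_0,f_1,f_2$ cancel in $f_3=f_2-f_1$, $e_4=f_0-f_1$, and $f_4=f_2-f_1+e_0$, and the $e_i$'s are unchanged, so $\alpha+v\in\widehat{\FT}_E$ as well. Both $\widehat\rho(\alpha)$ and $\widehat\rho(\alpha+v)$ therefore vanish in $\widehat\Pre_E(F)$, and their difference is
\begin{equation*}
\bigl((e_0,f_0+1)-(e_0,f_0)\bigr)-\bigl((e_1,f_1+1)-(e_1,f_1)\bigr)+\bigl((e_2,f_2+1)-(e_2,f_2)\bigr)=0,
\end{equation*}
which by Corollary~\ref{chidefn} reads $\chi(e)-\chi(e+e')+\chi(e')=0$. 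There is no substantial obstacle here: the only points to check are the existence of the realization $\alpha$ and the invariance of~\eqref{fiveeq} under $v$, both of which are immediate from the algebraic setup.
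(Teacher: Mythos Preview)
Your proof is correct and is essentially the paper's own argument made explicit. The paper's proof is a single sentence---``This follows from~\eqref{V0eq} after noting that $e_1=e_0+e_2$''---which unpacks to exactly what you wrote: equation~\eqref{V0eq} with $q=1$, $q'=0$ reads $\chi(e_0)-\chi(e_1)+\chi(e_2)=0$, and one realizes the given $e,e+e',e'$ as $e_0,e_1,e_2$ of some $\alpha\in\widehat{\FT}_E$ just as you describe. Your explicit construction of $\alpha$ (and the verification that $x_0\neq x_1$ and that the forced $(e_3,f_3),(e_4,f_4)$ are flattenings) fills in details the paper leaves to the reader, but the route is identical.
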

\begin{proof}
This follows from \eqref{V0eq} after noting that $e_1=e_0+e_2$.
\end{proof}

The following is elementary.
\begin{lemma}\label{exttohomo}
Let $G$ and $G'$ be groups and let $H$ be a subgroup of $G$ of index greater than $2$.
Suppose $\phi\colon G\setminus H\to G'$ is a map satisfying $\phi(g_1g_2)=\phi(g_1)\phi(g_2)$ whenever both sides are defined. Then $\phi$ extends uniquely to a homomorphism $\phi\colon G\to G'$. \qed
\end{lemma}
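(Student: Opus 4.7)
The plan is to extend $\phi$ to all of $G$ by choosing, for each $h\in H$, a decomposition $h=g_1g_2$ with $g_1,g_2\in G\setminus H$ and declaring $\phi(h):=\phi(g_1)\phi(g_2)$. Such decompositions always exist: fix any $g_1\in G\setminus H$ and set $g_2=g_1^{-1}h$; since $H$ is a subgroup and $g_1\notin H$, automatically $g_2\notin H$. Uniqueness of the extension is then immediate from the required multiplicativity, so the substance lies in well-definedness of this rule and in upgrading partial multiplicativity to a genuine homomorphism on all of $G\times G$.

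As a preliminary, I would establish that $\phi(k)\phi(k^{-1})=1$ for every $k\in G\setminus H$. Choose $m\in G\setminus H$ with $mk\in G\setminus H$; only the two left cosets $H$ and $k^{-1}H$ are forbidden, so the hypothesis $[G:H]\geq 3$ leaves at least one admissible coset for $m$. Applying the partial multiplicativity to $m=(mk)k^{-1}$ yields $\phi(m)=\phi(mk)\phi(k^{-1})=\phi(m)\phi(k)\phi(k^{-1})$, whence the claim.

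For well-definedness, suppose $h=g_1g_2=g_1'g_2'$ with all four factors outside $H$. I would split on whether $g_1$ and $g_1'$ lie in the same left coset of $H$. If they lie in different cosets, then $k:=g_1^{-1}g_1'$ is outside $H$, both $g_1k=g_1'$ and $k^{-1}g_2=g_2'$ are outside $H$, and two applications of partial multiplicativity together with $\phi(k)\phi(k^{-1})=1$ give $\phi(g_1')\phi(g_2')=\phi(g_1)\phi(g_2)$. If $g_1,g_1'$ lie in the \emph{same} coset, I would pick an auxiliary $g_1''\in G\setminus H$ from a third left coset (available since $[G:H]\geq 3$), form $g_2''=(g_1'')^{-1}h$, and apply the different-coset case twice.

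For the homomorphism property $\phi(xy)=\phi(x)\phi(y)$, the remaining cases are when at least one of $x,y$ lies in $H$. In each such case I would rewrite $xy$ as a product whose consecutive partial products all avoid $H$, possibly re-choosing the decomposition used to define $\phi(x)$ or $\phi(y)$ so that this chain condition is met; each rechoice again excludes at most two cosets and is legal by the index hypothesis. The main obstacle is the well-definedness step, since $H$ need not be normal and the coset bookkeeping is delicate; it is precisely the reduction through a third decomposition that forces the hypothesis $[G:H]>2$ rather than the weaker $H\neq G$.
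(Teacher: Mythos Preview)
The paper does not actually supply a proof of this lemma: it is stated as ``elementary'' and marked with \qed, so there is nothing to compare your argument against. Your proposal is a correct and standard way to fill in the details; the well-definedness step via a third coset and the reduction $\phi(k)\phi(k^{-1})=1$ are exactly what is needed, and the homomorphism verification goes through case by case as you indicate.

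One small slip: in the preliminary step you say the forbidden cosets for $m$ are the \emph{left} cosets $H$ and $k^{-1}H$, but the condition $mk\notin H$ is $m\notin Hk^{-1}$, a \emph{right} coset. This does not affect the argument, since the number of right cosets is also at least $3$ and $H$, $Hk^{-1}$ are distinct right cosets (as $k\notin H$), so an admissible $m$ still exists.
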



\begin{cor}\label{extofchi}
The map $\chi\colon E\setminus\Z\to \widehat\Pre_E(F)$ extends to a homomorphism defined on all of $E$.\qed
\end{cor}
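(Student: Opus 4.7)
The corollary is set up as a direct application of Lemma~\ref{exttohomo}, and my approach would be to simply verify its two hypotheses for $G = E$ (written additively), $H = \Z \subset E$, target group $G' = \widehat\Pre_E(F)$, and $\phi = \chi$.

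First I would check the index condition $[E : \Z] > 2$. Since the extension $0 \to \Z \to E \to F^* \to 0$ identifies $E/\Z$ with $F^*$, this amounts to requiring $|F^*| > 2$, which holds whenever $F$ has at least four elements. In particular, this is automatic for the number fields (and more generally the infinite fields) that the paper cares about, so I would note this assumption and not worry about the degenerate small-field cases.

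Second I would verify the partial homomorphism condition: whenever $e, e' \in E \setminus \Z$ and also $e + e' \in E \setminus \Z$, we need $\chi(e + e') = \chi(e) + \chi(e')$. But this is exactly the statement of Lemma~\ref{chihomo}, so nothing new has to be shown. Invoking Lemma~\ref{exttohomo} then produces a unique homomorphism $\chi \colon E \to \widehat\Pre_E(F)$ extending the original partial map, which is the conclusion of the corollary.

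I don't expect any real obstacle here: the setup has been engineered so that Lemma~\ref{chihomo} supplies the multiplicativity and Lemma~\ref{exttohomo} supplies the abstract extension principle. The only moving part is the index hypothesis, and for the classes of fields considered in the paper this is trivially satisfied. In essence, the entire content of the corollary is bundled into the previous two lemmas, and the proof is a one-line citation of each.
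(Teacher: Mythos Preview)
Your proposal is correct and matches the paper's intended argument: the corollary carries a \qed with no explicit proof precisely because it is meant to follow immediately from Lemma~\ref{chihomo} (the partial additivity of $\chi$) together with Lemma~\ref{exttohomo} (the abstract extension principle), with $G=E$, $H=\Z$, and the index condition $[E:\Z]=|F^*|>2$. Your observation that this silently excludes $\mathbb{F}_2$ and $\mathbb{F}_3$ is accurate and is a tacit assumption in the paper.
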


\begin{lemma}\label{N7.3}
For any two flattenings $(e,f),(g,h)\in \widehat F_E$ we have
\[(e,f)+(f,e)=(g,h)+(h,g)\in \widehat \B_E(F).\]
\end{lemma}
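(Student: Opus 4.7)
Set $\sigma(e,f):=(e,f)+(f,e)\in \Z[\widehat F_E]$. The first observation is that
\[\widehat\nu(\sigma(e,f))=e\wedge f+f\wedge e=0\in\wedge^2(E),\]
so the class $[\sigma(e,f)]\in\widehat\Pre_E(F)$ lies in $\widehat\B_E(F)$. The substance of the lemma is that this class is independent of the flattening $(e,f)$. The plan is to split this into two steps.

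\textbf{Step 1: invariance under changing the flattening of a fixed point.} Two flattenings of a common $x\in F\setminus\{0,1\}$ differ by an element $(p,q)\in\Z\times\Z$, so I need to show
\[\sigma(e+p,f+q)-\sigma(e,f)=\bigl[(e+p,f+q)-(e,f)\bigr]+\bigl[(f+q,e+p)-(f,e)\bigr]\]
vanishes in $\widehat\Pre_E(F)$. For the second coordinate shifts, Corollary~\ref{extofchi} gives $(e,f+q)-(e,f)=q\chi(e)$. I would derive an analogous homomorphism $\chi'$ governing shifts in the first coordinate by replaying the proof of Lemma~\ref{GZ3.3} with a different element of $V$, for instance $v'_r=\bigl((r,0),(0,0),(-r,0),(-r,0),(0,r)\bigr)$, which shifts only first coordinates in three of the slots. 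Applying $\chi$ and $\chi'$ to both brackets produces an expression in $\chi(e),\chi(f),\chi'(e),\chi'(f)$; the cancellation is forced by the flattening constraint $\pi(e)+\pi(f)=1$ together with the homomorphism properties of $\chi$ and $\chi'$.

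\textbf{Step 2: invariance across different points.} By Step 1, $\sigma$ descends to a well-defined map $\tilde\sigma\colon F\setminus\{0,1\}\to\widehat\B_E(F)$, and visibly $\tilde\sigma(1-x)=\tilde\sigma(x)$. To show $\tilde\sigma$ is constant, I would produce, for each lifted five-term relation $\alpha=\bigl((e_0,f_0),\ldots,(e_4,f_4)\bigr)\in\widehat\FT_E$, a companion lifted five-term relation $\alpha'$ obtained by composing the coordinate swap $(a,b)\mapsto(b,a)$ with an appropriate $S_5$-permutation reflecting the classical symmetries (such as $x\leftrightarrow 1-x$ and $x\leftrightarrow 1/x$) of the dilogarithm five-term relation. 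Because the constraints~\eqref{fiveeq} are not symmetric in $e$ and $f$, the naive swap $\bigl((f_i,e_i)\bigr)$ is not in $\widehat\FT_E$, but the $S_5$-reindexed swap is, after a mild translation by an element of $V$. Adding $\widehat\rho(\alpha)+\widehat\rho(\alpha')=0$ produces an identity of the form $\sum(-1)^i\sigma(e_i,f_i)\equiv 0$ in $\widehat\Pre_E(F)$, equivalently $\sum(-1)^i\tilde\sigma(x_i)=0$ in $\widehat\B_E(F)$. Combined with $\tilde\sigma(1-x)=\tilde\sigma(x)$ and the freedom to choose $(x_0,x_1)$ arbitrarily, this collection of identities forces $\tilde\sigma$ to be constant.

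The main obstacle is Step 2: identifying the precise $S_5$-reindexing and the accompanying $V$-translation that realize the coordinate swap of $\alpha$ as a bona fide element of $\widehat\FT_E$, and then showing that the resulting family of 5-term identities, together with the $x\leftrightarrow 1-x$ symmetry, is rigid enough to pin down $\tilde\sigma$ as a constant rather than merely a function factoring through some proper quotient of $\Pre(F)$. The asymmetry between the $e$- and $f$-coordinates in Definition~\ref{fiveeqdef} is the only conceptual source of difficulty; everything else is bookkeeping with the relations~\eqref{fiveeq}.
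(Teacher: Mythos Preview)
Your instinct to look for a symmetry of $\widehat{\FT}_E$ involving the coordinate swap is correct, but you are searching for the wrong kind of symmetry, and this makes your argument far more complicated than necessary.

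The paper's proof is one line: from \eqref{fiveeq} one checks directly that
\[
\bigl((e_0,f_0),(e_1,f_1),(e_2,f_2),(e_3,f_3),(e_4,f_4)\bigr)\in\widehat{\FT}_E
\iff
\bigl((f_1,e_1),(f_0,e_0),(e_4,f_4),(e_3,f_3),(e_2,f_2)\bigr)\in\widehat{\FT}_E,
\]
with \emph{no} $V$--translation needed. The point is that the coordinate swap is applied only at positions $0$ and $1$, while positions $2,3,4$ are merely permuted by $(2\;4)$. Subtracting the two relations then kills the last three terms identically and leaves
\[
(e_0,f_0)+(f_0,e_0)=(e_1,f_1)+(f_1,e_1)\in\widehat\Pre_E(F).
\]
Since any two flattenings over distinct points of $F\setminus\{0,1\}$ can occur as $(e_0,f_0)$ and $(e_1,f_1)$ in some lifted five-term relation, this gives constancy immediately. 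Your Step~1 is not needed at all.

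Your Step~2 seeks a \emph{full} coordinate swap composed with an $S_5$--permutation. Even if such a symmetry existed, adding the two relations would produce an identity of the form $\sum(-1)^i\sigma(e_i,f_i)=0$, and a nonzero constant does not satisfy this (the alternating sum of five equal terms is the term itself), so the identity cannot by itself force $\tilde\sigma$ to be constant. The decisive feature of the paper's partial swap is precisely that the unswapped terms cancel upon \emph{subtraction}, isolating $\sigma(e_0,f_0)-\sigma(e_1,f_1)$.

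A secondary issue: your Step~1 leans on a first-coordinate analogue $\chi'$ of $\chi$. In the paper, the first-coordinate shift formula \eqref{chi2} is derived \emph{using} Lemma~\ref{N7.3}, so you would have to establish $\chi'$ independently; your sketch via $v'_r$ does not isolate a clean two-term difference and the claimed cancellation ``forced by $\pi(e)+\pi(f)=1$'' is not substantiated.
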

\begin{proof}
It follows from \eqref{fiveeq} that 
$\big((e_0,f_0),(e_1,f_1),(e_2,f_2),(e_3,f_3),(e_4,f_4)\big)\in \widehat{\FT}_E$
if and only if $\big((f_1,e_1),(f_0,e_0),(e_4,f_4),(e_3,f_3),(e_2,f_2)\big)\in \widehat{\FT}_E.$
Subtracting the two relations in $\widehat{\Pre}_E(F)$ yields
\[(e_0,f_0)-(e_1,f_1)=(f_1,e_1)-(f_0,e_0)\in \widehat{\Pre}_E(F),\]
from which the claim follows. Since $e\wedge f+f\wedge e=0\in \wedge^2(E)$, the element lies in $\widehat\B_E(F)$.
\end{proof}

\begin{lemma}\label{kappadef} 
The homomorphism $\chi\colon E\to \widehat\Pre_E(F)$ takes $2\Z\subset E$ to $0$.
\end{lemma}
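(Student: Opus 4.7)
The strategy is to reduce to the single equation $2\chi(\iota(1))=0\in\widehat\Pre_E(F)$ and then to exploit Lemma \ref{N7.3} by summing two expressions for $\chi(\iota(1))$ obtained from complementary base flattenings. Since $\chi$ is a homomorphism by Corollary \ref{extofchi}, the claim that $\chi$ vanishes on $2\iota(\Z)\subset E$ reduces to the single identity $\chi(\iota(2))=2\chi(\iota(1))=0$. A one-line computation of $\widehat\nu$ on the defining expression for $\chi(\iota(1))$ yields $\iota(1)\wedge\iota(1)$, which is $2$-torsion, so $2\chi(\iota(1))$ automatically lies in $\widehat\B_E(F)\subseteq\widehat\Pre_E(F)$; it therefore suffices to prove the identity in the subgroup $\widehat\B_E(F)$, where Lemma \ref{N7.3} is available.

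The heart of the argument would be this. Fix any flattening $(e,f)\in\widehat F_E$; automatically $e,f\notin\iota(\Z)$, since $\pi(e)+\pi(f)=1$ forces $\pi(e),\pi(f)\neq 1$. Combining the extension property of Corollary \ref{extofchi} with the explicit formula of Corollary \ref{chidefn}, I would write $\chi(\iota(1))$ in two ways. First as $\chi(e+\iota(1))-\chi(e)$:
\[\chi(\iota(1))=(e+1,f+1)-(e+1,f)-(e,f+1)+(e,f),\]
and second, with the roles of $e$ and $f$ interchanged, as $\chi(f+\iota(1))-\chi(f)$:
\[\chi(\iota(1))=(f+1,e+1)-(f+1,e)-(f,e+1)+(f,e).\]
Adding these gives
\begin{multline*}
2\chi(\iota(1))=\bigl[(e+1,f+1)+(f+1,e+1)\bigr]-\bigl[(e+1,f)+(f,e+1)\bigr]\\
-\bigl[(e,f+1)+(f+1,e)\bigr]+\bigl[(e,f)+(f,e)\bigr],
\end{multline*}
in which each bracket has the shape $(a,b)+(b,a)$ for some flattening $(a,b)\in\widehat F_E$. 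By Lemma \ref{N7.3}, all four brackets equal a single element $S_0\in\widehat\B_E(F)$, so the alternating sum collapses to $S_0-S_0-S_0+S_0=0$.

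The only real obstacle is conceptual: spotting the symmetric-sum trick. Once one notices that Lemma \ref{N7.3} is precisely the identity needed for each of the four $(a,b)+(b,a)$ brackets arising after adding the two expansions, the proof reduces to bookkeeping. The remaining check — that $2\chi(\iota(1))$ actually lives in $\widehat\B_E(F)$, so that equality in the subgroup lifts to $\widehat\Pre_E(F)$ — follows immediately from the identity $2a\wedge a=0$ in $\wedge^2(E)$.
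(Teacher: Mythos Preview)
Your proof is correct and is essentially the same computation as the paper's, just organized slightly differently: the paper writes $\chi(1)=\chi(e+1)-\chi(e)$, applies Lemma~\ref{N7.3} termwise to swap each $(a,b)$ to $-(b,a)$ (the constants cancel by the alternating signs), and recognizes the result as $-\chi(f+1)+\chi(f)=-\chi(1)$; you instead add the two expansions and group into four $(a,b)+(b,a)$ brackets. One small point: your detour through $\widehat\B_E(F)$ is unnecessary, since the identity of Lemma~\ref{N7.3} is proved as an equality in $\widehat\Pre_E(F)$ (the ``$\in\widehat\B_E(F)$'' there only records where the common value lives), so the whole argument can stay in $\widehat\Pre_E(F)$.
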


\begin{proof}
Let $e\in E$ be any element not in $\Z$. 
The result follows from the computation
\begin{gather}\label{2chi0}
\begin{split}\chi(1)&=\chi(e+1)-\chi(e)\\&=(e+1,f+1)-(e+1,f)-(e,f+1)+(e,f)\\&=
-(f+1,e+1)+(f,e+1)+(f+1,e)-(f,e)\\&=-\chi(f+1)+\chi(f)\\&=-\chi(1),
\end{split}
\end{gather}
where the third equality follows from Lemma~\ref{N7.3}.
\end{proof}


\begin{thm}\label{preblochrel}
There is an exact sequence
\begin{equation}\label{kernelCmod4}
\xymatrix{{E/2\Z}\ar[r]^-\chi&{\widehat\Pre_E(F)}\ar[r]^-\pi&{\Pre(F)}\ar[r]&0.}
\end{equation}
\end{thm}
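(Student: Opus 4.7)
The plan is to prove the exact sequence $E/2\Z\xrightarrow{\chi}\widehat\Pre_E(F)\xrightarrow{\pi}\Pre(F)\to 0$ in three pieces. Surjectivity of $\pi$ is immediate, since every $z\in F\setminus\{0,1\}$ admits a flattening obtained by lifting $z$ and $1-z$ through $\pi\colon E\to F^*$. The inclusion $\Image\chi\subseteq\Ker\pi$ is the direct check $\pi((e,f+1)-(e,f))=[\pi(e)]-[\pi(e)]=0$, and $\chi$ descends to $E/2\Z$ by Lemma~\ref{kappadef}. The real content is therefore the reverse inclusion $\Ker\pi\subseteq\Image\chi$.

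Given $\alpha\in\Ker\pi$, my first move would be to pick any lift $\tilde\alpha\in\Z[\widehat F_E]$ and replace it by a better one. Since $\pi(\tilde\alpha)\in\Image\rho$, I can write $\pi(\tilde\alpha)$ as a $\Z$-combination of vectors $\rho(\tau)$ with $\tau\in\FT$. Using surjectivity of $\pi\colon\widehat{\FT}_E\to\FT$, I lift each such $\tau$ to some $\hat\tau\in\widehat{\FT}_E$ and subtract the corresponding $\widehat\rho(\hat\tau)$ from $\tilde\alpha$. The corrected representative (still called $\tilde\alpha$) satisfies $\pi(\tilde\alpha)=0$ already inside the free group $\Z[F\setminus\{0,1\}]$. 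Grouping its terms by base point $z$ and noting that the coefficients within each group sum to zero, $\tilde\alpha$ becomes a $\Z$-combination of differences $(e,f)-(e',f')$ of flattenings sharing a common $z\in F\setminus\{0,1\}$. It therefore suffices to show that every such difference lies in $\Image\chi$.

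For such a pair I would write $e'=e+p$ and $f'=f+q$ with $p,q\in\Z\subset E$, and split
\[(e+p,f+q)-(e,f)=\bigl[(e+p,f+q)-(e+p,f)\bigr]+\bigl[(e+p,f)-(e,f)\bigr].\]
The first bracket equals $q\chi(e+p)$ by iterating Corollary~\ref{chidefn}. For the second bracket I would invoke the intermediate identity $(a,b)-(c,d)=(d,c)-(b,a)$ in $\widehat\Pre_E(F)$ extracted inside the proof of Lemma~\ref{N7.3}, which holds whenever $(a,b)$ and $(c,d)$ are flattenings of distinct elements of $F\setminus\{0,1\}$. Applying it to the pairs $(e,f),(e_1,f_1)$ and $(e+p,f),(e_1,f_1)$ for any auxiliary flattening $(e_1,f_1)$ of some $x_1\in F\setminus\{0,1,\pi(e)\}$ and subtracting the two instances yields $(e+p,f)-(e,f)=-p\chi(f)$. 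Combining and using that $\chi$ is a homomorphism (Lemma~\ref{chihomo} and Corollary~\ref{extofchi}) produces
\[(e+p,f+q)-(e,f)=q\chi(e)+pq\chi(1)-p\chi(f)=\chi(qe-pf+pq)\in\Image\chi,\]
which finishes the proof. The step I expect to be the main obstacle is the horizontal identity $(e+p,f)-(e,f)=-p\chi(f)$: unlike its vertical counterpart it is not supplied by Lemma~\ref{GZ3.3}, and must be extracted via the swap trick above, which presupposes the existence of a third element of $F\setminus\{0,1\}$ to serve as $x_1$; very small fields would have to be treated by direct inspection.
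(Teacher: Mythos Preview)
Your proof is correct and follows essentially the same route as the paper: reduce to differences $(e+p,f+q)-(e,f)$ of flattenings over a common point (using that $\widehat{\FT}_E\to\FT$ is onto), handle the vertical shift via Lemma~\ref{GZ3.3}/Corollary~\ref{chidefn}, and handle the horizontal shift via the swap identity from Lemma~\ref{N7.3}. Two minor remarks: the paper applies the \emph{conclusion} of Lemma~\ref{N7.3}, namely $(e,f)+(f,e)=(g,h)+(h,g)$ for \emph{any} two flattenings, directly with $(g,h)=(e+p,f)$, which yields $(e+p,f)-(e,f)=(f,e)-(f,e+p)=-p\chi(f)$ without introducing an auxiliary third point, so your worry about very small fields evaporates; and the closed formula $\chi(qe-pf+pq)$ you derive is exactly what the paper records immediately afterwards as Lemma~\ref{Rformula}.
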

\begin{proof}
It is clear that $\pi\circ \chi=0$ and that $\pi$ is surjective. Since every five term relation lifts to a lifted five term relation, the kernel of $\pi$ must be generated by elements of the form $(e+p,f+q)-(e,f)$.
By Lemma~\ref{GZ3.3} we have,
\begin{multline}\label{chi1}(e,f+q)=(e,f+q-1)+(e,f+1)-(e,f)\\=q(e,f+1)-q(e,f)+(e,f)=q\chi(e)+(e,f),\end{multline}
and we see that $(e,f+q)-(e,f)$ is in $\Image(\chi)$.
Using Lemma~\ref{N7.3} we similarly obtain
\begin{equation}\label{chi2}(e+p,f)-(e,f)=-p\chi(f)\in\Image(\chi),\end{equation}
and the result follows.\end{proof}

\begin{remark}
We do not know if $\chi$ is injective, but we expect this to be the case. In the next section, we show that $\chi$ is injective if $F$ is a number field and $E$ is a primitive extension.
\end{remark}

\begin{remark}\label{CstarandEmod2}
If $E$ is the extension in Example~\ref{extension}, the exact sequence~\eqref{kernelCmod4} is equivalent to the corresponding exact sequence in Theorem~\ref{bigdiag} using the identification of $\C^*$ with $E/2\Z=\C/4\pi i\Z$ taking $z\in \C^*$ to $-2\log(z)\in \C/4\pi i\Z$. The restriction of the regulator~\eqref{regulator} to $\C/4\pi i\Z\subset \widehat \Pre(\C)$ is multiplication by~$-\pi i$. 
\end{remark}

\begin{lemma}\label{Rformula} The following equality holds in $\widehat\Pre_E(F)$.
\begin{equation*}(e+p,f+q)-(e,f)=\chi(qe-pf+pq).\end{equation*}
\end{lemma}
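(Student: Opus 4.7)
The plan is to split the difference $(e+p,f+q)-(e,f)$ through the intermediate flattening $(e+p,f)$ and then invoke the two formulas~\eqref{chi1} and~\eqref{chi2} established in the proof of Theorem~\ref{preblochrel}, together with the homomorphism property of $\chi$ from Corollary~\ref{extofchi}. The identity then reduces to a straightforward algebraic rearrangement, with $pq\chi(1)$ supplying the cross term.

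Concretely, I would write
\begin{equation*}
(e+p,f+q)-(e,f) = \bigl[(e+p,f+q)-(e+p,f)\bigr] + \bigl[(e+p,f)-(e,f)\bigr].
\end{equation*}
Applying~\eqref{chi1} with base flattening $(e+p,f)$ yields $q\chi(e+p)$ for the first bracket, while~\eqref{chi2} gives $-p\chi(f)$ for the second. Since $\chi\colon E\to \widehat\Pre_E(F)$ is a homomorphism, one has $q\chi(e+p)=\chi(qe)+\chi(pq)$ and $-p\chi(f)=\chi(-pf)$, and summing yields $\chi(qe-pf+pq)$, which is the desired identity.

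The one point that needs a word is that the insertion of the intermediate term requires $(e+p,f)$ itself to be a flattening, so that~\eqref{chi1} applies there; but this is automatic since $\pi(e+p)=\pi(e)$ and hence $\pi(e+p)+\pi(f)=1$. Beyond this sanity check, the argument is pure bookkeeping, so I do not anticipate any real obstacle — the lemma is essentially a clean package of the two elementary relations already used inside the proof of Theorem~\ref{preblochrel}.
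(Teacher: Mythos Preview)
Your argument is correct and is exactly the approach the paper takes: it simply says the formula is an easy consequence of \eqref{chi1} and \eqref{chi2}, and you have spelled out that computation cleanly, including the check that $(e+p,f)$ is a flattening.
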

\begin{proof}
This is an easy consequence of \eqref{chi1} and \eqref{chi2}.
\end{proof}

\subsection{Functoriality}
Let $F_1$ and $F_2$ be fields and let $E_1$ and $E_2$ be extensions of $F_1^*$ and $F_2^*$ by $\Z$. 
\begin{defn}A map $\Psi\colon E_1\to E_2$ of extensions is called a \emph{covering} if the base homomorphism $\Psi\colon F_1^*\to F_2^*$ extends to an embedding of $F_1$ in $F_2$. Two coverings are \emph{equivalent} if they cover the same embedding.
\end{defn}
A covering $\Psi\colon E_1\to E_2$
gives rise to a chain map
\begin{equation*}
\xymatrix{{\Z[\widehat{\FT}_{E_1}]}\ar[r]^-{\widehat \rho_1}\ar[d]^{\Psi_*}&{\Z[\widehat F_{E_1}]}\ar[r]^-{\widehat\nu_1}\ar[d]^{\Psi_*}&{\wedge^2(E_1)}\ar[d]^{\Psi\wedge\Psi}\\
{\Z[\widehat{\FT}_{E_2}]}\ar[r]^-{\widehat \rho_2}&{\Z[\widehat F_{E_2}]}\ar[r]^-{\widehat\nu_2}&{\wedge^2(E_2)}}
\end{equation*}
defined by taking an algebraic flattening $(e,f)$ to $(\Psi(e),\Psi(f))$.
We thus obtain maps
\begin{equation}
\Psi_*\colon \widehat\Pre_{E_1}(F_1)\to \widehat \Pre_{E_2}(F_2),\quad \Psi_*\colon \widehat\B_{E_1}(F_1)\to \widehat \B_{E_2}(F_2),
\end{equation}
satisfying the usual functoriality properties.

\begin{lemma}\label{ktoksq} Let $\Psi\colon E_1\to E_2$ be a covering. There is a commutative diagram of exact sequences 
\[\xymatrix{{E_1/2\Z}\ar[r]^-\chi\ar[d]^{\Psi_*}&{\widehat\Pre_{E_1}(F_1)}\ar[r]^-\pi\ar[d]^{\Psi_*}&{\Pre(F_1)}\ar[r]\ar[d]^{\Psi_*}&0\\{E_2/2\Z}\ar[r]^-\chi&{\widehat\Pre_{E_2}(F_2)}\ar[r]^-\pi&{\Pre(F_2)}\ar[r]&0.}\]
The map $\Psi_*\colon E_1/2\Z\to E_2/2\Z$ takes $e\in E_1/2\Z$ to $\Psi(1)\Psi(e)\in E_2/2\Z$, where $\Psi(1)\Psi(e)$ is defined using the natural action of $\Z\subset E_2$ on $E_2$ by multiplication.
\end{lemma}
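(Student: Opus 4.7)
The plan is to verify four things: (i) the vertical map $\Psi_* \colon E_1/2\Z \to E_2/2\Z$ given by $e \mapsto \Psi(1)\Psi(e)$ is well-defined, (ii) both rows are exact, (iii) the right square commutes, and (iv) the left square commutes. Exactness of the rows, (ii), is immediate from Theorem~\ref{preblochrel}. Commutativity of the right square, (iii), is immediate because $\Psi$ is a map of extensions covering the embedding $F_1 \hookrightarrow F_2$: applying $\pi$ to the first coordinate of a flattening, one has $\pi_2(\Psi(e)) = \Psi(\pi_1(e))$, and this descends to $\widehat\Pre$ and $\Pre$.

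For (i), I observe that because $\Psi$ sends $\iota_1(\Z)$ into $\iota_2(\Z)$, the element $k := \Psi(1)$ lies in $\Z\subset E_2$. If $e=2m$ lies in $2\Z\subset E_1$ with $m\in\Z$, then $\Psi(e)=2mk$, so $k\Psi(e)=2mk^2\in 2\Z\subset E_2$. Hence the formula $e\mapsto k\Psi(e)$ descends to $E_1/2\Z\to E_2/2\Z$.

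The heart of the argument is (iv). For $e\in E_1\setminus\Z$, choose $f\in E_1$ with $(e,f)\in\widehat F_{E_1}$. By the definition of $\Psi_*$ on flattenings,
\[\Psi_*(\chi(e))=(\Psi(e),\Psi(f)+k)-(\Psi(e),\Psi(f)),\]
which by equation~\eqref{chi1} applied in $\widehat\Pre_{E_2}(F_2)$ equals $k\,\chi(\Psi(e))$. On the other hand, by Corollary~\ref{extofchi} the map $\chi\colon E_2\to\widehat\Pre_{E_2}(F_2)$ is a homomorphism, hence $\Z$-linear, so $\chi(k\Psi(e))=k\,\chi(\Psi(e))$. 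These agree, which gives commutativity of the left square on generators in $E_1\setminus\Z$. Since both routes around the square are homomorphisms out of $E_1/2\Z$, the equality extends to all of $E_1/2\Z$ by Corollary~\ref{extofchi}.

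I do not foresee a serious obstacle. The only subtle point is recognizing why the scalar $\Psi(1)$ appears in the formula for $\Psi_*$ on $E_1/2\Z$: the map $\chi$ is defined by the unit shift $(e,f)\mapsto(e,f+1)$, and after transport along $\Psi$ that unit shift becomes a shift by $k=\Psi(1)$, which by \eqref{chi1} corresponds to $k$ applications of $\chi$ in $E_2$.
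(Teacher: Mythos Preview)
Your proof is correct and follows essentially the same route as the paper. The only cosmetic difference is that the paper collapses your two-step computation for the left square into a single invocation of Lemma~\ref{Rformula} (with $p=0$, $q=\Psi(1)$), which directly gives $(\Psi(e),\Psi(f)+\Psi(1))-(\Psi(e),\Psi(f))=\chi(\Psi(1)\Psi(e))$; your use of \eqref{chi1} together with the $\Z$-linearity of $\chi$ amounts to the same thing.
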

\begin{proof}
Exactness follows from Theorem~\ref{preblochrel}. Commutativity of the right square is obvious, and commutativity of the left square follows from the computation
\[\Psi_*(\chi(e))=\big(\Psi(e),\Psi(f)+\Psi(1)\big)-\big(\Psi(e),\Psi(f)\big)=\chi\big(\Psi(1)\Psi(e)\big),\]
where the second equality follows from Lemma~\ref{Rformula}.
\end{proof}
 
We wish to prove that the induced map $\Psi_*\colon\widehat\B_{E_1}(F_1)\to\widehat\B_{E_2}(F_2)$ of a covering only depends on the underlying embedding. The result below is elementary.


\begin{lemma}\label{TorELemma}
Let $F$ be a field. The torsion subgroup $\Tor(F^*)$ is isomorphic to a subgroup of $\Q/\Z$. 
For any extension $E$ of $F^*$ by $\Z$, the same is true for $\Tor(E)$.\qed
\end{lemma}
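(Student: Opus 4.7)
The plan is to first handle $\Tor(F^*)$ directly, then reduce the statement about $\Tor(E)$ to the statement about $\Tor(F^*)$ using torsion-freeness of $\Z$.

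For the first claim, I would use the standard fact that every finite subgroup of the multiplicative group of a field is cyclic. Consequently $\Tor(F^*)$, which is precisely the group $\mu_F$ of roots of unity in $F$, is a torsion abelian group in which every finitely generated subgroup is cyclic, i.e.~it is locally cyclic. To embed it into $\Q/\Z$, I would filter $\mu_F$ by its $n$-torsion subgroups $\mu_F[n]$ (each cyclic of some order $d_n\mid n$) and build compatible injections $\mu_F[n]\hookrightarrow \tfrac{1}{n}\Z/\Z$ by choosing generators consistently (equivalently, $\mu_F\cong\varinjlim\mu_F[n]$ is a subgroup of $\varinjlim \tfrac{1}{n}\Z/\Z = \Q/\Z$).

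For the second claim, take any extension $0\to \Z\xrightarrow{\iota} E\xrightarrow{\pi} F^*\to 0$. Since $\iota(\Z)$ is torsion-free, $\Tor(E)\cap \iota(\Z)=0$, so the restriction $\pi|_{\Tor(E)}\colon \Tor(E)\to F^*$ is injective. Its image is a torsion subgroup of $F^*$, hence lies in $\Tor(F^*)$. Composing with the embedding $\Tor(F^*)\hookrightarrow \Q/\Z$ from the first part yields the desired embedding $\Tor(E)\hookrightarrow \Q/\Z$.

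There is no real obstacle here: the only nontrivial input is the cyclicity of finite subgroups of $F^*$, and the rest is a formal diagram-chase using that $\Z$ is torsion-free. I would simply state the proof in one short paragraph combining these two observations.
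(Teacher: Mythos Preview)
Your proposal is correct and is exactly the kind of elementary argument the paper has in mind; indeed, the paper does not spell out a proof at all but simply marks the lemma with a \qed\ after calling it ``elementary.'' Your two observations---local cyclicity of $\mu_F$ and the injectivity of $\pi$ on torsion because $\Z$ is torsion-free---are precisely the standard justification.
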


\begin{lemma}\label{zeroinwedge} An element $\sum_i n_ie_i\wedge f_i$ is zero in $\wedge^2(E)$ if and only if we can write
\begin{equation}\label{eiandfi}e_i=k_iw+\sum\nolimits_j r_{ij}p_j,\quad f_i=l_iw+\sum\nolimits_j s_{ij}p_j,
\end{equation} 
where $p_j\in E$, $w\in E$ is a torsion element, and the integers $s_{ij}$,$r_{ij}$,$k_i$ and $l_i$ satisfy
\begin{enumerate}[(i)]
\item\label{wedge1} $\sum_i n_ir_{ij}s_{ij}$ is even for each $j$;
\item\label{wedge2} $\sum_i n_i(r_{ij}s_{ik}-r_{ik}s_{ij})=0$ for each $j\neq k$;
\item\label{wedge3} $\sum_i n_i(l_ir_{ij}-k_is_{ij})$ is divisible by $\ord(w)$ for each $j$;
\item\label{wedge4} $\sum_i n_ik_il_i$ is even.
\end{enumerate}
\end{lemma}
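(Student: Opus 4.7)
The plan is to reduce to a finitely generated subgroup of $E$, apply the structure theorem, and read off the four conditions from an explicit wedge-product calculation. The first move is to observe that $\wedge^2$ commutes with filtered colimits: $\sum_i n_ie_i\wedge f_i$ vanishes in $\wedge^2(E)$ if and only if it vanishes in $\wedge^2(E')$ for some finitely generated subgroup $E'\subseteq E$ containing all $e_i,f_i$. By Lemma~\ref{TorELemma}, $\Tor(E')$ is finite cyclic, say of order $m$ and generated by some $w_0$, so $E'\cong\Z^r\oplus\Z/m\Z$. After choosing a $\Z$-basis $p_1,\dots,p_r$ of the free part, I would write $e_i=k_i^0w_0+\sum_j r_{ij}p_j$ and $f_i=l_i^0w_0+\sum_j s_{ij}p_j$.

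Next I would expand $\sum_i n_ie_i\wedge f_i$ using the decomposition
\[\wedge^2(\Z^r\oplus\Z/m)\cong\wedge^2(\Z^r)\oplus(\Z^r\otimes\Z/m)\oplus\wedge^2(\Z/m),\]
with $\wedge^2(\Z^r)\cong\Z^{\binom{r}{2}}\oplus(\Z/2)^r$ (free part spanned by $p_j\wedge p_k$ for $j<k$; $2$-torsion part spanned by the $p_j\wedge p_j$) and $\wedge^2(\Z/m)\cong\Z/\gcd(2,m)$. Requiring each component to vanish reads off as follows: the free part of $\wedge^2(\Z^r)$ gives condition (ii), the $2$-torsion part gives (i), the $(\Z^r\otimes\Z/m)$ component gives (iii), and the $\wedge^2(\Z/m)$ component yields the weaker divisibility $\gcd(2,m)\mid\sum_i n_ik_i^0l_i^0$.

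The hard part is upgrading this last divisibility to the ``even'' statement in (iv), which is a genuine issue exactly when $m$ is odd. I would handle this by case analysis. If $m$ is even, (iv) is immediate; if $m=1$, take $w=0$ and $k_i=l_i=0$; if $m\geq 3$ is odd, replace $w_0$ by $w:=\tfrac{m+1}{2}w_0$, which is still a generator of $\Tor(E')$ because $\tfrac{m+1}{2}$ is a unit modulo $m$, and satisfies $2w=w_0$. The new coefficients are $k_i=2k_i^0$, $l_i=2l_i^0$, which forces $\sum_i n_ik_il_i=4\sum_i n_ik_i^0l_i^0$ to be divisible by $4$; condition (iii) survives the substitution because multiplying $\sum_i n_i(l_i^0 r_{ij}-k_i^0 s_{ij})$ by $2$ does not affect divisibility by the odd integer $m$; and (i), (ii) do not involve $w$ at all.

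For the converse, given any representation \eqref{eiandfi} satisfying (i)--(iv), I would simply expand $\sum_i n_ie_i\wedge f_i$ by bilinearity inside the finitely generated subgroup $\langle w,p_1,\dots,p_r\rangle\subseteq E$ and observe that the four conditions exactly annihilate the four types of wedge contributions ($p_j\wedge p_k$ for $j<k$, $p_j\wedge p_j$, $w\wedge p_j$, and $w\wedge w$), using that $a\wedge a$ is $2$-torsion for any $a$ and that $\ord(w)$ annihilates any wedge product involving $w$. This gives the ``if'' direction at once.
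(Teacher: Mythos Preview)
Your proposal is correct and follows essentially the same route as the paper: reduce to a finitely generated subgroup via the direct-limit property of $\wedge^2$, pick free generators $p_j$ and a cyclic torsion generator, expand, and read off (i)--(iv) as the coefficients of $p_j\wedge p_j$, $p_j\wedge p_k$, $w\wedge p_j$, $w\wedge w$. Your substitution $w=\tfrac{m+1}{2}w_0$ when $m$ is odd is exactly the paper's ``replace $w$ by a half if necessary'' step, just written out explicitly.
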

\begin{proof}

Let $\alpha=\sum_i n_ie_i\wedge f_i$. Since $\wedge^2$ commutes with direct limits, there exists a finitely generated subgroup $H$, containing the $e_i$'s and $f_i$'s, such that $\alpha$ is zero in $\wedge^2(E)$ if and only if $\alpha$ is zero in $\wedge^2(H)$. Let $p_j$ be free generators of $H$ and let $w$ be a generator of the torsion (which is cyclic by Lemma~\ref{TorELemma}). Write the $e_i$'s and $f_i$'s as in \eqref{eiandfi}. When expanding $\alpha\in\wedge^2(H)$, the coefficients of $p_j\wedge p_j$, $p_j\wedge p_k$, $p_j\wedge w$ and $w\wedge w$ of $\wedge^2(H)$ are given, respectively, by \eqref{wedge1}-\eqref{wedge4}. Hence, $\alpha=0\in\wedge^2(H)$ if \eqref{wedge1}-\eqref{wedge4} hold.
On the other hand, if $\alpha=0\in\wedge^2(H)$, \eqref{wedge1}-\eqref{wedge3} hold, whereas \eqref{wedge4} may fail if $w \wedge w=0\in \wedge^2(H)$. This happens if and only if $w$ is $2$--divisible, in which case, we may replace $w$ by a half if necessary, to make \eqref{wedge4} hold as well.
\end{proof}

\begin{thm}\label{isoonBhat} If $\Psi_1,\Psi_2\colon E_1\to E_2$ are equivalent coverings then
\begin{equation}
\Psi_{1*}=\Psi_{2*}\colon \widehat\B_{E_1}(F_1)\to \widehat \B_{E_2}(F_2).
\end{equation}
\end{thm}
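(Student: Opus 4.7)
The plan is to quantify how $\Psi_1$ and $\Psi_2$ differ, then use Lemma~\ref{Rformula} to express $\Psi_{1*}-\Psi_{2*}$ as $\chi$ applied to an explicit integer combination, and finally show that this combination lies in $2\Z\subset E_2$; the conclusion then follows from Lemma~\ref{kappadef}.

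First, since $\Psi_1$ and $\Psi_2$ are coverings of the same embedding $F_1\hookrightarrow F_2$, their difference $\delta=\Psi_1-\Psi_2\colon E_1\to E_2$ is a homomorphism landing in $\iota(\Z)\subseteq E_2$, and it vanishes on $\Z\subseteq E_1$, hence factors through a homomorphism $\delta\colon F_1^*\to\Z$. In particular $\delta$ annihilates $\Tor(E_1)$. Applying Lemma~\ref{Rformula} with $p=\delta(e)$ and $q=\delta(f)$ to any flattening $(e,f)\in\widehat F_{E_1}$ gives
\[\Psi_{1*}[(e,f)]-\Psi_{2*}[(e,f)]=\chi\bigl(\delta(f)\Psi_2(e)-\delta(e)\Psi_2(f)+\delta(e)\delta(f)\bigr)\in\widehat\Pre_{E_2}(F_2).\]
Hence for a cycle $\alpha=\sum_i n_i[(e_i,f_i)]$ representing a class in $\widehat\B_{E_1}(F_1)$, we get $\Psi_{1*}(\alpha)-\Psi_{2*}(\alpha)=\chi(T)$ with
\[T=\sum_i n_i\bigl(\delta(f_i)\Psi_2(e_i)-\delta(e_i)\Psi_2(f_i)+\delta(e_i)\delta(f_i)\bigr)\in E_2.\]

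The heart of the argument is to show $T\in 2\Z$. For this I would apply Lemma~\ref{zeroinwedge} to the relation $\sum_i n_i\,e_i\wedge f_i=0$ in $\wedge^2(E_1)$, obtaining expressions $e_i=k_iw+\sum_j r_{ij}p_j$ and $f_i=l_iw+\sum_j s_{ij}p_j$ with $w$ torsion and integers satisfying conditions (i)--(iv). Setting $d_j=\delta(p_j)\in\Z$ and using $\delta(w)=0$, one has $\delta(e_i)=\sum_j r_{ij}d_j$ and $\delta(f_i)=\sum_j s_{ij}d_j$. A direct expansion then splits $T$ into three parts: a coefficient times $W:=\Psi_2(w)$, a combination of the $P_k:=\Psi_2(p_k)$, and a constant in $\iota(\Z)$. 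Condition~(ii) forces the $P_k$-coefficients to vanish outright; condition~(iii) makes the coefficient of $W$ a multiple of $\ord(w)$, which annihilates $W$ since $\ord(W)\mid\ord(w)$; and the constant $\sum_{j,k}d_jd_k\sum_i n_ir_{ij}s_{ik}$ is even because condition~(i) handles the diagonal $j=k$ and condition~(ii) pairs the off-diagonal contributions $(j,k),(k,j)$ symmetrically to produce a factor of~$2$. Thus $T\in 2\Z$, and Lemma~\ref{kappadef} yields $\chi(T)=0$.

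The main obstacle is the careful bookkeeping in the $\wedge^2$-computation: one must match each of the three conditions (i)--(iii) of Lemma~\ref{zeroinwedge} to the appropriate component of $T$. A pleasant simplification is that condition~(iv) plays no role here, precisely because $\delta$ already annihilates the torsion summand; this is the small but essential observation that lets condition~(iii) alone suffice to kill the $W$-component.
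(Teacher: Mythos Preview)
Your proposal is correct and follows essentially the same route as the paper: express the difference via Lemma~\ref{Rformula} as $\chi(T)$, decompose using Lemma~\ref{zeroinwedge}, and kill the $P_k$-, $W$-, and integer-parts of $T$ using conditions (ii), (iii), and (i)+(ii) respectively. One small slip: the claim that $\delta=\Psi_1-\Psi_2$ vanishes on $\Z\subset E_1$ is not justified---coverings need not restrict to the identity on $\Z$ (cf.\ Lemma~\ref{ktoksq}, where $\Psi(1)$ appears explicitly, and Lemma~\ref{primextinj})---but this is harmless, since the only consequence you actually use is $\delta(w)=0$ for $w$ torsion, and that already follows from $\delta$ landing in the torsion-free group $\iota(\Z)$.
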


\begin{proof}For notational simplicity, we assume that $\Psi_1=\id$, and drop the subscripts of $F_i$ and $E_i$. This case is sufficient for most of the applications. We leave the general proof, which differs only in notation, to the reader. 

Let $\phi=\Psi_2-\Psi_1\colon E_1\to\Z\subset E_2$ and let $\alpha=\sum n_i(e_i,f_i)\in\widehat\B_{E}(F)$. We wish to prove that 
\begin{equation}
\Delta:=\Psi_{2*}(\alpha)-\Psi_{1*}(\alpha)=\sum\nolimits_i n_i\big(\big(e_i+\phi(e_i),f_i+\phi(f_i)\big)-(e_i,f_i)\big) 
\end{equation}
is zero in $\widehat \B_{E}(F)$. By Lemma~\ref{Rformula}, we have
\begin{equation}\label{RDeltaone}
\Delta=\chi\Big(\sum\nolimits_i n_i\big(\phi(f_i)e_i-\phi(e_i)f_i+\phi(e_i)\phi(f_i)\big)\Big).
\end{equation}
Since $\alpha\in\widehat \B_{E}(F)$, we have $\widehat \nu(\alpha)=\sum n_ie_i\wedge f_i=0\in \wedge^2(E)$.
By Lemma~\ref{zeroinwedge}, we can write 
\[e_i=k_iw+\sum\nolimits_j r_{ij}p_j,\quad f_i=l_iw+\sum\nolimits_j s_{ij}p_j,\]
where \eqref{wedge1}-\eqref{wedge4} in Lemma~\ref{zeroinwedge} are satisfied. Let $a_j=\phi(p_j)$. Let $T$ denote the sum in \eqref{RDeltaone}, i.e.~$\chi(T)=\Delta$. Since $w$ is a torsion element, $\phi(w)=0$, so by \eqref{RDeltaone} we have
\begin{multline}
T=\sum\nolimits_i n_i\Big(\sum\nolimits_js_{ij}a_j\big(\sum\nolimits_jr_{ij}p_j+k_iw\big)-\\
\sum\nolimits_jr_{ij}a_j\big(\sum\nolimits_js_{ij}p_j+l_iw\big)+\sum\nolimits_j r_{ij}a_j\sum\nolimits_j s_{ij}a_j\Big).
\end{multline}
When expanding the sum, the coefficient of $p_k$ is
\[\sum\nolimits_in_i\big(\sum\nolimits_j s_{ij}a_j\big)r_{ik}-\sum\nolimits_in_i\big(\sum\nolimits_j r_{ij}a_j\big)s_{ik}=-\sum\nolimits_j a_j\sum\nolimits_i n_i(r_{ij}s_{ik}-s_{ij}r_{ik}),\]
which is zero by Lemma~\ref{zeroinwedge}, \eqref{wedge2}.
Similarly, the coefficient of $w$ is
\[\sum\nolimits_i n_i\big(\sum\nolimits_j a_jk_is_{ij}-\sum\nolimits_j a_jl_ir_{ij}\big)=-\sum\nolimits_ja_j\sum\nolimits_in_i(l_ir_{ij}-k_is_{ij}),\]
which, by Lemma~\ref{zeroinwedge}, \eqref{wedge3}, is divisible by the order of $w$.
Finally, the remaining terms sum to the integer
\[\sum\nolimits_in_i\Big(\sum\nolimits_jr_{ij}a_j\sum\nolimits_js_{ij}a_j\Big)=\sum\nolimits_in_i\Big(\sum_{j\neq k}r_{ij}s_{ik}a_ja_k\Big)+\sum\nolimits_ja_j^2\sum\nolimits_in_ir_{ij}s_{ij},\]
which is even by Lemma~\ref{zeroinwedge}, \eqref{wedge1} and \eqref{wedge2}. 
Hence, $T$ is zero in $E/2\Z$, so $\Delta=\chi(T)$ is also zero.
\end{proof}

\begin{cor} Up to canonical isomorphism, the extended Bloch group $\widehat \B_E(F)$ depends only on the class of $E$ in $\Ext(F^*,\Z)$.
\end{cor}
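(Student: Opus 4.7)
The plan is to deduce this corollary directly from Theorem~\ref{isoonBhat} together with the functoriality that was established just before it. Two extensions $E_1$ and $E_2$ of $F^*$ by $\Z$ represent the same class in $\Ext(F^*,\Z)$ precisely when there is a homomorphism $\Psi\colon E_1\to E_2$ making the diagram of extensions commute (equivalently, inducing the identity on $F^*$ and on $\Z$). Such a $\Psi$ has trivial base embedding, namely the identity $F\hookrightarrow F$, so it qualifies as a covering in the paper's sense; by the five-lemma it is automatically a group isomorphism.

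First I would use $\Psi$ to produce the candidate isomorphism. By the functoriality established in the discussion following the definition of a covering, $\Psi$ gives a map $\Psi_*\colon \widehat\B_{E_1}(F)\to\widehat\B_{E_2}(F)$, and $\Psi^{-1}$ (which is also a covering of the identity embedding) gives a map in the opposite direction. Both compositions $\Psi^{-1}\circ\Psi$ and $\Psi\circ\Psi^{-1}$ are the identity on the respective middle groups, hence cover the identity embedding of $F$; by functoriality they induce the identity on $\widehat\B$. Therefore $\Psi_*$ is an isomorphism.

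Next I would check that $\Psi_*$ is independent of the choice of $\Psi$, which is the content of "canonical". If $\Psi_1,\Psi_2\colon E_1\to E_2$ are two homomorphisms of extensions realizing the same equivalence, then both cover the identity embedding of $F$ into itself, so by definition they are equivalent coverings. Theorem~\ref{isoonBhat} then immediately gives $\Psi_{1*}=\Psi_{2*}$, so the induced isomorphism $\widehat\B_{E_1}(F)\cong\widehat\B_{E_2}(F)$ is canonical.

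There is essentially no obstacle here, since the heavy lifting was absorbed by Theorem~\ref{isoonBhat}; the only mild point to state carefully is that an equivalence of extensions in the standard $\Ext$-theoretic sense is a covering in the paper's sense (because its base map is the identity, which trivially extends to an embedding of $F$ into $F$), and that the inverse of an equivalence is again such a covering, so that functoriality yields an isomorphism rather than just a map.
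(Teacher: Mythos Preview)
Your argument is correct and is essentially the paper's own proof: existence of a covering over the identity when $E_1=E_2$ in $\Ext(F^*,\Z)$, and Theorem~\ref{isoonBhat} to make the resulting map canonical. The only cosmetic difference is that you invoke the five lemma to use the literal inverse $\Psi^{-1}$, whereas the paper (as in the later Corollary~\ref{extmap}) would phrase it as taking any covering $E_2\to E_1$ of the identity and noting that both composites are equivalent to the identity coverings; either way the isomorphism and its canonicity follow from Theorem~\ref{isoonBhat} and functoriality.
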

\begin{proof}
If $E_1=E_2\in\Ext(F^*,\Z)$, there must exist a covering $\Psi\colon E_1\to E_2$ of the identity on $F$. Since any two such coverings are equivalent, the result follows.
\end{proof}

\subsection{General properties of extensions}
Let $\mu_F\subset F^*$ denote the roots of unity in $F$.
For a prime number $p$ let $\mu_p$ denote the $p$th roots of unity in $\mu_F$, and let $\mu_{p^\infty}$ be the subgroup of roots of unity of order a power of $p$. 
Note that $\mu_F=\oplus \mu_{p^\infty}$. Also note that up to isomorphism, $\mu_{p^\infty}$ is either $\Z/p^n\Z$ or $\Z[1/p]/\Z$.
We have the following classification of $\Z$--extensions of $\Z/p^n\Z$ and $\Z[1/p]/\Z$. The proofs are elementary and left to the reader.
\begin{lemma}\label{ZmodpExt} We have $\Ext(\Z/p^n\Z,\Z)=\Z/p^n\Z$. Let $0\leq k \leq n-1$ and let $x$ be an integer which is not divisible by $p$. 
The non-trivial extensions are given explicitly by 
\begin{equation}\label{extensionZmodp}
\xymatrix{0\ar[r]&{\Z}\ar[r]^-\iota&{\Z\oplus\Z/p^k\Z}\ar[r]^-\pi&{\Z/p^n\Z}\ar[r]&0},
\end{equation}
where the maps are defined by 
\begin{equation}\iota(1)=(p^{n-k},-x),\quad \pi(a,b)=xa+p^{n-k}b.\end{equation} 
The equivalence class of \eqref{extensionZmodp} only depends on the value of $x$ in $(\Z/p^{n-k}\Z)^*$, 
and the order of the extension in $\Ext(\Z/p^n\Z,\Z)$ is $p^{n-k}$.\qed
\end{lemma}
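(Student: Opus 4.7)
The plan is to compute $\Ext(\Z/p^n\Z,\Z)$ abstractly, then enumerate extensions concretely and match them up with $\Ext$ classes. First I would use the projective resolution $0\to\Z\xrightarrow{p^n}\Z\to\Z/p^n\Z\to 0$: applying $\Hom(-,\Z)$ yields $\Ext(\Z/p^n\Z,\Z)\cong\Z/p^n\Z$. To list the extensions concretely, I note that any such $E$ has rank one, hence $E\cong\Z\oplus T$ with $T$ finite cyclic. Writing $\iota(1)=(a,b)\in\Z\oplus T$ and lifting $b$ to $b^*\in\Z$, the requirement $E/\iota(\Z)\cong\Z/p^n\Z$ translates via Smith normal form on $\Z^2/\langle(a,b^*),(0,|T|)\rangle$ into the two conditions $|T|\cdot|a|=p^n$ and $\gcd$ of the matrix entries equal to $1$. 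Hence $T=\Z/p^k\Z$ for some $0\le k\le n$, $a=\pm p^{n-k}$ (the sign absorbed by negating the generator of $\Z$), and $b=-x$ with $p\nmid x$. The case $k=n$ gives the split extension; for $0\le k\le n-1$ I recover the family of the statement, with $\pi$ forced up to a unit of $\Z/p^n\Z$, and the choice $\pi(a,b)=xa+p^{n-k}b$ verified by $\pi\iota(1)=xp^{n-k}-xp^{n-k}=0$ and surjectivity from $\pi(1,0)=x\in(\Z/p^n\Z)^*$.

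For equivalence I would compute that the endomorphisms of $\Z\oplus\Z/p^k\Z$ have the form $(a,b)\mapsto(\epsilon a,sa+vb)$ with $\epsilon\in\Z$, $s\in\Z/p^k\Z$, $v\in\Z/p^k\Z$ (the $\Z$-component of $\phi(0,1)$ vanishes since the target $\Z$ is torsion-free), and an isomorphism requires $\epsilon=\pm1$, $v\in(\Z/p^k\Z)^*$. Imposing $\phi\circ\iota_x=\iota_{x'}$ and $\pi_{x'}\circ\phi=\pi_x$ forces $\epsilon=1$ (first coordinate of $\phi\iota_x$) and $v=1$ (the $a=0$ component of the second identity reads $p^{n-k}vb\equiv p^{n-k}b\pmod{p^n}$, so $vb\equiv b\pmod{p^k}$ for all $b$), and yields $x\equiv x'+p^{n-k}s\pmod{p^n}$ with $s$ ranging over $\Z$. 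Hence the equivalence class depends only on $x\pmod{p^{n-k}}$, giving the parameter space $(\Z/p^{n-k}\Z)^*$.

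Finally, to compute the order in $\Ext$, I would lift $\id\colon\Z/p^n\Z\to\Z/p^n\Z$ along the projective resolution by choosing $g(1)=(y,0)\in E$ with $xy\equiv 1\pmod{p^n}$; then the map $f\colon\Z\to\Z$ satisfying $\iota\circ f=g\circ p^n$ is determined by $f(1)\cdot p^{n-k}=p^n y$, so $f(1)=p^k y$. Thus the class of the extension in $\Ext(\Z/p^n\Z,\Z)\cong\Z/p^n\Z$ is represented by $p^k y$, which has order $p^{n-k}$ since $y\in(\Z/p^n\Z)^*$. The count $\sum_{k=0}^{n-1}\phi(p^{n-k})=p^n-1$, together with the split extension, exhausts all $p^n$ classes of $\Ext$, confirming the classification. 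No step is genuinely hard; the only delicate bookkeeping is tracking moduli, since the second coordinate of $\iota(1)$ lives in $\Z/p^k\Z$ while $\pi$ takes values in $\Z/p^n\Z$, and reconciling these is what produces the mixed modulus $p^{n-k}$ in the final answer.
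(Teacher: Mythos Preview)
The paper does not give a proof of this lemma; the sentence preceding it reads ``The proofs are elementary and left to the reader,'' and the statement carries a \qed. Your argument supplies exactly the elementary verification the paper omits, and it is correct. All the key steps are sound: the computation of $\Ext$ from the resolution $0\to\Z\xrightarrow{p^n}\Z\to\Z/p^n\Z\to 0$; the Smith normal form argument pinning down the middle group and $\iota$; the determination of $\Aut(\Z\oplus\Z/p^k\Z)$ and the resulting equivalence relation on the parameter $x$; and the identification of the $\Ext$ class as $p^ky$ by lifting the identity along the resolution. The one place where your exposition is slightly loose is the clause ``$\pi$ forced up to a unit of $\Z/p^n\Z$'' --- changing $\pi$ by an arbitrary unit of $\Z/p^n\Z$ does \emph{not} in general give an equivalent extension --- but your closing count $\sum_{k=0}^{n-1}\phi(p^{n-k})=p^n-1$ shows that the family in the statement already realizes every nontrivial class, so the argument is complete regardless.
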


\begin{lemma}\label{extensionpadic}
We have $\Ext(\Z[\frac{1}{p}]/\Z,\Z)=\Z_p$, the $p$--adic integers. Let $0\leq k$ be an integer and let $y$ be a $p$--adic integer with discrete valuation $k$.
The non-trivial extensions are given explicitly by
\begin{equation}\label{padicext}
\xymatrix{0\ar[r]&{\Z}\ar[r]^-\iota&{\Z[\frac{1}{p}]\oplus\Z/p^k\Z}\ar[r]^-{\pi}&{\Z[\frac{1}{p}]\big/\Z}\ar[r]&0,}
\end{equation}
where the maps are defined by 
\begin{equation}\iota(1)=(1/p^k,-y/p^k),\quad \pi(a,b)=\frac{ay+b}{p^k}.\end{equation}\qed
\end{lemma}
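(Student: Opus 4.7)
The plan is to identify $\Ext(\Z[\frac{1}{p}]/\Z,\Z)$ with $\Z_p$ via a direct-limit computation that reduces everything to Lemma~\ref{ZmodpExt}, and then match the explicit formulas level-by-level. I would write
\[\Z[\tfrac{1}{p}]/\Z=\varinjlim_n \tfrac{1}{p^n}\Z/\Z,\]
where the transition map $\tfrac{1}{p^n}\Z/\Z\hookrightarrow \tfrac{1}{p^{n+1}}\Z/\Z$ corresponds, after identifying $\tfrac{1}{p^n}\Z/\Z\cong \Z/p^n\Z$ via $\tfrac{m}{p^n}\leftrightarrow m$, to the map $\Z/p^n\Z\to\Z/p^{n+1}\Z$ sending $1$ to $p$. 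The standard colimit--limit exact sequence for $\Ext$ reads
\[0\to{\varprojlim_n}^1\Hom(\Z/p^n\Z,\Z)\to\Ext^1(\Z[\tfrac{1}{p}]/\Z,\Z)\to\varprojlim_n\Ext^1(\Z/p^n\Z,\Z)\to 0.\]
Since $\Hom(\Z/p^n\Z,\Z)=0$, the $\varprojlim^1$ term vanishes; lifting the transition map $\Z/p^n\Z\to\Z/p^{n+1}\Z$ to a chain map of the free resolutions $\Z\xrightarrow{p^n}\Z$ and $\Z\xrightarrow{p^{n+1}}\Z$ shows that the induced map on $\Ext^1$ is the standard reduction $\Z/p^{n+1}\Z\twoheadrightarrow \Z/p^n\Z$, giving $\Ext(\Z[\frac{1}{p}]/\Z,\Z)\cong \Z_p$ as claimed.

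For the explicit form, let $y\in\Z_p$ with $v_p(y)=k$ and write $y=p^ku$ with $u\in\Z_p^\times$. I would first verify that $\iota$ and $\pi$ are well defined: $-y/p^k=-u$ has a well-defined residue in $(\Z/p^k\Z)^\times$, and for $(a,b)\in\Z[\tfrac{1}{p}]\oplus\Z/p^k\Z$ the expression $(ay+b)/p^k\in\Q_p$ is independent of the integer lift of $b$ after reduction modulo $\Z_p$, once one identifies $\Q_p/\Z_p$ with $\Z[\tfrac{1}{p}]/\Z$ in the standard way. Injectivity of $\iota$, the identity $\pi\circ\iota=0$, and exactness at the middle are then short arithmetic computations.

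To identify this extension with the class $y$, I would restrict to the preimage of $\tfrac{1}{p^n}\Z/\Z$ for $n\ge k$. A direct calculation (using $y=p^ku$ with $u$ a unit) shows this preimage equals $\tfrac{1}{p^n}\Z\oplus\Z/p^k\Z$, isomorphic to $\Z\oplus\Z/p^k\Z$ via $(a,b)\mapsto(p^na,b)$; under this identification $\iota(1)=(p^{n-k},-u)$ and $\pi(a',b)=ua'+p^{n-k}b\bmod p^n$, exactly matching Lemma~\ref{ZmodpExt} with parameter $x=u\bmod p^{n-k}$. The corresponding element of $\Ext^1(\Z/p^n\Z,\Z)=\Z/p^n\Z$ is therefore $p^ku\bmod p^n=y\bmod p^n$, and compatibility in $n$ identifies the class of the full extension with $y$ in $\varprojlim_n\Z/p^n\Z=\Z_p$. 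The main obstacle is just the bookkeeping in this last step --- matching the global parametrization by $y\in\Z_p$ with the finite-level parametrization by $x\in(\Z/p^{n-k}\Z)^\times$ of Lemma~\ref{ZmodpExt} --- but once generators are chosen consistently the verification is routine.
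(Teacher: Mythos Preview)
The paper does not prove this lemma: it is stated with a terminal \qed, and the sentence preceding Lemma~\ref{ZmodpExt} explicitly says ``The proofs are elementary and left to the reader.'' So there is no proof in the paper to compare against.

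Your argument is correct and is exactly the kind of elementary verification the paper has in mind. Writing $\Z[1/p]/\Z$ as $\varinjlim_n p^{-n}\Z/\Z$ and using the $\varprojlim$--$\varprojlim^1$ sequence for $\Ext$ is the standard route to $\Ext(\Z[1/p]/\Z,\Z)\cong\Z_p$, and your identification of the transition maps with the reductions $\Z/p^{n+1}\Z\twoheadrightarrow\Z/p^n\Z$ is right. For the explicit extensions, your interpretation of $\pi$ via the identification $\Z[1/p]/\Z\cong\Q_p/\Z_p$ is the clean way to make sense of the formula $(ay+b)/p^k$ when $y$ is only a $p$--adic integer; the exactness checks you sketch all go through. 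The reduction to Lemma~\ref{ZmodpExt} at each finite level is also correct: the preimage of $p^{-n}\Z/\Z$ is indeed $p^{-n}\Z\oplus\Z/p^k\Z$, and under the obvious rescaling one recovers the maps of Lemma~\ref{ZmodpExt} with parameter $x\equiv u\bmod p^{n-k}$.

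One small caveat you already flag: the precise element of $\Z/p^n\Z$ assigned to the finite-level extension depends on a choice of isomorphism $\Ext^1(\Z/p^n\Z,\Z)\cong\Z/p^n\Z$, and with some conventions one gets $p^k u^{-1}$ rather than $p^k u$. This does not affect the lemma as stated (which only asserts the parametrization by $y\in\Z_p$ and the explicit form of the extensions, not that the class of the extension labeled $y$ is literally $y$ under a fixed identification), so your remark that the bookkeeping is routine once conventions are fixed is accurate.
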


\begin{defn} An extension $E$ is called \emph{primitive} if $E$ is torsion free. If $\mu_p$ is non-trivial, and if the restriction $E_{\mu_{p^\infty}}$ is torsion free, we say that $E$ is \emph{$p$--primitive}. 
\end{defn}

We state some elementary corollaries of Lemma~\ref{ZmodpExt} and Lemma~\ref{extensionpadic}.

\begin{cor}\label{Exteq}
$E$ is $p$--primitive if and only if $1\in E$ is divisible by $p$. If so, $1$ is divisible by $\vert \mu_{p^\infty}\vert$ (if $\vert \mu_{p^\infty}\vert=\infty$, $1$ is divisible by $p$ infinitely often).\qed
\end{cor}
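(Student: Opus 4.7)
The plan is to reduce to the two classification lemmas (Lemma~\ref{ZmodpExt} and Lemma~\ref{extensionpadic}) by passing to the subextension $E_{\mu_{p^\infty}} = \pi^{-1}(\mu_{p^\infty})$. The first observation is that divisibility of $\iota(1)\in E$ by a power of $p$ is insensitive to whether we work in $E$ or in $E_{\mu_{p^\infty}}$: if $\iota(1) = p^m e$ for some $e\in E$, then $p^m \pi(e)=0$ in $F^*$, so $\pi(e)\in\mu_{p^m}\subset\mu_{p^\infty}$, hence $e\in E_{\mu_{p^\infty}}$. Thus both the hypothesis ($p$-primitivity of $E$) and the desired conclusion (divisibility of $1$) depend only on the restricted extension.

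Next I split into the two possibilities for $\mu_{p^\infty}$. If $\mu_{p^\infty}\cong \Z/p^n\Z$, then by Lemma~\ref{ZmodpExt} the restricted extension has the form $E_{\mu_{p^\infty}}\cong \Z\oplus\Z/p^k\Z$ with $\iota(1)=(p^{n-k},-x)$ and $\gcd(x,p)=1$. Torsion-freeness is equivalent to $k=0$. When $k=0$ we have $\iota(1)=p^n\in\Z$, which is indeed divisible by $p^n=|\mu_{p^\infty}|$. When $k\geq 1$, the second coordinate $-x$ is a unit modulo $p$, so $\iota(1)$ is not divisible by $p$ in $\Z\oplus\Z/p^k\Z$. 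This handles both directions of the equivalence together with the sharper divisibility statement.

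If instead $\mu_{p^\infty}\cong\Z[\tfrac1p]/\Z$, then by Lemma~\ref{extensionpadic} the restricted extension is $\Z[\tfrac1p]\oplus\Z/p^k\Z$ with $\iota(1)=(1/p^k,-y/p^k)$ for a $p$-adic integer $y$ of valuation $k$. Again, torsion-freeness is equivalent to $k=0$. When $k=0$ we get $\iota(1)=(1,-y)$, and $1\in\Z[\tfrac1p]$ is divisible by $p$ infinitely often, as required. When $k\geq 1$, the element $-y/p^k$ is a $p$-adic unit, hence a unit in $\Z/p^k\Z$, so $\iota(1)$ is not divisible by $p$ in the restricted extension.

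The argument is essentially mechanical once the two classification lemmas are granted; the only genuine point is the initial reduction to $E_{\mu_{p^\infty}}$, and there is no substantive obstacle.
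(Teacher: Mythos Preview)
Your proof is correct and is precisely what the paper intends: the corollary is marked with \qed as an immediate consequence of Lemmas~\ref{ZmodpExt} and~\ref{extensionpadic}, and your reduction to $E_{\mu_{p^\infty}}$ followed by the case split is the natural way to make this explicit. The only minor omission is that the two lemmas classify only the \emph{non-trivial} extensions, so the split extension (and the degenerate case where $\mu_p$ itself is trivial) should be noted separately; in both situations $\iota(1)$ lies in a copy of $\Z$ where it is not $p$-divisible, and the extension is not $p$-primitive, so the equivalence holds trivially.
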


\begin{cor}\label{cond2cor} Suppose $\mu_{F}$ is finite. Then $E$ is primitive if and only if $E_{\mu_F}$ generates $\Ext(\mu_{F},\Z)$. 
In this case $E_{\mu_F}$ is free of rank one. Letting $\tilde x$ denote a generator, the extension is given explicitly by 
\begin{equation}
\xymatrix{E_{\mu_F}\colon 0\ar[r]&{\Z}\ar[r]^-{\iota_x}&{E_{\mu_F}}\ar[r]^-{\pi_x}&{\mu_F}\ar[r]&0},
\end{equation}
where $\iota_x$ takes $1$ to $\vert\mu_F\vert\tilde x$ and $\pi_x$ takes $\tilde x$ to a primitive root of unity $x$.\qed
\end{cor}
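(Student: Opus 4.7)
The plan is to reduce the primitivity condition on $E$ to the $p$-primary case covered by Lemma~\ref{ZmodpExt}. Since $\mu_F$ is a finite subgroup of the multiplicative group of a field, it is cyclic of order $n=|\mu_F|$, and decomposes as $\mu_F=\bigoplus_p \mu_{p^\infty}$ into its $p$-primary components. First I would show that $E$ is primitive iff $E_{\mu_F}$ is primitive: any torsion element of $E$ maps under $\pi\colon E\to F^*$ into the torsion subgroup $\mu_F$, so $\Tor(E)\subseteq E_{\mu_F}$, and the torsion subgroups of $E$ and $E_{\mu_F}$ coincide.

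Next I would transfer to the extension class. The natural isomorphism $\Ext(\mu_F,\Z)\cong\bigoplus_p \Ext(\mu_{p^\infty},\Z)$ sends the class of $E_{\mu_F}$ to the collection of classes of the pulled-back extensions $E_{\mu_{p^\infty}}$, and correspondingly $E_{\mu_F}$ is torsion free iff each $E_{\mu_{p^\infty}}$ is. Lemma~\ref{ZmodpExt} classifies extensions of $\Z/p^n\Z$ by $\Z$: the non-trivial ones have middle group $\Z\oplus\Z/p^k\Z$ with class of order $p^{n-k}$, while the trivial class gives middle group $\Z\oplus\Z/p^n\Z$. Hence $E_{\mu_{p^\infty}}$ is torsion free iff $k=0$ iff its class has maximal order $p^n$ iff its class generates $\Ext(\mu_{p^\infty},\Z)$. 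Assembling across primes yields the equivalence between $E$ being primitive and $E_{\mu_F}$ generating $\Ext(\mu_F,\Z)$.

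For the explicit description in the primitive case, $E_{\mu_F}$ is a finitely generated torsion-free abelian group fitting in $0\to\Z\to E_{\mu_F}\to\mu_F\to 0$ with $\mu_F$ finite, so $E_{\mu_F}$ has rank one and is isomorphic to $\Z$. Picking a generator $\tilde x$, the subgroup $\iota(\Z)$ has index $n$, so $\iota(1)=\pm n\tilde x$; after possibly replacing $\tilde x$ by $-\tilde x$ we arrange $\iota(1)=n\tilde x$, and then $\pi(\tilde x)$ is necessarily a generator of the cyclic group $\mu_F$, i.e.\ a primitive root of unity $x$. The main subtlety is really just invoking Lemma~\ref{ZmodpExt} prime by prime and matching class orders with torsion in the middle group; once that bookkeeping is done, the equivalence and the normal form both drop out.
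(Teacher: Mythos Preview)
Your argument is correct and is precisely the unpacking the paper has in mind: the corollary is stated without proof as an ``elementary corollary'' of Lemma~\ref{ZmodpExt} (and Lemma~\ref{extensionpadic}, which is not needed here since $\mu_F$ is finite), and you supply exactly that---reducing primitivity to $E_{\mu_F}$ via $\Tor(E)\subset E_{\mu_F}$, splitting into $p$-primary pieces, and reading off from Lemma~\ref{ZmodpExt} that torsion-freeness of $E_{\mu_{p^\infty}}$ is equivalent to the class having maximal order. The explicit rank-one description is likewise the expected index computation.
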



\begin{lemma}\label{primextwedge}
Suppose $E$ is $p$--primitive for an odd prime $p$. Then $\wedge^2(E_{\mu_{p^\infty}})=\Z/2\Z$ generated by $1\wedge 1$. If $E$ is also $2$--primitive, the map $\wedge^2(E_{\mu_{p^\infty}})\to\wedge^2(E)$ is $0$. Otherwise it is injective.
\end{lemma}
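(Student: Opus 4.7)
First I would unpack the structure of $E_{\mu_{p^\infty}}$ using the classification in Lemmas~\ref{ZmodpExt} and \ref{extensionpadic} together with the characterization of $p$--primitivity in Corollary~\ref{Exteq}. If $\mu_{p^\infty}$ is finite of order $p^n$, the torsion-free hypothesis singles out the $k=0$ case of Lemma~\ref{ZmodpExt}, so $E_{\mu_{p^\infty}}\cong\Z$ with a generator $\tilde x$ satisfying $p^n\tilde x=1$. If $\mu_{p^\infty}\cong\Z[\tfrac{1}{p}]/\Z$, the analogous case of Lemma~\ref{extensionpadic} gives $E_{\mu_{p^\infty}}\cong\Z[\tfrac{1}{p}]$ with $1\in\Z\subset\Z[\tfrac{1}{p}]$. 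Either way $E_{\mu_{p^\infty}}$ is torsion-free of rank one.

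Next I would compute $\wedge^2$ of each possibility. We have $\wedge^2(\Z)=\Z/2\Z$ generated by $\tilde x\wedge\tilde x$, and since $p$ is odd, $1\wedge 1=p^{2n}(\tilde x\wedge\tilde x)=\tilde x\wedge\tilde x$ in $\Z/2\Z$, so $1\wedge 1$ is a generator. For $\Z[\tfrac{1}{p}]$, flatness of localization gives $\Z[\tfrac{1}{p}]\otimes_\Z\Z[\tfrac{1}{p}]=\Z[\tfrac{1}{p}]$, so $\wedge^2(\Z[\tfrac{1}{p}])=\Z[\tfrac{1}{p}]/2\Z[\tfrac{1}{p}]$; since $p$ is odd, every denominator $p^k$ is invertible modulo $2$, so the inclusion $\Z\hookrightarrow\Z[\tfrac{1}{p}]$ induces an isomorphism $\Z/2\Z\cong\Z[\tfrac{1}{p}]/2\Z[\tfrac{1}{p}]$, and again $1\wedge 1$ generates.

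Finally the map $\wedge^2(E_{\mu_{p^\infty}})\to\wedge^2(E)$ is determined by the image of $1\wedge 1$. If $E$ is also $2$--primitive, Corollary~\ref{Exteq} produces $u\in E$ with $1=2u$, so $1\wedge 1=4(u\wedge u)=0$ in $\wedge^2(E)$ (using $2(u\wedge u)=0$), and the map is zero. Otherwise $1\notin 2E$, and I claim $1\wedge 1\neq 0$ in $\wedge^2(E)$; this is the one step with actual content and is handled by Lemma~\ref{zeroinwedge} applied to the single term $\alpha=1\wedge 1$. Writing $1=kw+\sum_j r_j p_j$ in a finitely generated subgroup of $E$ containing $1$, with $e_1=f_1=1$ so that $k_1=l_1=k$ and $r_{1j}=s_{1j}=r_j$, condition (i) says $r_j^2$ is even (hence each $r_j$ is even), conditions (ii) and (iii) are automatic, and condition (iv) says $k^2$ is even (hence $k$ is even). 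But then $1\in 2E$, contradicting non-$2$--primitivity. Hence the map is injective.

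The main obstacle is really only the last paragraph's application of Lemma~\ref{zeroinwedge}; once one recognizes that the one-term sum $1\wedge 1$ reduces conditions (i)--(iv) precisely to the parity statement "$1\in 2E$", the rest is bookkeeping and case analysis on whether $\mu_{p^\infty}$ is finite or Pr\"ufer.
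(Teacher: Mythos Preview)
Your proof is correct and follows essentially the same line as the paper's: identify $E_{\mu_{p^\infty}}$ as $\Z$ or $\Z[\tfrac{1}{p}]$, compute $\wedge^2$ of each and show $1\wedge 1$ generates, then use Corollary~\ref{Exteq} to reduce the question about the map into $\wedge^2(E)$ to whether $1$ is $2$--divisible in $E$. The paper computes $\wedge^2(\Z[\tfrac{1}{p}])$ by noting the generators $p^{-k}\wedge p^{-k}$ all collapse to $1\wedge 1$, whereas you use flatness of localization; both are fine. Your explicit invocation of Lemma~\ref{zeroinwedge} for the injectivity step (showing that $1\wedge 1=0$ in $\wedge^2(E)$ forces $1\in 2E$) fills in a detail the paper leaves implicit.
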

\begin{proof}
We assume that $E_{\mu_{p^\infty}}\cong \Z[1/p]$ leaving the simpler case $E_{\mu_{p^\infty}}\cong \Z$ to the reader. It is easy to see that $\Z[1/p]$ is $2$--torsion generated by elements $p^{-k}\wedge p^{-k}$, and since $p$ is odd, $p^{-k}\wedge p^{-k}=p^{2k}(p^{-k}\wedge p^{-k})=1\wedge 1$. By Corollary~\ref{Exteq}, $1$ is $2$--divisible in $E$ if and only if $E$ is $2$--primitive. This concludes the proof.
\end{proof}
Note that a primitive extension is $2$--primitive if and only if the characteristic of $F$ is not $2$. This is because $\mu_2$ is trivial in characteristic $2$ and non-trivial otherwise. 

\begin{prop}\label{Emuexactsequence} Let $E$ be a primitive extension and let $E(\mu_F)=2E_{\mu_F}$ if the characteristic of $F$ is $2$ and $E(\mu_F)=E_{\mu_F}$ otherwise. We have an exact sequence
\begin{equation}
\xymatrix{0\ar[r]&{E(\mu_F)}\ar[r]^-\iota & {E}\ar[r]^-\beta &{\wedge^2(E)}\ar[r]^-{\pi\wedge\pi}& {\wedge^2(F^*)}\ar[r]& 0,}
\end{equation}
where $\beta$ is the map taking $e$ to $e\wedge 1$. 
\end{prop}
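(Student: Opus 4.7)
The plan is to verify exactness at each of the four positions. Injectivity of $\iota$ and surjectivity of $\pi\wedge\pi$ are immediate. For exactness at $\wedge^2(E)$ I would invoke the standard fact that for a short exact sequence $0\to A\to B\to C\to 0$ of abelian groups, the kernel of $\wedge^2(B)\to\wedge^2(C)$ is the subgroup generated by elements $a\wedge b$ with $a\in A$ and $b\in B$; applied here with $A=\iota(\Z)$, this kernel is generated by $1\wedge e=-\beta(e)$ and hence coincides with the image of $\beta$.

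The substantive content is exactness at $E$, namely $\ker(\beta)=\iota(E(\mu_F))$. For $\beta\circ\iota=0$, the key observation is that for any $e\in E_{\mu_F}$ the subgroup $\langle e,1\rangle\subset E$ is finitely generated, torsion-free (since $E$ is primitive), and of rank at most one (because $\pi(e)\in\mu_F$ yields a relation $ne=m\cdot 1$), so $\langle e,1\rangle\cong\Z$. When the characteristic of $F$ is different from $2$, the preimage $\pi^{-1}(\mu_2)\subset E$ is a torsion-free extension of $\Z/2\Z$ by $\Z$ and hence isomorphic to $\Z$, yielding some $y\in E_{\mu_F}$ with $2y=1$; writing $\langle e,y\rangle=\Z v$ gives $e=av$ and $y=bv$, so $\beta(e)=av\wedge 2bv=2ab(v\wedge v)=0$ because $v\wedge v$ is $2$-torsion. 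In characteristic $2$, elements of $E(\mu_F)=2E_{\mu_F}$ have the form $e=2e'$, and applying the cyclic-subgroup argument to $\langle e',1\rangle=\Z v$ gives $\beta(e)=2(e'\wedge 1)=2ab(v\wedge v)=0$ by the same $2$-torsion fact.

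For the reverse inclusion $\ker(\beta)\subseteq\iota(E(\mu_F))$, suppose $e\wedge 1=0$ in $\wedge^2(E)$. I would apply Lemma~\ref{zeroinwedge} with $n_1=1$, $e_1=e$, $f_1=1$. Since $E$ is torsion-free, the torsion element $w$ is zero, leaving integer tuples $r_j,s_j$ and elements $p_j\in E$ with $e=\sum_j r_jp_j$ and $1=\sum_j s_jp_j$ subject to (i) $r_js_j$ even for each $j$ and (ii) $r_js_k=r_ks_j$ for $j\neq k$. Condition (ii) forces $(r_j)$ and $(s_j)$ to be $\Q$-proportional, so one extracts coprime integers $a,b$ and integers $t_j$ with $r_j=at_j$ and $s_j=bt_j$; setting $u=\sum t_jp_j\in E$ gives $e=au$ and $1=bu$, whence $b\pi(u)=0$ in $F^*$, placing $u$, and hence $e=au$, in $E_{\mu_F}$. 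This already settles the characteristic different from $2$ case, where $E(\mu_F)=E_{\mu_F}$.

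The main obstacle is refining the characteristic $2$ case to show that $a$ is even (so that $e=au\in 2E_{\mu_F}$). Condition (i) says $abt_j^2$ is even for every $j$. Either some $t_{j_0}$ is odd, forcing $ab$ to be even so that coprimality gives $a$ even (done) or $b$ even; or every $t_j$ is even, so $u=2u'$ for some $u'\in E$. In each subcase where $a$ is not forced to be even, one derives $1\in 2E$. But $1=2z$ in $E$ would force $\pi(z)\in\mu_2$, which is trivial in characteristic $2$, so $z\in\iota(\Z)$ and $1=2z$ in $\Z$---impossible. This contradiction pins $a$ to be even, completing the proof.
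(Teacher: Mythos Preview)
Your proof is correct. Compared with the paper's argument, you take a more self-contained route at both substantive steps. For $\beta\circ\iota=0$, the paper decomposes $E_{\mu_F}$ into its $p$-primary pieces and invokes Lemma~\ref{primextwedge} (that $\wedge^2(E_{\mu_{p^\infty}})=\Z/2\Z$, and maps to zero in $\wedge^2(E)$ precisely when $E$ is $2$-primitive); you instead exploit that $\langle e,1\rangle$ (or $\langle e,y\rangle$ with $2y=1$) is a finitely generated torsion-free rank-one group, hence cyclic, which kills $e\wedge 1$ directly. For the reverse inclusion in characteristic~$2$, the paper again reduces to $E_{\mu_{p^\infty}}$ and reads off $2$-divisibility of $e$ from Lemma~\ref{primextwedge}, whereas you extract the same conclusion by pushing the parity condition~(i) of Lemma~\ref{zeroinwedge} and the observation that $1\in E$ is not $2$-divisible when $\mu_2$ is trivial. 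The paper's version is shorter here because the $2$-primitivity dichotomy has been packaged into Lemma~\ref{primextwedge}; your version avoids both that lemma and the prime decomposition, trading them for a slightly longer case analysis.
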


\begin{proof}
The only non-trivial part is to show that the kernel of $\beta$ is $E(\mu_F)$.
First note that if $e\wedge 1=0\in\wedge^2(E)$, $e$ and $1$ must be linearly dependent over $\Z$ (this follows from Lemma~\ref{zeroinwedge}). Hence, $e$ must be in $E_{\mu_F}$. 

Let $e\in E_{\mu_F}$. We may assume that $e\in E_{\mu_{p^\infty}}$ for some prime $p$. 
Suppose the characteristic of $F$ is not $2$. By Lemma~\ref{primextwedge}, the restriction of $\beta$ to $E_{\mu_{p^\infty}}$ is zero if $p\neq 2$, and by inspection, this also holds if $p=2$. Hence, $\beta(e)=0$. Now suppose the characteristic of $F$ is $2$. In particular $p$ must be odd. Pick integers $k$ and $l$ such that $p^ke=l\in E_{\mu_{p^\infty}}$. By Lemma~\ref{primextwedge}, $e\wedge 1=p^ke\wedge 1=l\wedge 1$ is zero in $\wedge^2(E)$ if and only if $l$ is even. Hence, $p^k e$, and therefore also $e$, is $2$--divisible.
\end{proof}

\begin{cor}\label{BhatEandB} Let $E$ be primitive. There is an exact sequence
\[\xymatrix{{\widetilde{\mu_F}}\ar[r]^-\chi&{\widehat\B_E(F)}\ar[r]^-\pi&{\B(F)}\ar[r]&0,}\]
where $\widetilde{\mu_F}=E(\mu_F)/2\Z$.\qed
\end{cor}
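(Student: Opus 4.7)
The plan is to splice together the two exact sequences already in hand: the sequence from Theorem~\ref{preblochrel}, namely $E/2\Z\xrightarrow{\chi}\widehat\Pre_E(F)\xrightarrow{\pi}\Pre(F)\to 0$, and the sequence from Proposition~\ref{Emuexactsequence}, which identifies $\ker\beta$ with $E(\mu_F)$, where $\beta(e)=e\wedge 1$. The key observation making this work is that $\widehat\nu\circ\chi=\beta$: indeed, $\widehat\nu(\chi(e))=\widehat\nu\bigl((e,f+1)-(e,f)\bigr)=e\wedge(f+1)-e\wedge f=e\wedge 1$, and $2(1\wedge 1)=0$ in $\wedge^2(E)$ so this descends to $E/2\Z\to\wedge^2(E)$.

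From this identity, the restriction of $\chi$ to $\widetilde{\mu_F}=E(\mu_F)/2\Z$ lands in $\widehat\B_E(F)=\ker\widehat\nu$, giving a well-defined map $\chi\colon\widetilde{\mu_F}\to\widehat\B_E(F)$. Surjectivity of $\pi\colon\widehat\B_E(F)\to\B(F)$ is a standard diagram chase: given $\alpha\in\B(F)$, lift it to some $\widehat\alpha\in\widehat\Pre_E(F)$ using Theorem~\ref{preblochrel}; commutativity of
\[\xymatrix{\widehat\Pre_E(F)\ar[r]^-{\widehat\nu}\ar[d]^\pi&\wedge^2(E)\ar[d]^{\pi\wedge\pi}\\\Pre(F)\ar[r]^-\nu&\wedge^2(F^*)}\]
together with $\nu(\alpha)=0$ places $\widehat\nu(\widehat\alpha)$ in $\ker(\pi\wedge\pi)=\mathrm{image}(\beta)$ by Proposition~\ref{Emuexactsequence}, so we can write $\widehat\nu(\widehat\alpha)=\beta(e)=\widehat\nu(\chi(e))$ for some $e\in E$; then $\widehat\alpha-\chi(e)$ is the desired preimage in $\widehat\B_E(F)$.

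For exactness at $\widehat\B_E(F)$, the inclusion $\mathrm{image}(\chi)\subset\ker\pi$ is immediate from Theorem~\ref{preblochrel}. Conversely, if $\widehat\alpha\in\widehat\B_E(F)$ with $\pi(\widehat\alpha)=0$, Theorem~\ref{preblochrel} gives $\widehat\alpha=\chi(e)$ for some $e\in E/2\Z$, and then $0=\widehat\nu(\widehat\alpha)=\beta(e)$ forces $e\in E(\mu_F)/2\Z=\widetilde{\mu_F}$ by Proposition~\ref{Emuexactsequence}.

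There is no real obstacle here: every step is a formal consequence of Theorem~\ref{preblochrel} and Proposition~\ref{Emuexactsequence}. The only point deserving a moment of care is checking that $\widehat\nu\circ\chi=\beta$ genuinely descends to $E/2\Z$ (which it does since $2(1\wedge 1)=0$), and verifying that $2\Z\subset E(\mu_F)$ in both characteristic conventions so that $\widetilde{\mu_F}=E(\mu_F)/2\Z$ embeds naturally into $E/2\Z$; this follows from $1\in E_{\mu_F}$ and the definition $E(\mu_F)=2E_{\mu_F}$ in characteristic $2$.
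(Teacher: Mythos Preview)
Your argument is correct and is exactly the intended one: the paper states the corollary with a \qed\ and no proof, leaving the reader to do precisely the diagram chase you wrote out, combining Theorem~\ref{preblochrel} with Proposition~\ref{Emuexactsequence} via the identity $\widehat\nu\circ\chi=\beta$. Your verification that $2\Z\subset E(\mu_F)$ in both characteristic cases is the only point one might worry about, and you handled it.
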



Note that $\widetilde{\mu_F}$ is independent of $E$ up to isomorphism. If the characteristic of $F$ is not $2$, $\widetilde{\mu_F}$ is the unique non-trivial $\Z/2\Z$--extension of $\mu_F$, and in characteristic $2$, $\widetilde{\mu_F}$ is just $\mu_F$. The notation $\widetilde{\mu_F}$ thus agrees with that of Suslin~\cite{Suslin}.

\section{The extended Bloch group of a field}\label{BhatF}
We now show that if we impose some conditions on $F$, the extended Bloch groups $\widehat \B_E(F)$ are naturally isomorphic, and we can glue them together to form an extended Bloch group $\widehat \B(F)$ depending only on $F$. This group admits a natural action by the automorphism group of $F$.

\begin{defn}\label{freefielddefn} A field $F$ is called \emph{free} if it satisfies the following two conditions: 
\begin{enumerate}[(i)]
\item $F^*/\mu_F$ is a free abelian group;\label{field1}
\item $\vert \mu_F\vert<\infty$.\label{field2}
\end{enumerate}
\end{defn}
Free fields include number fields and finite fields, and freeness is preserved under finite transcendental extensions. For a discussion of fields satisfying \eqref{field1}, see e.g.~May~\cite{May}. 
Throughout this section $F$ denotes a free field.

Corollary~\ref{cond2cor} implies that primitive $\Z$--extensions of $F^*$ are in one-one correspondence with primitive roots of unity. If $x$ is a primitive root of unity, we let $E_x$ denote the corresponding primitive extension. The restriction of $E_x$ to $\mu_F$ is free of rank one with generator $\tilde x$ mapping to $x\in F^*$. Since $F^*$ is free modulo torsion, any extension of $F^*$ is uniquely determined by its restriction to $\mu_F$. The following result is a direct consequence of this discussion and Theorem~\ref{isoonBhat}.

\begin{lemma} Let $F'$ be any field (not necessarily free) and let $\sigma\colon F\to F'$ be an embedding. Let $E'$ be a $\Z$--extension of $F'^*$ and let $E$ be a primitive $\Z$--extension of $F^*$. There exists a covering $\widehat\sigma\colon E\to E'$ of $\sigma$. The induced map $\widehat\sigma_*\colon\widehat\B_{E}(F)\to\widehat\B_{E'}(F')$ depends only on $\sigma$ and not on the choice of covering.\qed
\end{lemma}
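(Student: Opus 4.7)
The lemma asserts two things: that a covering $\widehat{\sigma}\colon E\to E'$ of $\sigma$ exists, and that the induced map on $\widehat{\B}$ depends only on $\sigma$. The second assertion follows from Theorem~\ref{isoonBhat} applied to two such coverings, since any two are equivalent in the sense of the definition preceding that theorem (they cover the same embedding). So the real content is existence, and my plan is to build $\widehat{\sigma}$ by decomposing $E$ via the hypotheses that $F$ is free and $E$ is primitive.

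Freeness of $F$ gives a splitting $F^*\cong \mu_F\oplus A$ with $A$ free abelian. Pulling back along $\pi\colon E\to F^*$, the preimage $\pi^{-1}(A)$ is a $\Z$-extension of the free group $A$, hence splits as $\Z\oplus\widetilde{A}$ with $\widetilde{A}$ mapping isomorphically onto $A$. A quick check that $E_{\mu_F}+\widetilde{A}=E$ and $E_{\mu_F}\cap\widetilde{A}=0$ then yields a direct-sum decomposition $E\cong E_{\mu_F}\oplus\widetilde{A}$ of abelian groups. By Corollary~\ref{cond2cor}, primitivity identifies $E_{\mu_F}$ as infinite cyclic, generated by some $\widetilde{x}$ with $\pi(\widetilde{x})=x$ a primitive root of unity generating $\mu_F$.

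With the decomposition in hand I would define $\widehat{\sigma}$ on each summand and combine. On $\widetilde{A}$, the composition $\widetilde{A}\xrightarrow{\sim}A\xrightarrow{\sigma}F'^{\,*}$ lifts to a homomorphism $\widetilde{A}\to E'$ because $\widetilde{A}$ is free and $\pi'\colon E'\to F'^{\,*}$ is surjective. On $\Z\cdot\widetilde{x}$, pick any preimage $\xi'\in E'$ of $\sigma(x)$ and set $\widehat{\sigma}(\widetilde{x})=\xi'$. The resulting homomorphism $\widehat{\sigma}\colon E\to E'$ then satisfies $\pi'\circ\widehat{\sigma}=\sigma\circ\pi$ by construction, and since $\sigma$ is already a field embedding $F\hookrightarrow F'$, this is a covering of $\sigma$ in the sense of the definition.

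The main obstacle I anticipate is purely organizational: verifying that the general case of Theorem~\ref{isoonBhat}, with differing domain and codomain rather than the self-map case written out in detail, genuinely applies to yield independence. Since the author explicitly states that the general proof differs only in notation, and the algebraic core (the vanishing of the discrepancy $T$ in $E'/2\Z$ via Lemma~\ref{zeroinwedge}) carries over verbatim, this should require no new ideas. The essential input of the lemma is thus the decomposition of $E$ as a direct sum coming from freeness of $F$ and primitivity of $E$; everything else is bookkeeping.
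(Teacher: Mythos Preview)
Your proposal is correct and follows the same route the paper indicates: decompose $E$ using freeness of $F$ and primitivity of $E$ (so that $E_{\mu_F}$ is infinite cyclic by Corollary~\ref{cond2cor} and the free part lifts), build $\widehat\sigma$ on each summand, and invoke Theorem~\ref{isoonBhat} for the independence statement. The paper treats the lemma as an immediate consequence of the preceding discussion and leaves it with a \qed; you have simply written out the details that the author suppresses.
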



\begin{cor}\label{extmap}
For any pair $E_x$, $E_y$ of primitive $\Z$--extensions of $F^*$, there exists a covering $\Psi_{xy}\colon E_x\to E_y$ of the identity on $F$. The induced map $\Psi_{xy*}\colon\widehat\B_{E_x}(F)\to\widehat\B_{E_y}(F)$ is an isomorphism with inverse $\Psi_{yx*}$.
\end{cor}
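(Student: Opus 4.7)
The plan is to apply the preceding lemma twice (once in each direction) and then use the functoriality of the covering construction together with the uniqueness built into that lemma.

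First I would produce the coverings. Taking $\sigma=\id_F$, source extension $E=E_x$ (which is primitive by hypothesis) and target $E'=E_y$ in the preceding lemma, existence gives a covering $\Psi_{xy}\colon E_x\to E_y$ of the identity on $F$. Interchanging the roles of $x$ and $y$ and using that $E_y$ is also a primitive extension of $F^*$, the same lemma produces $\Psi_{yx}\colon E_y\to E_x$. (At this point neither choice is canonical, but the induced maps on the extended Bloch groups will be, and this is all we need.)

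Next I would verify that the induced maps on $\widehat\B$ are mutually inverse. The composition $\Psi_{yx}\circ\Psi_{xy}\colon E_x\to E_x$ is itself a covering of $\id_F$, as is the identity map $\id_{E_x}$. By the uniqueness clause of the preceding lemma, any two coverings of the same embedding induce the same map on the extended Bloch group, so
\[(\Psi_{yx})_*\circ(\Psi_{xy})_*=(\Psi_{yx}\circ\Psi_{xy})_*=(\id_{E_x})_*=\id_{\widehat\B_{E_x}(F)},\]
where the first equality uses the functoriality of $\Psi_*$ asserted after the definition of the functorial construction, and the last equality is obvious from the definition of the induced map on algebraic flattenings. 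Interchanging $x$ and $y$ gives $(\Psi_{xy})_*\circ(\Psi_{yx})_*=\id_{\widehat\B_{E_y}(F)}$, so $\Psi_{xy*}$ is an isomorphism with inverse $\Psi_{yx*}$.

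There is essentially no obstacle at this stage: all the real content has already been packaged into the preceding lemma, which in turn rests on Theorem~\ref{isoonBhat} (equivalent coverings induce the same map on $\widehat\B$). The only things to check are the symmetry of the hypothesis (both $E_x$ and $E_y$ are primitive, so the lemma applies in both directions) and that the composition of two coverings of identities is again a covering of the identity, which is immediate from the definition.
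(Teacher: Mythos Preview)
Your proof is correct and follows essentially the same route as the paper: apply the preceding lemma in each direction to produce $\Psi_{xy}$ and $\Psi_{yx}$, then observe that the compositions $\Psi_{yx}\circ\Psi_{xy}$ and $\Psi_{xy}\circ\Psi_{yx}$ are coverings of the identity, hence equivalent to $\id_{E_x}$ and $\id_{E_y}$, so the induced maps are mutually inverse. The paper's proof is just a terser version of what you wrote.
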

\begin{proof}
Existence of $\Psi_{xy}$ follows from Lemma~\ref{extmap}, and since the coverings $\Psi_{xy}\circ\Psi_{yx}$ and $\Psi_{yx}\circ\Psi_{xy}$ are equivalent to the identities on $E_y$ and $E_x$, the result follows.
\end{proof}

\begin{cor}\label{embaction}
Let $\tau$ be an automorphism of $F$. For each primitive root of unity $x\in F$, there exists a unique covering $\widehat\tau_x\colon E_x\to E_{\tau(x)}$ of $\tau$. The induced maps $\widehat\tau_{x*}\colon \widehat \B_{E_x}(F)\to\widehat\B_{E_{\tau(x)}}(F)$ satisfy 
\begin{equation}\label{Psitaueq}\widehat\tau_{y*}\circ\Psi_{xy*}=\Psi_{\tau(x)\tau(y)*}\circ\widehat\tau_{x*}.\end{equation}
\end{cor}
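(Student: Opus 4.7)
The plan is to deduce existence and the commutation identity~\eqref{Psitaueq} directly from the preceding structural results on coverings, so the proof is short.

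First I would handle existence of the covering by invoking the lemma stated just before Corollary~\ref{extmap} with $\sigma = \tau\colon F\to F$, with $E = E_x$ (primitive by hypothesis), and $E' = E_{\tau(x)}$. That lemma produces a covering $\widehat\tau_x\colon E_x \to E_{\tau(x)}$ of $\tau$ and simultaneously guarantees that the induced map $\widehat\tau_{x*}$ on the extended Bloch groups is independent of the chosen covering. Strictly speaking, the covering itself is unique only up to equivalence: its restriction to $E_x|_{\mu_F}$ is pinned down (by the relation $|\mu_F|\tilde x = \iota_x(1)$ combined with torsion-freeness of $E_{\tau(x)}$) to the rule $\tilde x \mapsto \widetilde{\tau(x)}$, while the values on lifts of a basis of $F^*/\mu_F$ are determined only modulo $\Z \subset E_{\tau(x)}$. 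This remaining ambiguity is exactly the equivalence ambiguity, and is therefore harmless for $\widehat\tau_{x*}$.

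For the commutation identity, I would form the two composite coverings $E_x \to E_{\tau(y)}$ underlying the two sides of~\eqref{Psitaueq}. The left-hand composite $\widehat\tau_y \circ \Psi_{xy}$ covers $\tau \circ \id_F = \tau$, while the right-hand composite $\Psi_{\tau(x)\tau(y)} \circ \widehat\tau_x$ covers $\id_F \circ \tau = \tau$. Since both cover the same embedding of $F$, they are equivalent coverings by definition, and Theorem~\ref{isoonBhat} then yields the desired equality $\widehat\tau_{y*}\circ\Psi_{xy*} = \Psi_{\tau(x)\tau(y)*}\circ\widehat\tau_{x*}$ of induced maps on $\widehat\B_{E_x}(F) \to \widehat\B_{E_{\tau(y)}}(F)$.

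I do not expect any genuine obstacle: the argument is essentially bookkeeping. The one point to keep straight is the distinction between a covering as a specific homomorphism of extensions and the equivalence class that actually determines the induced map on $\widehat\B$---a distinction already encapsulated by Theorem~\ref{isoonBhat}.
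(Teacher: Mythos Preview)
Your proposal is correct and matches the paper's own proof essentially verbatim: existence is obtained from the lemma immediately preceding Corollary~\ref{extmap}, and the commutation identity follows because both composites $\widehat\tau_y\circ\Psi_{xy}$ and $\Psi_{\tau(x)\tau(y)}\circ\widehat\tau_x$ are coverings of the same embedding $\tau$, so Theorem~\ref{isoonBhat} applies. Your added remark that the covering $\widehat\tau_x$ is literally unique only on the restriction to $E_x\vert_{\mu_F}$ and otherwise determined only up to equivalence is a useful clarification that the paper glosses over.
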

\begin{proof}
Existence of $\widehat\tau_x$ follows from Lemma~\ref{extmap}, and since $\widehat\tau_{y}\circ\Psi_{xy}$ and $\Psi_{\tau(x)\tau(y)}\circ\widehat\tau_{x}$ are both coverings of $\tau$, the result follows.
\end{proof}



\begin{defn}\label{extblochoffield}
The extended Bloch group of $F$ is defined by
\begin{equation}
\widehat\B(F)=\lim_{\longleftarrow} \widehat\B_{E_x}(F)=\left\{(\alpha_{E_x})\in\prod\widehat\B_{E_x}(F)\bigm\vert\alpha_{E_y}=\Psi_{xy*}(\alpha_{E_x})\right\},
\end{equation}
where the product is over primitive roots of unity.
\end{defn}

\begin{remark}
If $F$ is not free, we can still define $\widehat\B(F)$ as an inverse limit, but in the general case, the primitive extensions do not form a directed set, and we do not see how to establish the desired connection with the classical Bloch group.
\end{remark}

\begin{prop}\label{Galoisaction}
There is a natural action of $\Aut(F)$ on $\widehat\B(F)$, where each automorphism acts by an isomorphism.
\end{prop}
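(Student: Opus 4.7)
The plan is to use Corollary~\ref{embaction} as the main building block. For each $\tau\in\Aut(F)$ and each primitive root of unity $x\in F$, it provides a unique (up to equivalence) covering $\widehat\tau_x\colon E_x\to E_{\tau(x)}$ of $\tau$, which descends to a well-defined homomorphism $\widehat\tau_{x*}\colon\widehat\B_{E_x}(F)\to\widehat\B_{E_{\tau(x)}}(F)$, together with the compatibility $\widehat\tau_{y*}\circ\Psi_{xy*}=\Psi_{\tau(x)\tau(y)*}\circ\widehat\tau_{x*}$.

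Given $\alpha=(\alpha_{E_x})\in\widehat\B(F)$, I define the action by
\begin{equation*}
(\tau\cdot\alpha)_{E_y}=\widehat\tau_{\tau^{-1}(y)\,*}\bigl(\alpha_{E_{\tau^{-1}(y)}}\bigr)\in\widehat\B_{E_y}(F)
\end{equation*}
for every primitive root of unity $y$. The first thing to check is that $\tau\cdot\alpha$ really lies in the inverse limit, i.e.\ that for any pair $y,z$ of primitive roots of unity one has $(\tau\cdot\alpha)_{E_z}=\Psi_{yz*}\bigl((\tau\cdot\alpha)_{E_y}\bigr)$. Substituting $x'=\tau^{-1}(y)$, $y'=\tau^{-1}(z)$ into the identity \eqref{Psitaueq} yields $\widehat\tau_{\tau^{-1}(z)*}\circ\Psi_{\tau^{-1}(y)\tau^{-1}(z)*}=\Psi_{yz*}\circ\widehat\tau_{\tau^{-1}(y)*}$, and combining this with the compatibility of $\alpha$ itself gives the desired equality.

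Next I verify that this defines a group action. The identity of $\Aut(F)$ lifts (uniquely up to equivalence) to the identity on each $E_x$, and by Theorem~\ref{isoonBhat} it induces the identity on each $\widehat\B_{E_x}(F)$, so $\id\cdot\alpha=\alpha$. For $\tau_1,\tau_2\in\Aut(F)$, both $\widehat{\tau_1}_{\tau_2(x)}\circ\widehat{\tau_2}_x$ and $\widehat{(\tau_1\tau_2)}_x$ are coverings of $\tau_1\tau_2$ with source $E_x$ and target $E_{\tau_1\tau_2(x)}$; equivalence of coverings together with Theorem~\ref{isoonBhat} then gives $\widehat{(\tau_1\tau_2)}_{x*}=\widehat{\tau_1}_{\tau_2(x)*}\circ\widehat{\tau_2}_{x*}$, which at the level of inverse limits reads $(\tau_1\tau_2)\cdot\alpha=\tau_1\cdot(\tau_2\cdot\alpha)$.

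Finally, each $\tau$ acts by an isomorphism: the action of $\tau^{-1}$ inverts it, because at every component $\widehat{\tau^{-1}}_{\tau(x)*}\circ\widehat\tau_{x*}$ is induced by a covering of the identity on $F$, hence equals the identity on $\widehat\B_{E_x}(F)$ by Theorem~\ref{isoonBhat}. Naturality in $\tau$ is built in by construction. There is no serious obstacle; the only point to be careful about is the bookkeeping of indices when verifying compatibility with the transition maps $\Psi_{yz*}$, which is exactly the content of \eqref{Psitaueq}.
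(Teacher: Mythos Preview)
Your proof is correct and is exactly the argument the paper has in mind: the paper's own proof is the single sentence ``This follows directly from \eqref{Psitaueq}'', and you have carefully unpacked that sentence, using Corollary~\ref{embaction} and Theorem~\ref{isoonBhat} precisely as intended.
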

\begin{proof} This follows directly from \eqref{Psitaueq}.
\end{proof}

\begin{prop}\label{almostexact}
There is an exact sequence
\[\xymatrix{{\widetilde{\mu_{F}}}\ar[r]^-\chi&{\widehat\B(F)}\ar[r]^-\pi&{\B(F)}\ar[r]&0.}\]
\end{prop}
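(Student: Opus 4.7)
The plan is to transfer the exact sequence of Corollary~\ref{BhatEandB} through the inverse limit defining $\widehat\B(F)$. The crucial input is Corollary~\ref{extmap}, which says every transition map $\Psi_{xy*}$ in the defining inverse system is an isomorphism; in particular, each canonical projection $p_x\colon \widehat\B(F)\to\widehat\B_{E_x}(F)$ is itself an isomorphism. Hence the content of the proposition reduces to checking that $\pi$ and $\chi$ are compatible with the transitions, so that they descend to maps on the limit.

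First I would show that the individual maps $\pi\colon\widehat\B_{E_x}(F)\to\B(F)$ assemble into a surjection $\pi\colon\widehat\B(F)\to\B(F)$. By the right square of Lemma~\ref{ktoksq}, applied to a covering $\Psi_{xy}\colon E_x\to E_y$ of $\mathrm{id}_F$, the induced map on $\B(F)$ is the identity, so $\pi\circ\Psi_{xy*}=\pi$ on $\widehat\B_{E_x}(F)$. Surjectivity is inherited from Corollary~\ref{BhatEandB} via any isomorphism $p_x$.

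Next I would assemble the $\chi_x\colon \widetilde{\mu_F}\to\widehat\B_{E_x}(F)$ into a single map $\chi\colon \widetilde{\mu_F}\to\widehat\B(F)$. Here $\widetilde{\mu_F}$ is the intrinsic group attached to $F$ (the unique nontrivial $\Z/2\Z$-extension of $\mu_F$ in odd characteristic, and $\mu_F$ itself in characteristic $2$). For each primitive $E_x$ one has a canonical isomorphism $\widetilde{\mu_F}\cong E_x(\mu_F)/2\Z$ from Corollary~\ref{BhatEandB}, and the left square of Lemma~\ref{ktoksq} intertwines these identifications under $\Psi_{xy*}$; the consistency of these identifications across the inverse system ultimately rests on the independence of $\Psi_{xy*}$ from the choice of covering (Theorem~\ref{isoonBhat}). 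Hence the $\chi_x$'s assemble into a well-defined $\chi$.

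Finally, exactness at $\widehat\B(F)$ is immediate from the corresponding exactness in Corollary~\ref{BhatEandB}, transported through any isomorphism $p_x$. The only nontrivial point is the bookkeeping in the second paragraph: verifying that the identifications $E_x(\mu_F)/2\Z\cong\widetilde{\mu_F}$ can be chosen consistently across the inverse system so that the $\chi_x$ really do assemble. Everything else is a formal consequence of the fact that all transitions are isomorphisms.
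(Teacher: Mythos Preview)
Your proposal is correct and is exactly the argument the paper has in mind; the paper's own proof is the single sentence ``This is an easy consequence of Corollary~\ref{BhatEandB},'' and you have simply unpacked that sentence. The key observation you isolate---that every transition map $\Psi_{xy*}$ is an isomorphism, so any single projection $p_x$ transports the exact sequence of Corollary~\ref{BhatEandB} to $\widehat\B(F)$---is all that is needed, and your remarks on compatibility of $\pi$ and $\chi$ with the transitions via Lemma~\ref{ktoksq} are the appropriate bookkeeping.
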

\begin{proof}
This is an easy consequence of Corollary~\ref{BhatEandB}.
\end{proof}

Note that the action of $\Aut(F)$ on $\widetilde{\mu_F}$ is through the quadratic character.


\subsection{Embeddings in $\C$ and the regulator}


Recall that $\widehat\B(\C)$ is the extended Bloch group associated to the extension of $\C^*$ given by the exponential map.  

Let $\sigma$ be an embedding of $F$ in $\C$. By Lemma~\ref{extmap}, each primitive extension $E_x$ admits a covering $\widehat\sigma_x\colon E_x\to \C$ of $\sigma$. 
\begin{lemma}\label{primextinj} The induced map $\widehat\sigma_{x*}\colon\widehat\Pre_{E_x}(F)\to\widehat\Pre(\C)$ restricts to an injection $E_x/2\Z\to\C/4\pi i\Z$. 
\end{lemma}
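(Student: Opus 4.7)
The plan is to translate the statement into concrete arithmetic using Lemma~\ref{ktoksq} and Remark~\ref{CstarandEmod2}, and then combine multiplicative independence of units with the fact that $|\mu_F|$ is even in characteristic zero.

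First, by Lemma~\ref{ktoksq} the composition $E_x/2\Z \xrightarrow{\chi}\widehat\Pre_{E_x}(F)\xrightarrow{\widehat\sigma_{x*}}\widehat\Pre(\C)$ lands in $\chi(\C/4\pi i\Z)$, and under the identifications from Remark~\ref{CstarandEmod2} it is the explicit map $e\mapsto k\widehat\sigma_x(e)\bmod 4\pi i\Z$, where $k\in\Z$ is defined by $\widehat\sigma_x(\iota_x(1))=2\pi i k$. Because $\chi\colon\C/4\pi i\Z\to\widehat\Pre(\C)$ is injective (its composition with the regulator is multiplication by $-\pi i$), it is enough to show this explicit linear map is injective; as a side benefit, this will imply injectivity of $\chi\colon E_x/2\Z\to\widehat\Pre_{E_x}(F)$ for primitive $E_x$, as foretold after Theorem~\ref{preblochrel}.

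The key arithmetic input is the determination of $k$. Since $E_{\mu_F,x}$ is free of rank one on a generator $\tilde x$ mapping to $x$ (Corollary~\ref{cond2cor}), the identity $\widehat\sigma_x(\tilde x)=2\pi i k/|\mu_F|$ holds, and its exponential must equal $\sigma(x)$, so $\gcd(k,|\mu_F|)=1$. Crucially, because $F$ embeds in $\C$ we have $\operatorname{char}F=0$, so $-1\in\mu_F$ and $|\mu_F|$ is even; hence $k$ is odd and $\gcd(k^2,2|\mu_F|)=1$.

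Now use freeness of $F^*/\mu_F$ (condition~(\ref{field1}) of a free field) to split $E_x=\Z\tilde x\oplus\bigoplus_i\Z e_i$ with $e_i$ a lift of a $\Z$-basis of $F^*/\mu_F$. For $e=n\tilde x+\sum_i a_i e_i$ in the kernel, the condition reads
\[
\frac{2\pi i\,nk^2}{|\mu_F|}+k\sum\nolimits_i a_i\ell_i\in 4\pi i\Z,
\]
where $\ell_i=\widehat\sigma_x(e_i)$ is a logarithm of $\sigma(\pi(e_i))$. Rearranging gives $\sum_i a_i\ell_i\in 2\pi i\Q$, so $\prod_i\sigma(\pi(e_i))^{a_i}$ is a root of unity in $\C$. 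Since $\sigma$ is an embedding, $\sigma^{-1}(\mu_\C)\cap F=\mu_F$, hence $\prod_i\pi(e_i)^{a_i}\in\mu_F$, and independence of the $\pi(e_i)$ in $F^*/\mu_F$ forces $a_i=0$ for all $i$. The equation collapses to $nk^2\in 2|\mu_F|\Z$, and coprimality of $k^2$ and $2|\mu_F|$ yields $n\in 2|\mu_F|\Z$, i.e.\ $e\in\iota_x(2\Z)$, so $e=0$ in $E_x/2\Z$. The one slightly delicate step is the multiplicative-independence argument combined with the parity observation on $|\mu_F|$; without characteristic zero, the map would pick up precisely the pathology that even values of $k$ would introduce.
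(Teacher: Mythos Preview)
Your proof is correct and follows the same approach as the paper's own proof, which is extremely terse: the paper simply notes that $\widehat\sigma_x\colon E_x\to\C$ is injective, that $\widehat\sigma_x(1)=2\pi i k$ with $k$ coprime to $|\mu_F|$, and asserts that the result then follows from Lemma~\ref{ktoksq}. You have supplied the details that the paper leaves implicit --- in particular the parity observation that $|\mu_F|$ is even in characteristic~$0$, which is what forces $k$ odd and hence $\gcd(k^2,2|\mu_F|)=1$; without this the argument would not close. Your explicit use of the splitting $E_x=\Z\tilde x\oplus\bigoplus_i\Z e_i$ coming from freeness of $F^*/\mu_F$ makes the reduction to the torsion part transparent, whereas the paper absorbs this into the bare assertion that $\widehat\sigma_x$ is injective.
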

\begin{proof}
Clearly, $\widehat\sigma_x\colon E_x\to\C$ is injective. Since $\widehat\sigma_x$ must take $1$ to $2\pi i k$, where $k$ is relatively prime to $\vert\mu_F\vert$, the result follows from Lemma~\ref{ktoksq}.
\end{proof}

\begin{cor}\label{primextinjcor}
Let $F$ be a free field admitting an embedding in $\C$. The map $\chi\colon E/2\Z\to\widehat\Pre_E(F)$ is injective for all primitive extensions $E$.
\end{cor}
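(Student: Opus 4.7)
The plan is to deduce this from the analogous injectivity already established over $\C$ using naturality of $\chi$. Since $F$ is free, $\mu_F$ is finite, and Corollary~\ref{cond2cor} tells us every primitive extension of $F^*$ has the form $E_x$ for some primitive root of unity $x\in F$; so it suffices to prove the statement for $E=E_x$.

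Next, fix the embedding $\sigma\colon F\hookrightarrow\C$ provided by hypothesis and apply Lemma~\ref{extmap} to obtain a covering $\widehat\sigma_x\colon E_x\to\C$ of $\sigma$, where $\C$ is equipped with the exponential extension of Example~\ref{extension}. The functoriality square of Lemma~\ref{ktoksq} then produces the commutative diagram
\[\xymatrix{E_x/2\Z\ar[r]^-\chi\ar[d]^{\widehat\sigma_{x*}}&\widehat\Pre_{E_x}(F)\ar[d]^{\widehat\sigma_{x*}}\\ \C/4\pi i\Z\ar[r]^-\chi&\widehat\Pre(\C).}\]
The bottom horizontal $\chi$ is injective by Theorem~\ref{bigdiag} together with the identification of $\C^*$ with $\C/4\pi i\Z$ from Remark~\ref{CstarandEmod2}, while the left vertical map is injective by Lemma~\ref{primextinj}.

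Now a one-line diagram chase finishes the argument: the composition $\chi\circ\widehat\sigma_{x*}$ down the left and across the bottom is injective, so the equal composition $\widehat\sigma_{x*}\circ\chi$ across the top and down the right is also injective, forcing the top $\chi$ to be injective. There is no real obstacle; the substantive content sits in Lemma~\ref{primextinj}, and the corollary itself is a formal consequence once that lemma and the injectivity of $\chi$ over $\C$ are in place.
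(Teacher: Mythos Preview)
Your proof is correct and follows essentially the same approach as the paper: reduce to $E=E_x$, use the naturality square from Lemma~\ref{ktoksq} to factor $\widehat\sigma_{x*}\circ\chi$ as $\chi_\C\circ\widehat\sigma_{x*}$, and then invoke Lemma~\ref{primextinj} for injectivity of $\widehat\sigma_{x*}$ on $E_x/2\Z$ together with Theorem~\ref{bigdiag} (via Remark~\ref{CstarandEmod2}) for injectivity of $\chi_\C$. The only difference is cosmetic---you draw the square and do a diagram chase, whereas the paper writes the same argument as an equality of compositions.
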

\begin{proof}
We may assume that $E=E_x$. It is enough to prove that $\widehat\sigma_{x*}\circ\chi\colon E_x/2\Z\to\widehat\Pre(\C)$ is injective. By Lemma~\ref{ktoksq}, $\widehat\sigma_{x*}\circ\chi=\chi_\C\circ\widehat\sigma_{x*}$, where $\chi_\C$ denotes the map $\chi\colon\C/4\pi i\Z\to\widehat\Pre(\C)$. By Corollary~\ref{primextinjcor} (and Remark~\ref{CstarandEmod2}), this is a composition of injective maps, hence injective.
\end{proof}
Since $\widehat\sigma_x$ and $\widehat\sigma_y\circ\Psi_{xy}$ both cover $\sigma$, the induced maps satisfy $\widehat\sigma_{x*}=\widehat\sigma_{y*}\circ\Psi_{xy*}$, and we obtain a map
\begin{equation}
\sigma_*\colon\widehat\B(F)\to\widehat\B(\C)
\end{equation}
depending only on $\sigma$. The following is a simple consequence of Lemma~\ref{primextinj} and Corollary~\ref{primextinjcor}.
\begin{prop}\label{injectiveontor}
Let $F$ be a free field admitting an embedding $\sigma$ in $\C$. The map $\sigma_*\colon\widehat\B(F)\to\widehat\B(\C)$ restricts to an injection $\widetilde{\mu_F}\to\mu_\C$. Furthermore, the sequence
\begin{equation}\label{exactforfreefields} 
\xymatrix{0\ar[r]&{\widetilde{\mu_F}}\ar[r]^-\chi&{\widehat\B(F)}\ar[r]^-\pi&{\B(F)}\ar[r]&0.}
\end{equation}
is exact.\qed
\end{prop}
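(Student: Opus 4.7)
The plan is to reduce both claims to injectivity facts already established at the level of primitive extensions and inside $\widehat{\B}(\C)$, threading everything through the covering $\widehat{\sigma}_x\colon E_x\to\C$ attached to $\sigma$ and a choice of primitive root of unity $x\in F$.

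First, I would invoke Lemma \ref{ktoksq} for the covering $\widehat{\sigma}_x$. Under the identification $\C^*\cong\C/4\pi i\Z$ of Remark \ref{CstarandEmod2}, this gives the naturality $\widehat{\sigma}_{x*}\circ\chi_{E_x}=\chi_{\C}\circ\widehat{\sigma}_{x*}$ on $E_x/2\Z$, where I write $\chi_{E_x}$ and $\chi_{\C}$ for the two instances of $\chi$. The subgroup $\widetilde{\mu_F}=E_x(\mu_F)/2\Z$ sits naturally inside $E_x/2\Z$, and $\widehat{\sigma}_x$ sends $E_x(\mu_F)\subset E_x$ into the preimage $\{w\in\C\mid\exp(w)\in\sigma(\mu_F)\}$ of $\mu_{\C}\subset\C^*$. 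Therefore $\widehat{\sigma}_{x*}$ restricts to a map $\widetilde{\mu_F}\to\mu_{\C}$, and by Lemma \ref{primextinj} this restriction is injective.

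Next, by the construction of $\widehat{\B}(F)$ as an inverse limit and of $\sigma_*$ as the induced map on the $E_x$--component, the composite $\widetilde{\mu_F}\xrightarrow{\chi}\widehat{\B}(F)\xrightarrow{\sigma_*}\widehat{\B}(\C)$ coincides with $\chi_{\C}\circ\widehat{\sigma}_{x*}|_{\widetilde{\mu_F}}$ by the naturality above. Theorem \ref{bigdiag} asserts that $\chi_{\C}\colon\mu_{\C}\to\widehat{\B}(\C)$ is injective, so the composite $\sigma_*\circ\chi$ is injective with image contained in $\mu_{\C}$. This simultaneously supplies the first claim (that $\sigma_*$ restricts to an injection $\widetilde{\mu_F}\to\mu_{\C}$) and forces $\chi\colon\widetilde{\mu_F}\to\widehat{\B}(F)$ itself to be injective, since a left-invertible map is injective.

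Finally, the exact sequence is obtained by augmenting Proposition \ref{almostexact} with this injectivity at $\widetilde{\mu_F}$. There is no real obstacle; the only point requiring care is to keep the identification of $\mu_{\C}\subset\C^*$ with its image in $\C/4\pi i\Z$ under Remark \ref{CstarandEmod2} consistent across Lemmas \ref{primextinj} and \ref{ktoksq}. Once these identifications are aligned, the argument is a purely formal composition of injective maps.
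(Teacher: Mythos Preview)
Your argument is correct and matches the paper's approach: the paper marks the proposition with a \qed, stating it as a simple consequence of Lemma~\ref{primextinj} and Corollary~\ref{primextinjcor}, and what you have done is unfold Corollary~\ref{primextinjcor} inline using exactly its ingredients (Lemma~\ref{ktoksq}, Lemma~\ref{primextinj}, and the injectivity of $\chi_\C$ from Theorem~\ref{bigdiag}). One minor terminological slip: you say ``left-invertible,'' but you only need---and only use---the elementary fact that if $g\circ f$ is injective then $f$ is injective.
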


\subsubsection{The regulator} A number field $F$ of type $[r_1,r_2]$ has $r_1$ real embeddings and $r_2$ pairs of complex embeddings. 
The regulator map~\eqref{regulator} is equivariant with respect to 
the action on $\widehat\B(\C)$ by complex conjugation given by $[z;p,q]\mapsto [\bar z;-p,-q]$.
We therefore obtain a regulator 
\begin{equation}\label{regonBhat}\widehat\B(F)\to (\R/4\pi^2\Z)^{r_1}\oplus (\C/4\pi^2\Z)^{r_2}.\end{equation}
\begin{ex}\label{Rexample} Let $x$ be a root of $p(x)=x^4-x^3+2x^2-2x+1$, and consider the number field $F=\Q(x)$. One can check that $\mu_F$ has order $6$ and is generated by $w=x^3+x$. Let $E$ be the primitive extension corresponding to $w$ and let $\tilde w$ be the generator of $E_{\mu_F}$. Let $u=-x^3-2x+1$ and let $v=x^2-x+1$. The relations
\[1-u=u^2 w^4,\qquad v=w^3 u^{-2},\qquad 1-v=u^{-3} w,\]
are easily verified, and it follows that $\alpha=[u]+2[v]\in\B(F)$. Let $\tilde u$ be a lift of $u$, and consider the lift
\begin{equation}\tilde\alpha=(\tilde u,2\tilde u+4\tilde w)+2(-2\tilde u+3\tilde w,-3\tilde u+\tilde w)-3\chi(\tilde u)\in\widehat \Pre_E(F)\end{equation}
of $\alpha$. One easily checks that $\widehat\nu(\tilde \alpha)=0$, so $\tilde \alpha$ is in $\widehat \B_E(F)$. One can check, e.g.~using Lemma~\ref{Rformula}, that $\tilde\alpha$ is independent of the particular choice of $\tilde u$. 

Let $z$ be the root of $p$ given by $z=-0.1217\ldots + i1.3066\ldots$, and let $\sigma$ denote the corresponding embedding. 
Then $\sigma(w)=\exp(-\pi i/3)$. Let $\widehat\sigma\colon E\to\C$ be a covering of $\sigma$. Letting $\log$ denote the principal branch of logarithm, we may assume that 
\[\widehat\sigma(\tilde w)=-\pi i/3,\qquad\widehat \sigma(\tilde u)=\log(\sigma(u))=-0.2717\ldots-i 0.6165\ldots.\] 
Using Lemma~\ref{ktoksq}, we see that $\widehat\sigma_*$ takes $\chi(\tilde u)$ to $-\chi(\log(\sigma(u)))$. We now have
\begin{multline*}\sigma_{*}(\alpha)=(-0.2717\ldots-i 0.6165\ldots,-0.5435\ldots-i 5.4218\ldots)\\+2(0.5435-i 1.9085\ldots,0.8153\ldots+i 0.8023\ldots)\\+3\chi(-0.2717\ldots-i 0.6165\ldots).\end{multline*}
Using \eqref{regulator} or \eqref{Zagierregulator}, (and Remark~\ref{CstarandEmod2}), we obtain
\begin{equation*}
R(\sigma_{*}(\alpha))= -7.4532\ldots-i 2.3126\ldots\in \C/4\pi^2\Z. 
\end{equation*}
\end{ex}

\begin{remark} Examples like the above can be produced in abundance using computer software like PARI/GP.
\end{remark}

\section{The other version of the extended Bloch group}\label{BhatPSL}
As mentioned in the introduction there are two versions, $\widehat\B(\C)_{\SL}$ and $\widehat\B(\C)_{\PSL}$, of the extended Bloch group. They are isomorphic to $H_3(\SL(2,\C))$ and $H_3(\PSL(2,\C))$, respectively. In this section we define the algebraic version of $\widehat\B(\C)_{\PSL}$, and discuss its relationship with hyperbolic geometry.
We stress that this version is only defined when the extension $E$ of $F$ is $2$--primitive.

Let $F$ be a field and let $E$ be a $2$--primitive extension of $F^*$ by $\Z$. By Corollary~\ref{Exteq}, $1\in\Z\subset E$ is uniquely two-divisible, so $\frac{1}{2}\in E$ is well defined. Consider the set of \emph{odd flattenings}
\begin{equation}
\overline{F}_E=\big\{(e,f)\in E\times E\bigm\vert\pm\pi(e)\pm\pi(f)=1\in F\big\},
\end{equation}
Since knowledge of $z$ and $1-z$ up to a sign determines $z$, we have a map $\pi\colon\overline F_E\to F\setminus\{0,1\}$.
Define $\overline{\FT}_E$ as in Definition~\ref{fiveeqdef}, and
define $\widehat{\Pre}_E(F)_{\PSL}$ to be the abelian group generated by $\overline{F}_E$ subject to the relations
\begin{gather*}
\sum_{i=0}^4(-1)^i(e_i,f_i)=0\text{ for }\big((e_0,f_0),\dots(e_4,f_4)\big)\in\overline{\FT}_E\\
(e+\frac{1}{2},f+\frac{1}{2})+(e,f)=(e+\frac{1}{2},f)+(e,f+\frac{1}{2}).
\end{gather*}
The second relation is the analog of the \emph{transfer relation}; see Goette--Zickert~\cite{GZ} or Neumann~\cite[Proposition~7.2]{Neumann}. 
The extended Bloch group $\widehat\B_E(F)_{\PSL}$ is defined as in Definition~\ref{extblochoffield}.
Note that $\pi\colon\overline F_E\to F\setminus\{0,1\}$ induces maps from the extended groups $\widehat{\Pre}_E(F)_{\PSL}$ and $\widehat\B_E(F)_{\PSL}$ to the classical ones.

For $a\in E\setminus\Z$ let $\overline\chi(a)=(e,f+1/2)-(e,f)$, where $(e,f)$ is any flattening of $\pi(a)$. The analog of Lemma~\ref{GZ3.3} holds, proving independence of $f$, and independence of $e$ follows from the transfer relation. 
As in Corollary~\ref{extofchi}, $\overline\chi$ extends to a homomorphism $\overline\chi\colon E\to\widehat{\Pre}_E(F)_{\PSL}$, and a computation as in \eqref{2chi0} shows that $\overline\chi(1)=2\overline\chi(\frac{1}{2})=0$.

\begin{lemma}\label{SLPSL}There is a commutative diagram of exact sequences.
\begin{equation}
\cxymatrix{{{E/2\Z}\ar[r]^-\chi\ar[d]^2&{\widehat{\Pre}_E(F)}\ar[d]^p\ar[r]&{\Pre(F)}\ar[r]\ar@{=}[d]&0\\{E/\Z}\ar[r]^-{\overline\chi}&{\widehat\Pre_E(F)_{\PSL}}\ar[r]&{\Pre(F)}\ar[r]&0,}}
\end{equation}
\end{lemma}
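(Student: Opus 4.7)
The plan is to verify the diagram in three stages: define the vertical maps, check commutativity of both squares, and establish exactness of the bottom row (exactness of the top row is already Theorem~\ref{preblochrel}).

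First I would construct $p$. Since $\widehat F_E\subseteq\overline F_E$ and the algebraic lifted five-term relations $\widehat{\FT}_E$ sit inside $\overline{\FT}_E$, the generators and defining relations of $\widehat\Pre_E(F)$ map canonically into $\widehat\Pre_E(F)_{\PSL}$, yielding the required $p$. The left vertical map is multiplication by $2$, well-defined from $E/2\Z$ to $E/\Z$ because $2(2\Z)\subseteq\Z$; the right vertical map is the identity.

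Next I would check commutativity. The right square is tautological. For the left square one needs $p(\chi(e))=\overline\chi(2e)$. Unwinding definitions, $p(\chi(e))=(e,f+1)-(e,f)$ in the PSL pre-Bloch group. The key input is the half-integer analog of Lemma~\ref{GZ3.3}: its proof carries over to $\overline{\FT}_E$ because the shift element $v_{1/2}$ still preserves the odd-flattening condition (as $\pi(\tfrac12)=-1$ and $\overline F_E$ is closed under sign-changes in $\pi$), and the defining equations \eqref{fiveeq} are purely algebraic. This analog gives $(e,f+1)-(e,f+\tfrac12)=(e,f+\tfrac12)-(e,f)=\overline\chi(e)$, hence $p(\chi(e))=2\overline\chi(e)=\overline\chi(2e)$.

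For exactness of the bottom row I would mimic Theorem~\ref{preblochrel}. Surjectivity of $\pi$ and $\pi\circ\overline\chi=0$ are immediate. To prove $\ker\pi\subseteq\Image\overline\chi$, note that $\ker\pi$ is generated by differences $(e+p,f+q)-(e,f)$ where both tuples are odd flattenings of the same element of $F\setminus\{0,1\}$; here $p,q\in\tfrac12\Z$, since lifts of odd flattenings can now differ by half-integers in each coordinate. The half-integer analogs of \eqref{chi1} and \eqref{chi2}, the latter using the transfer relation to allow $\tfrac12$-shifts in the first coordinate, then rewrite this difference as $\overline\chi(2qe-2pf+2pq)=\overline\chi\bigl(2(qe-pf)\bigr)$ (using $\overline\chi(1)=0$), which lies in $\Image\overline\chi$.

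The main obstacle will be confirming the half-integer analogs of Lemma~\ref{GZ3.3}, since these underpin both the left-square commutativity and the generation of $\ker\pi$ by $\Image\overline\chi$. The $f$-direction analog is a direct transcription of the original proof into the PSL setting; the $e$-direction analog is where the transfer relation enters essentially, and is precisely the reason this relation is built into the definition of $\widehat\Pre_E(F)_{\PSL}$.
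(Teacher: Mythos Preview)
Your proposal is correct and follows essentially the same route as the paper: exactness of the bottom row is proved by transcribing the argument of Theorem~\ref{preblochrel} to the odd-flattening setting, and commutativity of the left square comes from splitting $(e,f+1)-(e,f)$ into two half-steps, each equal to $\overline\chi(e)$. One small slip: your final simplification $\overline\chi(2qe-2pf+2pq)=\overline\chi\bigl(2(qe-pf)\bigr)$ need not hold when $p,q$ are both odd multiples of $\tfrac12$ (then $2pq\in\tfrac12\Z\setminus\Z$, and only $\overline\chi(1)=0$ is known, not $\overline\chi(\tfrac12)=0$); but this is harmless, since the unsimplified expression already lies in $\Image\overline\chi$.
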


\begin{proof} Exactness of the bottom sequence is proved as in Theorem~\ref{preblochrel}. For commutativity of the left diagram, note that in $\widehat\Pre_E(F)_{\PSL}$, we have
\begin{equation} (e,f+1)-(e,f)=(e,f+1)-(e,f+\frac{1}{2})+(e,f+\frac{1}{2})-(e,f)=2\chi(e)=\chi(2e).\end{equation}
This proves the result.
\end{proof}

It follows from Proposition~\ref{Emuexactsequence} that we have an exact sequence
\begin{equation}
\xymatrix{{\mu_F}\ar[r]^-{\overline\chi}&{\widehat\B_E(F)_{\PSL}}\ar[r]&{\B(F)}\ar[r]&0}
\end{equation} 

If $F$ is a free field, we can form $\widehat\B(F)_{\PSL}$ as in Section~\ref{BhatF}. 
An embedding $F\to\C$ induces a map $\widehat\B(F)_{\PSL}\to\widehat\B(\C)_{\PSL}$ restricting to an injection $\mu_F\to\mu_{\C}$. In particular, $\overline\chi$ is injective, and it follows from Lemma~\ref{SLPSL} that there is an exact sequence
\begin{equation}\label{hatandbar}
\xymatrix{0\ar[r]&{\Z/4\Z}\ar[r]&{\widehat\B(F)}\ar[r]^-p&{\widehat{\B}(F)_{\PSL}}\ar[r]&{\Z/2\Z}\ar[r]&0.}
\end{equation}

\begin{remark}\label{PSLBhatlift} One can concretely determine if an element $\alpha\in\widehat\B_E(F)_{\PSL}$ lifts: Pick any element $\tau\in\widehat\Pre_E(F)$ lifting $\pi(\alpha)\in\B(F)$. Then $x=\alpha-p\tau\in\widehat\Pre_E(F)_{\PSL}$ is in $E/\Z=F^*$, and it follows from Lemma~\ref{SLPSL} that $\alpha$ lifts if and only if $x$ is a square in $F^*$.
\end{remark}

\subsection{Parabolic representations}
The result below generalizes the main result in Zickert~\cite{Zickert}.
\begin{thm}\label{Zickertthm}
Let $M$ be a tame manifold, and let $F$ be a free field. A parabolic representation $\rho\colon\pi_1(M)\to\PSL(2,F)$ defines a fundamental class $[\rho]\in \widehat\B(F)_{\PSL}$. If $\sigma\colon F\to \C$ is an embedding, $\sigma_*([\rho])=[\sigma\circ\rho]\in \widehat\B(\C)_{\PSL}$.
\end{thm}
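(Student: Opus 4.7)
The plan is to adapt the construction of Zickert \cite{Zickert} essentially verbatim, replacing the complex exponential $\C \to \C^*$ by the homomorphism $\pi\colon E \to F^*$ of a $2$-primitive extension, and verifying that every step of that construction depends only on the formal properties of $\pi$ that are axiomatized in Section \ref{BhatE}. Fix an ideal triangulation $\mathcal T$ of $M$ with ideal vertices at the cusps, adding drilled ideal vertices if $M$ is closed. Develop $\mathcal T$ to the universal cover via $\rho$; parabolicity guarantees that each ideal vertex lands on a parabolic fixed point, an element of $F \cup \{\infty\}$. Fix a primitive extension $E = E_x$ and choose a $\rho$-equivariant decoration assigning to each ideal vertex a nonzero vector in $F^2$ (up to sign) spanning the corresponding invariant line. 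Each oriented edge $[v_i, v_j]$ of the developed triangulation acquires a length $c_{ij} \in F^*$, well defined up to sign, and since $F$ is free these can be consistently lifted to $\tilde c_{ij} \in E$.

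By the formulas of \cite{Zickert}, each ideal tetrahedron $\Delta$ with lifted edge parameters $\tilde c_{ij}$ determines an odd flattening $(e_\Delta, f_\Delta) \in \overline F_E$ whose image under $\pi$ is the cross-ratio $z_\Delta \in F \setminus \{0,1\}$. Set
\[
[\rho] := \sum_{\Delta \in \mathcal T} \varepsilon_\Delta\, (e_\Delta, f_\Delta) \in \widehat \Pre_E(F)_{\PSL},
\]
with $\varepsilon_\Delta \in \{\pm 1\}$ recording orientation. The cocycle relation satisfied by the $\tilde c_{ij}$ around each interior edge forces $\widehat \nu([\rho]) = 0 \in \wedge^2(E)$, so $[\rho] \in \widehat \B_E(F)_{\PSL}$. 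Independence of the decoration follows from the fact that rescaling the vector at a single vertex shifts the incident flattenings by an $\overline \chi$-term whose coefficient is a boundary in the link cycle of that vertex. Independence of the triangulation is proved by checking invariance under the $2$-$3$ and $0$-$2$ Pachner moves: the $2$-$3$ move is exactly the lifted five-term relation of Definition \ref{fiveeqdef}, while the $0$-$2$ move is absorbed by the transfer relation together with $\overline \chi(1) = 0$. The class then patches across primitive extensions $E_x$ via the isomorphisms of Corollary \ref{extmap} to yield $[\rho] \in \widehat \B(F)_{\PSL}$. For an embedding $\sigma \colon F \to \C$, choose a covering $\widehat \sigma \colon E \to \C$; pushing a decoration for $\rho$ forward by $\sigma$ gives a decoration for $\sigma \circ \rho$ whose lifts are the $\widehat \sigma(\tilde c_{ij})$, so the construction commutes with $\widehat \sigma_*$, giving $\sigma_*([\rho]) = [\sigma \circ \rho]$.

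The main obstacle is the combinatorial verification of invariance under change of decoration and under Pachner moves. Fortunately, Zickert \cite{Zickert} carried out this bookkeeping over $\C$, and the key observation is that every identity in that verification is in fact an identity between elements of the cover $E$, not one that uses complex analysis. The defining relations of $\widehat \Pre_E(F)_{\PSL}$ and $\widehat \B_E(F)_{\PSL}$ in Section \ref{BhatPSL} were set up with exactly this transfer in mind, so the argument goes through formally; the one slightly delicate point is using the $\PSL$ transfer relation and $\overline \chi(1) = 0$ to handle the sign ambiguities in edge parameters that $\C$-analytic lifts resolve for free.
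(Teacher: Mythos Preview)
Your proposal is correct and follows essentially the same approach as the paper: both adapt the construction of \cite{Zickert} by replacing the complex logarithm with a section of $\pi\colon E\to F^*$ and observing that every identity used there is purely algebraic. The only presentational difference is that the paper routes the argument through the homomorphism $H_3(\PSL(2,F),P)\to\widehat\B(F)_{\PSL}$ and the relative fundamental class of \cite[Theorem~5.13]{Zickert}, so that independence of triangulation is automatic, whereas you propose to verify it directly via Pachner moves; these are two packagings of the same computation.
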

\begin{proof}
In Zickert~\cite{Zickert} we defined a homomorphism $H_3(\PSL(2,\C),P)\to\widehat\B(\C)_{\PSL}$, where $P=\{\left(\begin{smallmatrix}1&*\\0&1\end{smallmatrix}\right)\}$. This map can be defined over any free field by replacing the logarithms in Zickert~\cite[(3.6)]{Zickert} by a lift of $E\to F^*$. By Zickert~\cite[Theorem~5.13]{Zickert} a decorated parabolic representation $\rho\colon\pi_1(M)\to\PSL(2,F)$ of $M$ has a fundamental class $H_3(\PSL(2,F),P)$, and as in Zickert~\cite[Theorem~6.10]{Zickert}, the image of the fundamental class in $\widehat\B(F)_{\PSL}$ is independent of the decoration. This proves the first statement. The second statement is an immediate consequence of the definition.
\end{proof}

\begin{remark}
We stress that $\widehat\B(F)_{\PSL}$ is \emph{not} isomorphic to $H_3(\PSL(2,F))$ in general. 
\end{remark}

\section{Ideal cochains and flattenings of $3$--cycles}\label{FlatSection}
Fix a field $F$ and a primitive extension $E$ of $F^*$ by $\Z$. By a \emph{simplex} we will always mean a standard simplex together with a fixed vertex ordering. Unless otherwise specified, a simplex means a $3$--simplex.

\begin{defn}\label{flatsimplex}
An \emph{(algebraic) flattening} of a simplex $\Delta$ is an association of an algebraic flattening $(e,f)\in \widehat F_E$ to $\Delta$. If $(e,f)$ is a flattening of $z\in F\setminus\{0,1\}$, we refer to $z$ as the \emph{cross-ratio} of the flattened simplex.
\end{defn}

\begin{remark}
Definition~\ref{flatsimplex} is a generalization of \emph{even flattenings}, i.e.~flattenings $[z;p,q]$, with $p$ and $q$ even. Neumann~\cite{Neumann} also allows odd values of $p$ and $q$.
\end{remark}

We will associate elements in $E$ to edges of a flattened simplex as indicated in Figure~\ref{logpar}. We will refer to these elements as \emph{log-parameters}.

\begin{defn}\label{3cycledef}
An \emph{(ordered, oriented) $3$--cycle} is a space $K$ obtained from a collection of simplices by gluing together pairs of faces using simplicial attaching maps preserving the vertex orderings. If all faces have been glued, we say that $K$ is \emph{closed}. We assume that the manifold $K_0$ with boundary (and corners) obtained by removing disjoint regular neighborhoods of the $0$--cells is oriented. If $\Delta_i$ is a simplex in $K$, we let $\varepsilon_i$ be a sign encoding whether or not the orientation of $\Delta_i$ coming from the vertex ordering agrees with the orientation of $K_0$. 
\end{defn}
\begin{remark}Neumann~\cite{Neumann} only considers closed $3$--cycles. With our definition, a single simplex is a $3$--cycle.\end{remark}


The definition below is the analog of Neumann~\cite[Definition 4.4]{Neumann}, which the reader may consult for further details.
\begin{defn}
Let $K$ be a closed $3$--cycle. A \emph{flattening} of $K$ is a choice of flattening of each simplex of $K$ such that the total log-parameter (summed according to the sign conventions of Neumann~\cite[Definition~4.3]{Neumann})
around each edge is zero. If the total log-parameter along any normal curve in the star of each zero-cell is zero, it is called a \emph{strong flattening}.
\end{defn}

\begin{remark}\label{noparity}
We do not need any conditions on the parity. 
This is because the parity condition is automatically satisfied for even flattenings. The proof of this fact is identical to the proof of Neumann~\cite[Proposition 5.3]{Neumann}. 
\end{remark}

If $K$ is a $3$--cycle, and $G$ is an abelian group, we let $C^1(K;G)$ denote the set of cellular $1$--cochains in $K$ with values in $G$. A cochain $c\in C^1(K;G)$ naturally associates to each edge of each simplex of $K$ an element in $G$. Edges that are identified in $K$ acquire the same labeling. If $\Delta$ is a simplex of $K$, we let $c_{ij}(\Delta)$ denote the labeling of the edge joining the vertices $i$ and $j$ in $\Delta$, see Figure~\ref{edgepar}. 

\begin{figure}[ht]
\centering
\begin{minipage}[t]{0.45\textwidth}
\centering
\includegraphics[width=4.8cm]{K3Bhat.1}
\caption{Associating log-parameters to edges of a flattened simplex.}\label{logpar}
\end{minipage}
\quad
\begin{minipage}[t]{0.4\textwidth}
\centering
\includegraphics[width=4.8cm]{K3Bhat.2}
\caption{Edge labelings arising from a cochain.}\label{edgepar}
\end{minipage}
\end{figure}

\begin{defn}\label{idealcochain} Let $K$ be a $3$--cycle. A cochain $c\in C^1(K;F^*)$ is called an \emph{ideal cochain} if for each simplex $\Delta_i$ in $K$, there is an element $z_i\in F^*\setminus\{0,1\}$, such that the associated labeling of edges satisfies 
\begin{equation}\label{cij}
\frac{c_{03}^ic_{12}^i}{c_{02}^ic_{13}^i}=z_i,\qquad \frac{c_{01}^ic_{23}^i}{c_{02}^ic_{13}^i}=1-z_i.
\end{equation}
An ideal cochain thus associates cross-ratios to each simplex. 
\end{defn}

\begin{remark} Not all $3$--cycles admit ideal cochains. The fact that $3$--cycles admitting ideal cochains exist follows from Remark~\ref{idealcochainsexist} below.
\end{remark}

We wish to prove that each lift $\tilde c\in C^1(K;E)$ of an ideal cochain $c$ determines an element in $\widehat\sigma(\tilde c)\in\widehat\Pre_E(F)$ such that if $K$ is closed, $\widehat\sigma(\tilde c)$ is in $\widehat\B_E(F)$ and is independent of the choice of lift. In other words, an ideal cochain on a closed $3$--cycle determines an element in $\widehat\B_E(F)$.

Let $I_n$ be the free abelian group on cochains $\tilde c\in C^1(\Delta^n;E)$ on an $n$--simplex $\Delta^n$, whose restriction to each $3$--dimensional face is the lift of an ideal cochain.
The usual boundary map induces boundary maps $\partial\colon I_n\to I_{n-1}$, making $I_*$ into a chain complex.
A lift $\tilde c$ of an ideal cochain $c$ on $K$ determines an element in $I_3$ given by $\sum\varepsilon_i\tilde c^i$. We may thus regard lifts of ideal cochains as elements in $I_3$. Note that if $K$ is closed, $\tilde c$ is a cycle, i.e.~$\partial \tilde c=0\in I_2$.

Consider the maps $\widehat\sigma\colon I_3\to \Z[\widehat F_E]$ and $\mu\colon I_2\to\wedge^2(E)$ defined on generators by 
\begin{gather}
\label{cijflat}
\widehat\sigma(\tilde c)=(\tilde c_{03}+\tilde c_{12}-\tilde c_{02}-\tilde c_{13},\tilde c_{01}+\tilde c_{23}-\tilde c_{02}-\tilde c_{13}),\\
\mu(\tilde c)=-\tilde c_{01}\wedge\tilde c_{02}+\tilde c_{01}\wedge\tilde c_{12}-\tilde c_{02}\wedge\tilde c_{12}+\tilde c_{02}\wedge\tilde c_{02}.
\end{gather}

\begin{lemma}\label{I3Bhat}
There is a commutative diagram 
\begin{equation}
\cxymatrix{{I_4\ar[r]^-\partial\ar[d]^-{\widehat\sigma}&I_3\ar[r]^-\partial\ar[d]^-{\widehat\sigma}&I_2\ar[d]^-\mu\\{\Z[\widehat{\FT}_E]}\ar[r]^-{\widehat\rho}&{\Z[\widehat F_E]}\ar[r]^-{\widehat\nu}&\wedge^2(E).}}
\end{equation}

\end{lemma}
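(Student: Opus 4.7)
The plan is to verify each square of the diagram independently by direct calculation on a single simplex generator, reducing everything to identities among the ten edge labels $\tilde c_{ij}$ (and extending by $\Z$--linearity).

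For the right square on $I_3$, I would first confirm that $\widehat\sigma(\tilde c)=(e,f)$ actually lies in $\widehat F_E$: with $e=\tilde c_{03}+\tilde c_{12}-\tilde c_{02}-\tilde c_{13}$ and $f=\tilde c_{01}+\tilde c_{23}-\tilde c_{02}-\tilde c_{13}$, the ideal cochain equations \eqref{cij} give $\pi(e)=z$ and $\pi(f)=1-z$, so $\pi(e)+\pi(f)=1$. Then I would expand $\widehat\nu(\widehat\sigma(\tilde c))=e\wedge f$ into a sum of sixteen wedge products, and separately expand $\mu(\partial\tilde c)=\sum_{i=0}^3(-1)^i\mu(d_i\tilde c)$ by applying the definition of $\mu$ to each $2$-face after re-indexing its three vertices as $(0,1,2)$. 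A term-by-term comparison will show that the two expressions coincide up to the term $2\tilde c_{02}\wedge\tilde c_{02}$, which is zero in $\wedge^2(E)$.

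For the left square on $I_4$, I would apply formula \eqref{cijflat} to each $3$-face $d_i\tilde c$ of the $4$-simplex (after re-indexing the remaining four vertices in order) to obtain explicit expressions for the five pairs $(e_i,f_i)=\widehat\sigma(d_i\tilde c)$ as alternating sums of four edge labels $\tilde c_{jk}$ with $0\le j<k\le 4$. A direct termwise subtraction then verifies each of the five identities of Definition~\ref{fiveeqdef}, for instance $e_1-e_0=\tilde c_{04}+\tilde c_{13}-\tilde c_{03}-\tilde c_{14}=e_2$, and similarly for the other four. This places the tuple $\bigl((e_0,f_0),\dots,(e_4,f_4)\bigr)$ in $\widehat{\FT}_E$, and since both routes around the square compute $\sum_{i=0}^4(-1)^i(e_i,f_i)$, commutativity follows.

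The main obstacle is purely notational: tracking the ten edge labels of the $4$-simplex through the five different face re-indexings, and keeping signs straight through the antisymmetry of the wedge product. Conceptually both verifications are entirely mechanical once the re-indexing conventions are fixed and the relation $2a\wedge a=0$ is used in the right-square calculation.
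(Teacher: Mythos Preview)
Your proposal is correct and follows essentially the same approach as the paper: verify the five identities of Definition~\ref{fiveeqdef} directly for the left square (the paper writes out exactly your $e_1-e_0=e_2$ computation), and expand both routes of the right square term by term. One small correction: the two-torsion contribution is not just $2\tilde c_{02}\wedge\tilde c_{02}$ but rather $\tilde c_{02}\wedge\tilde c_{02}+\tilde c_{13}\wedge\tilde c_{13}$ on each side (with two further diagonal terms cancelling), and these match directly rather than vanishing---you will see this when you carry out the expansion.
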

\begin{proof}
Let $\tilde c\in I_4$ be a generator, and suppose $\widehat\sigma\circ\partial(\tilde c)=\sum (-1)^i(e_i,f_i)$. We must prove that the flattenings $(e_i,f_i)$ satisfy the five equations of Definition~\ref{fiveeqdef}. 
We check the first equation, and leave the verification of the four others to the reader. By~\eqref{cijflat} we have
\begin{gather*}
e_0=\tilde c_{14}+\tilde c_{23}-\tilde c_{13}-\tilde c_{24}\\
e_1=\tilde c_{04}+\tilde c_{23}-\tilde c_{03}-\tilde c_{24}\\
e_2=\tilde c_{04}+\tilde c_{13}-\tilde c_{14}-\tilde c_{03},
\end{gather*}
and it follows that $e_2=e_1-e_0$. 

Letting $\tilde c\in I_3$ be a generator, we see that $\widehat\nu\circ\widehat\sigma(\tilde c)$ and $\mu\circ\partial(\tilde c)$ are both a sum of $12$ terms of the form $\tilde c_{ij}\wedge \tilde c_{kl}$ with $\{i,j\}\neq\{k,l\}$ and $4$ two-torsion terms summing to $\tilde c_{02}\wedge\tilde c_{02}+\tilde c_{13}\wedge\tilde c_{13}$ (the two other terms cancel out). The two-torsion terms thus match up, and one easily checks that the other terms match up as well, proving commutativity of the right square.
\end{proof}

\begin{cor} If $K$ is closed and $\tilde c$ is a lift of an ideal cochain on $K$, $\widehat\sigma(\tilde c)$ is in $\widehat\B_E(F)$.\qed
\end{cor}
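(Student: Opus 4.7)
The plan is to read off the result immediately from the commutative diagram in Lemma~\ref{I3Bhat}. Recall that a lift of an ideal cochain on a closed $3$--cycle $K$ was identified above with the chain $\sum\varepsilon_i\tilde c^i\in I_3$, and that closedness of $K$ was observed to force this chain to be a cycle: $\partial\tilde c=0\in I_2$.

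First I would chase $\tilde c$ through the right-hand square of Lemma~\ref{I3Bhat}. Commutativity gives
\begin{equation*}
\widehat\nu\bigl(\widehat\sigma(\tilde c)\bigr)=\mu\bigl(\partial\tilde c\bigr)=\mu(0)=0\in\wedge^2(E).
\end{equation*}
Thus $\widehat\sigma(\tilde c)$ lies in $\Ker(\widehat\nu)\subset\Z[\widehat F_E]$. Passing to the quotient $\widehat\Pre_E(F)=\Z[\widehat F_E]/\Image(\widehat\rho)$, the class of $\widehat\sigma(\tilde c)$ is killed by the induced map $\widehat\nu\colon\widehat\Pre_E(F)\to\wedge^2(E)$, so by definition
\begin{equation*}
\widehat\sigma(\tilde c)\in\Ker(\widehat\nu)/\Image(\widehat\rho)=\widehat\B_E(F),
\end{equation*}
which is exactly the statement to prove.

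There is really no obstacle here: all the work was done in establishing the chain map $\widehat\sigma\colon I_*\to(\Z[\widehat{\FT}_E]\to\Z[\widehat F_E]\to\wedge^2(E))$ in Lemma~\ref{I3Bhat}. The only subtlety worth mentioning in the write-up is that the symbol $\widehat\sigma(\tilde c)$ is used both for the element of $\Z[\widehat F_E]$ and for its class in $\widehat\Pre_E(F)$; the corollary is the statement about the latter, and the computation above shows it lies in the subgroup $\widehat\B_E(F)$.
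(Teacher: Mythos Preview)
Your argument is correct and matches the paper's own reasoning: the corollary is stated with a \qed and no proof, because it is immediate from Lemma~\ref{I3Bhat} together with the earlier observation that a lift of an ideal cochain on a closed $3$--cycle is a cycle in $I_3$. Your diagram chase through the right-hand square is exactly what the reader is meant to supply.
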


\begin{prop}\label{strongflat}
Let $K$ be a closed $3$--cycle. The set of flattenings coming from a lift of an ideal cochain is a strong flattening of $K$.
\end{prop}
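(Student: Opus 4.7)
The plan is to check directly, for the flattening $(e_i, f_i) = \widehat\sigma(\tilde c|_{\Delta_i})$ of each simplex $\Delta_i$, the two defining identities of a strong flattening. The preliminary observation is that each of the six log-parameters of Figure~\ref{logpar} is a specific $\Z$-linear combination of the four edge labels $\tilde c_{jk}(\Delta_i)$: the formulas defining $\widehat\sigma$ already display this for $e_i$ and $f_i$, and the remaining log-parameters (for the third pair of opposite edges and the negatives corresponding to reversed orientations) are obtained from $e_i$, $f_i$ and $1 \in \Z \subset E$ via the opposite-edge convention of the figure.

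For the edge-sum condition, fix an edge $\mathcal{E}$ of $K$, list the simplices $\Delta_{i_1}, \dots, \Delta_{i_n}$ meeting $\mathcal{E}$, and write the total log-parameter around $\mathcal{E}$ as a signed sum whose $k$th term is a $\Z$-linear combination of the four edge labels of $\Delta_{i_k}$. Because $\tilde c$ is a global cochain, any two simplices glued along a 2-face share labels on that face; reorganizing the sum by the 2-faces of $K$ incident to $\mathcal{E}$, every such 2-face is shared by exactly two of the $\Delta_{i_k}$ inducing opposite orientations on it, so their contributions cancel pairwise and the total vanishes in $E$.

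For the normal-curve condition at a $0$-cell $v$, the link of $v$ in $K$ is a triangulated surface, with one triangle per simplex at $v$, glued along edges corresponding to the 2-faces of $K$ at $v$. A normal curve $\gamma$ crosses a sequence of these gluing edges, and the total log-parameter along $\gamma$ is a sum of per-crossing contributions, each a combination of edge labels of the corresponding simplex. The cochain property of $\tilde c$ ensures that adjacent simplices agree on the shared 2-face, so the contributions at consecutive crossings telescope; for a closed $\gamma$ the total collapses to zero.

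The main obstacle is the bookkeeping of signs together with the explicit dictionary from simplex positions to log-parameter formulas in Figure~\ref{logpar}, which is mechanical but easy to mishandle. I would isolate the verification into a single local identity: for any two simplices of $K$ glued along a common 2-face at $v$, the sum of log-parameter contributions across a normal-curve segment crossing that face is zero. From this local statement, both global conditions --- edge-sum and normal-curve --- follow immediately by additivity across the gluing pattern of $K$, using the sign conventions $\varepsilon_i$ of Definition~\ref{3cycledef}.
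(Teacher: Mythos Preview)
Your approach is essentially the same as the paper's: express each log-parameter as a signed sum of the cochain labels $\tilde c_{jk}$, and observe that for a closed normal curve in the star of a $0$--cell the contributions cancel because adjacent simplices share labels on their common $2$--face. The paper's proof is equally terse (it defers the bookkeeping to Zickert~\cite[Theorem~6.5]{Zickert} and a figure), so your level of detail is comparable; the only caveat is that your final ``local identity'' is phrased imprecisely---a single face-crossing does not contribute zero, rather the exit-contribution from one simplex cancels the entry-contribution to the next, which is the telescoping you already described correctly in the preceding paragraph.
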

\begin{proof}
The proof is identical to the proof of Zickert~\cite[Theorem~6.5]{Zickert}, so we omit some details.
Consider a curve $\alpha$ in the star of a $0$--cell as shown in Figure~\ref{cfigure}. 
When $\alpha$ passes through a simplex, it picks up a log-parameter, which is a signed sum of four terms. The signs are shown in the figure.
If $\alpha$ is a closed curve, it is not difficult to see that all terms must cancel out.
\begin{figure}[ht]
\centering
\includegraphics[width=5.5cm]{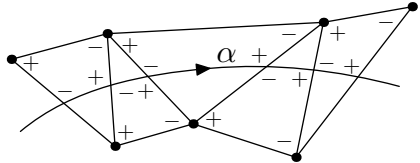}
\caption{A normal curve in the star of a $0$--cell. Each edge and each vertex corresponds to a $1$--cell in $K$.}\label{cfigure}
\end{figure}
\end{proof}

\begin{lemma}\label{sameeltlemma} Let $K$ be a $3$--cycle and let $c$ be an ideal cochain on $K$. Let $e$ be an interior $1$--cell of $K$, and let $\alpha_e\in C^1(K;\Z)$ be the cochain taking $e$ to $1$ and all other $1$--cells to $0$. For every lift $\tilde c$ of $c$, we have
\begin{equation}\label{alphaE}\widehat\sigma(\tilde c+\alpha_e)=\widehat\sigma(\tilde c)\in\widehat\Pre_E(F).\end{equation}
\end{lemma}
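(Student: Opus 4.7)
The plan is to compute the difference $\widehat\sigma(\tilde c+\alpha_e)-\widehat\sigma(\tilde c)$ simplex by simplex using Lemma~\ref{Rformula}, then show that the resulting correction lies in $2\Z\subset E$, so that $\chi$ sends it to zero by Lemma~\ref{kappadef}.

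For each simplex $\Delta_k$ in the star of $e$, let $(i_k,j_k)$ with $i_k<j_k$ be the edge of $\Delta_k$ identified with $e$. The cochain $\alpha_e$ increments the single label $\tilde c^k_{i_kj_k}$ by $1$, and formula~\eqref{cijflat} shows that the flattening $(e_k,f_k)$ is displaced by $(p_k,q_k)=(0,1)$ when $(i_k,j_k)\in\{(0,1),(2,3)\}$, by $(1,0)$ when $(i_k,j_k)\in\{(0,3),(1,2)\}$, and by $(-1,-1)$ when $(i_k,j_k)\in\{(0,2),(1,3)\}$. Applying Lemma~\ref{Rformula}, the change in $\widehat\sigma$ contributed by $\Delta_k$ is $\varepsilon_k\chi(u_k)$ with $u_k=q_ke_k-p_kf_k+p_kq_k$, which evaluates to $e_k$, $-f_k$, or $-e_k+f_k+1$ in the three respective cases. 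Summing over $k$ and using that $\chi$ is a homomorphism,
\[\widehat\sigma(\tilde c+\alpha_e)-\widehat\sigma(\tilde c)=\chi\Bigl(\sum\nolimits_k\varepsilon_k u_k\Bigr).\]

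The key step is to match $u_k$ with a log-parameter plus a parity. Let $\lambda_k$ denote the log-parameter at the edge $(i_k,j_k)$ in the sense of Neumann~\cite[Definition~4.3]{Neumann}, translated via Remark~\ref{ChangeSign}, so that $\lambda_k$ equals $e_k$, $-f_k$, or $f_k-e_k$ in the three respective cases. A direct check then gives $u_k=\lambda_k+\delta_k$, where $\delta_k\in\{0,1\}$ is the combinatorial parity of the edge $(i_k,j_k)$, equal to $1$ precisely for the edges $(0,2)$ and $(1,3)$. By Proposition~\ref{strongflat}, any lift of an ideal cochain yields a strong flattening, so the edge condition holds: $\sum_k\varepsilon_k\lambda_k=0\in E$. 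By Remark~\ref{noparity}, the parity condition is automatic for algebraic (even) flattenings, so $\sum_k\varepsilon_k\delta_k$ is an even integer.

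Combining the two, $\sum_k\varepsilon_k u_k\in 2\Z\subset E$, and Lemma~\ref{kappadef} concludes that $\chi$ sends it to zero. The main obstacle is the identification $u_k=\lambda_k+\delta_k$: it requires choosing the conventions for Neumann's log-parameters and edge parities consistent with the formulas~\eqref{cijflat}, and verifying the match in each of the three edge types. Once that bookkeeping is settled, the statement reduces to the familiar edge and parity conditions, both of which hold automatically in our setting.
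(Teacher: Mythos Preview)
Your argument is correct but takes a genuinely different route from the paper's. The paper observes that the collection of changes $(e_k,f_k)\mapsto(e_k+p_k,f_k+q_k)$ is precisely Neumann's \emph{cycle relation} about the edge $e$ (Neumann~\cite[Section~6]{Neumann}), and then imports Neumann's proof that the cycle relation is a consequence of the lifted five term relation into the algebraic setting. You instead express the difference as $\chi\bigl(\sum_k\varepsilon_k u_k\bigr)$ via Lemma~\ref{Rformula} and argue that the argument of $\chi$ lies in $2\Z$. Your route is more self-contained---it does not require transporting Neumann's cycle-relation argument---while the paper's route is shorter once that black box is granted.

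Two points in your write-up deserve tightening. First, Proposition~\ref{strongflat} is stated only for closed $K$, whereas Lemma~\ref{sameeltlemma} allows general $K$ with $e$ an interior $1$--cell; the cancellation in the proof of Proposition~\ref{strongflat} is local to the star of a $0$--cell and does apply around an interior edge, but you should say so rather than invoke the proposition as stated. Second, your appeal to Remark~\ref{noparity} for the evenness of $\sum_k\varepsilon_k\delta_k$ is not self-evident: that remark concerns Neumann's parity condition on the flattening parameters, which is vacuously satisfied here because all algebraic flattenings are even, whereas your $\delta_k$ is a purely combinatorial edge-type indicator. What you actually need---that around an interior edge the number of incident simplices in which $e$ has type $(0,2)$ or $(1,3)$ is even---follows from orientation compatibility: writing $a_k,b_k$ for the faces of $\Delta_k,\Delta_{k+1}$ glued together, one has $\varepsilon_k\varepsilon_{k+1}=(-1)^{a_k+b_k+1}$, and telescoping around the link of $e$ (using $\{a_k^-,a_k^+\}=\{0,1,2,3\}\setminus\{i_k,j_k\}$) gives $\sum_k\delta_k\equiv 0\pmod 2$. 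This may be exactly what Neumann's Proposition~5.3 establishes, but you should make that identification explicit rather than leave it implicit.
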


\begin{proof}
The map $\widehat\sigma$ associates flattenings to the simplices of $K$, and using \eqref{cijflat} one checks that the flattenings coming from $\tilde c$ and $\tilde c+\alpha$ differ by Neumann's cycle relation~\cite[Section~6]{Neumann} about $e$ (or rather the obvious generalization of this relation to algebraic flattenings). Neumann's proof that the cycle relation is a consequence of the lifted five term relation carries over to the algebraic setup word by word. 
\end{proof}
\begin{cor}\label{sameelement} If $K$ is a closed $3$--cycle, $\widehat\sigma(\tilde c)=\widehat\sigma(\tilde c+\alpha)\in\widehat\B_E(F)$ for any $\alpha\in C^1(K;\Z)$. Hence, an ideal cochain $c$ on $K$ determines an element in $\widehat\sigma(c)\in\widehat\B_E(F)$.\qed
\end{cor}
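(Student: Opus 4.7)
The plan is to deduce Corollary~\ref{sameelement} as a straightforward bookkeeping consequence of Lemma~\ref{sameeltlemma}, together with the observation that since $K$ is closed all $1$--cells are interior, so $C^1(K;\Z)$ is freely generated over $\Z$ by the cochains $\alpha_e$ associated with the $1$--cells $e$ of $K$. Since $\Z\subset E$ lies in the kernel of $\pi\colon E\to F^*$, adding any $\alpha\in C^1(K;\Z)$ to a lift $\tilde c$ of an ideal cochain $c$ produces another lift of the same $c$, so the statement is meaningful.

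First I would record the single-generator, integer-multiple version: for any interior edge $e$ and any $n\in\Z$, the identity
\[
\widehat\sigma(\tilde c+n\alpha_e)=\widehat\sigma(\tilde c)\in\widehat\Pre_E(F)
\]
follows from Lemma~\ref{sameeltlemma} by induction on $|n|$ (using that $\tilde c+(n-1)\alpha_e$ is again a lift of $c$, and that for $n=-1$ the lemma applied to the lift $\tilde c-\alpha_e$ yields $\widehat\sigma(\tilde c-\alpha_e)=\widehat\sigma(\tilde c-\alpha_e+\alpha_e)=\widehat\sigma(\tilde c)$). Writing an arbitrary $\alpha=\sum_e n_e\alpha_e$ (a finite sum, since $K$ has finitely many cells) and iterating one edge at a time then gives $\widehat\sigma(\tilde c+\alpha)=\widehat\sigma(\tilde c)$ in $\widehat\Pre_E(F)$.

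Since $K$ is closed, the corollary just before the statement shows $\widehat\sigma(\tilde c)$ and $\widehat\sigma(\tilde c+\alpha)$ both lie in $\widehat\B_E(F)$, which is a subgroup of $\widehat\Pre_E(F)$ by definition, so the equality automatically holds in $\widehat\B_E(F)$. Finally, for the well-definedness assertion: any two lifts $\tilde c_1,\tilde c_2\in C^1(K;E)$ of an ideal cochain $c\in C^1(K;F^*)$ differ by a cochain with values in $\Ker(\pi)=\Z$, i.e.~by some $\alpha\in C^1(K;\Z)$; the first part then gives $\widehat\sigma(\tilde c_1)=\widehat\sigma(\tilde c_2)$, so $\widehat\sigma(c)\in\widehat\B_E(F)$ is unambiguously defined.

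There is no real obstacle here beyond the reduction to Lemma~\ref{sameeltlemma}; all the genuine content (namely that the flattenings arising from $\tilde c$ and $\tilde c+\alpha_e$ differ by a cycle/five-term relation) is packaged in that lemma via Neumann's cycle relation, and the present corollary is purely a linearity/iteration argument plus the remark that integer perturbations of a lift remain lifts of the same ideal cochain.
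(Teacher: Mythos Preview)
Your argument is correct and matches the paper's approach: the corollary is marked \qed\ in the paper, i.e.\ it is treated as an immediate consequence of Lemma~\ref{sameeltlemma} via exactly the linearity/iteration argument you spell out (closedness forces every $1$--cell to be interior, so one reduces to the single-edge case and iterates). There is nothing to add.
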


\subsection{The action of $Z^1(K;\Z/2\Z)$ on ideal cochains}\label{actionsection}
Let $K$ be a closed $3$--cycle, and suppose that the characteristic of $F$ is not $2$. The group $Z^1(K;\Z/2\Z)$ of cellular $1$--cocycles on $K$ acts on the set of ideal cochains by multiplication. Note that the action does not change the cross-ratios. A cochain $\alpha\in Z^1(K;\Z/2\Z)$ determines a map 
\[B\alpha\colon K\to B(\Z/2\Z)=\R P^{\infty},\]
and we wish to prove that the elements in $\widehat\B_E(F)$ associated to ideal cochains $c$ and $\alpha c$ differ by a two-torsion element which is zero if and only if $B\alpha_*([K])$ is zero in $H_3(\R P^\infty)=\Z/2\Z$.

The homology of a group $G$ is the homology of the complex $B_*(G)$ where $B_n(G)$ is generated by symbols $\langle g_1\vert\dots\vert g_n\rangle$ with $g_i\in G$. Such tuples are in one-one correspondence with $G$--cocycles on $\Delta^n$; a cocycle is uniquely given by its values on the edges between vertices $i$ and $i+1$. Under this correspondence, the boundary maps are induced by the standard ones.
Given a cocycle $\alpha\in Z^1(K;G)$ the restriction of $\alpha$ to $\Delta_i$ determines a tuple $\langle g_1^i\vert g_2^i\vert g_3^i\rangle$ and by Zickert~\cite[Proposition~5.7]{Zickert} we have 
\[B\alpha_*([K])=\sum\varepsilon_i\langle g_1^i\vert g_2^i\vert g_3^i\rangle.\]

Let $\alpha\in Z^1(\Delta^3;\Z/2\Z)$ and let $c\in C^1(\Delta^3;F^*)$ be an ideal cochain. Let $c'=\alpha c$, and pick a lift $\tilde c$ of $c$. Then $\tilde c$ endows $\Delta^3$ with a flattening $\widehat\sigma(\tilde c)$ given by 
\[(e,f)=(\tilde c_{03}+\tilde c_{12}-\tilde c_{02}-\tilde c_{13},\tilde c_{01}+\tilde c_{23}-\tilde c_{02}-\tilde c_{13}).\]
Let $w\in E$ be the sum of the log-parameters at the edges where $\alpha_{ij}=-1$. One easily checks that $w$ is always (uniquely) two-divisible, e.g.~if $\alpha=\langle -1\vert 1\vert -1\rangle$, $w=2e+2(-e+f)=2f$.
Let $\tilde c^\prime$ be the lift of $c^\prime$ defined by
\begin{equation}\label{ctildeprime}
\tilde c^\prime_{ij}=\tilde c_{ij}+
\begin{cases} \frac{1}{2} &\text{ if } \alpha_{ij}=-1\\
 0&\text{ otherwise.}
\end{cases}
\end{equation}

\begin{lemma}\label{wdelta} Let $\delta\in \Z\subset E$ be $1$ if $\alpha=\langle -1\vert -1\vert -1\rangle$ and $0$ otherwise. We have
\begin{equation}
\widehat\sigma(\tilde c^\prime)-\widehat\sigma(\tilde c)=\chi(\frac{1}{2}w)+\chi(\delta)\in\widehat\Pre_E(F).
\end{equation}
\end{lemma}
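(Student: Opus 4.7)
The plan is to express both sides in terms of the cocycle data and then reduce to a short numerical check via Lemma~\ref{Rformula}. Writing $\tilde c'=\tilde c+\eta$, where $\eta\in C^1(\Delta^3;E)$ takes the value $\tfrac{1}{2}$ on edges with $\alpha_{ij}=-1$ and $0$ elsewhere, formula~\eqref{cijflat} immediately gives $\widehat\sigma(\tilde c')=(e+p,f+q)$ with
\[p=\eta_{03}+\eta_{12}-\eta_{02}-\eta_{13},\qquad q=\eta_{01}+\eta_{23}-\eta_{02}-\eta_{13}.\]

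The first step will be to check that $p,q\in\Z$ rather than merely in $\tfrac{1}{2}\Z$; this is the one place where the cocycle hypothesis on $\alpha$ is actually used. The face relations $\eta_{02}\equiv\eta_{01}+\eta_{12}$ and $\eta_{13}\equiv\eta_{01}+\eta_{03}\pmod{\Z}$ coming from the faces $\{0,1,2\}$ and $\{0,1,3\}$ immediately force both combinations into $\Z$. Once this integrality is established, Lemma~\ref{Rformula} applies verbatim and delivers
\[\widehat\sigma(\tilde c')-\widehat\sigma(\tilde c)=\chi(qe-pf+pq)\in\widehat\Pre_E(F).\]

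Next I will rewrite $w$ in the same variables using the log-parameter assignments of Figure~\ref{logpar}: $e$ on the edge pair $[01],[23]$, $f-e$ on $[02],[13]$, and $-f$ on $[03],[12]$. Summing only over edges with $\alpha_{ij}=-1$ telescopes to $w=2qe-2pf$. This simultaneously confirms the two-divisibility of $w$ mentioned just before the lemma (uniqueness of $\tfrac{1}{2}w$ coming from $2$-primitivity of $E$) and gives $\tfrac{1}{2}w=qe-pf$.

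The identity then reduces to the congruence $pq\equiv\delta\pmod{2}$, since $\chi$ is additive (Lemma~\ref{chihomo}) and kills $2\Z$ (Lemma~\ref{kappadef}), so this congruence alone yields $\chi(qe-pf+pq)=\chi(\tfrac{1}{2}w)+\chi(\delta)$. I will verify it by a direct case check over the eight cocycles $\alpha\in Z^1(\Delta^3;\Z/2\Z)$, parametrized by $(g_1,g_2,g_3)\in\{\pm 1\}^3$: only $\alpha=\langle -1|-1|-1\rangle$ has $\delta=1$, and for it $(p,q)=(1,1)$ so $pq=1$; each of the other seven cases yields $p=q=0$ or $pq=0$ together with $\delta=0$. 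The principal obstacle is not conceptual but notational, namely pinning down the sign conventions of Figure~\ref{logpar} compatibly with~\eqref{cijflat} so that the clean formula $w=2qe-2pf$ comes out on the nose.
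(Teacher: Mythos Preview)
Your proof is correct and follows essentially the same approach as the paper: both arguments reduce to Lemma~\ref{Rformula} and a case check over the eight cocycles on $\Delta^3$. You have organized the bookkeeping more cleanly by first deriving the uniform formulas $(e+p,f+q)$ and $w=2qe-2pf$, thereby reducing the case check to the single parity condition $pq\equiv\delta\pmod 2$, whereas the paper simply verifies each case directly.

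One small correction: the two face relations you cite, from $\{0,1,2\}$ and $\{0,1,3\}$, do force $p\in\Z$ (substituting gives $p\equiv -2\eta_{01}$), but they do \emph{not} by themselves force $q\in\Z$; substituting leaves $q\equiv \eta_{23}-\eta_{12}-\eta_{01}-\eta_{03}$, which is a priori only in $\tfrac{1}{2}\Z$. Using instead the face $\{1,2,3\}$ (so $\eta_{13}\equiv\eta_{12}+\eta_{23}$) together with $\{0,1,2\}$ gives $q\equiv -2\eta_{12}\in\Z$ immediately. This slip is inconsequential, since your explicit eight-case check confirms $p,q\in\Z$ anyway.
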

\begin{proof}
This is done case by case using Lemma~\ref{Rformula}. If e.g.~$\alpha=
\langle 1\vert -1\vert 1\rangle$, $w=-2e$ and we have
\[\widehat\sigma(\tilde c^\prime)-\widehat\sigma(\tilde c)=(e,f-1)-(e,f)=\chi(-e)=\chi(\frac{1}{2}w)+\chi(\delta).\]
The other seven cases are similar and left to the reader.
\end{proof}

\begin{thm}\label{Z2action} Let $\alpha\in Z^1(K;\Z/2\Z)$. For any ideal cochain $c$, $\widehat\sigma(\alpha c)-\widehat\sigma(c)$ is two-torsion in $\widehat\B_E(F)$, which is trivial if and only if $B\alpha_*([K])\in H_3(\R P^\infty)=\Z/2\Z$ is trivial.
\end{thm}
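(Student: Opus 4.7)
The plan is to apply Lemma~\ref{wdelta} simplex-by-simplex and identify the leftover scalar in $E$ with the class $B\alpha_*([K])$, so that the difference in $\widehat\B_E(F)$ is precisely $\chi$ of this class.

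First, I would fix a lift $\tilde c\in C^1(K;E)$ of $c$ and let $\tilde c^\prime$ be the lift of $\alpha c$ built from $\tilde c$ as in~\eqref{ctildeprime}. Summing Lemma~\ref{wdelta} over all simplices $\Delta_i$ with signs $\varepsilon_i$, and invoking linearity of $\chi$ (Corollary~\ref{extofchi}), yields
\[
\widehat\sigma(\alpha c)-\widehat\sigma(c)=\chi\Big(\tfrac{1}{2}\sum\nolimits_i\varepsilon_i w_i+\sum\nolimits_i\varepsilon_i\delta_i\Big)\in\widehat\B_E(F).
\]

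Next I would argue that $\sum_i\varepsilon_i w_i=0$ in $E$. Regrouping the double sum over (simplex, edge)-pairs by the edge classes of $K$, each edge class $e$ with $\alpha(e)=-1$ contributes the total signed log-parameter about $e$, which vanishes by the flattening condition on $K$; edges with $\alpha(e)=+1$ contribute nothing by the definition of $w_i$. Thus the difference reduces to $\chi\bigl(\sum_i\varepsilon_i\delta_i\bigr)$. To identify this parity with $B\alpha_*([K])$, recall from the preamble that $B\alpha_*([K])=\sum_i\varepsilon_i\langle g_1^i\vert g_2^i\vert g_3^i\rangle$ in $H_3(\Z/2\Z;\Z)=\Z/2\Z$. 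Passing to the normalized bar resolution, any tuple containing an entry equal to $1$ is degenerate, so only the nondegenerate symbol $\langle -1\vert -1\vert -1\rangle$ contributes, which happens precisely when $\delta_i=1$. Hence $B\alpha_*([K])\equiv\sum_i\varepsilon_i\delta_i\pmod{2}$.

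Finally, by Lemma~\ref{kappadef}, $\chi$ annihilates $2\Z\subset E$, so $\chi\bigl(\sum_i\varepsilon_i\delta_i\bigr)$ is always two-torsion and depends only on the parity of $\sum_i\varepsilon_i\delta_i$. The forward direction of the iff is immediate. The reverse direction, which I expect to be the main technical obstacle, amounts to showing $\chi(1)\neq 0$ in $\widehat\B_E(F)$; this follows from the injectivity of $\chi\colon E/2\Z\to\widehat\Pre_E(F)$ established in Corollary~\ref{primextinjcor}, valid whenever $F$ admits an embedding in $\C$ (and should be imposed or verified in whatever setting the theorem is ultimately applied).
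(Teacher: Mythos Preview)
Your argument is correct and matches the paper's proof essentially step for step: apply Lemma~\ref{wdelta} simplexwise, use the flattening condition from Proposition~\ref{strongflat} to kill $\sum_i\varepsilon_i w_i$ (the paper notes explicitly that $E$ is primitive, hence torsion-free, so halving is legitimate), and identify $\sum_i\varepsilon_i\delta_i\pmod 2$ with $B\alpha_*([K])$ via the bar description of $H_3(\Z/2\Z)$. Your remark that the reverse implication hinges on $\chi(1)\neq 0$, supplied by Corollary~\ref{primextinjcor}, is actually more careful than the paper, which leaves this point implicit.
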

\begin{proof} 
Let $\tilde c$ be a lift of $c$ and let $\tilde c^\prime$ be the lift of $\alpha c$ defined by \eqref{ctildeprime}. For each simplex $\Delta_i$ of $K$, we have elements $w_i$ and $\delta_i$ as above. Since the flattening of $K$
defined by $\tilde c$ is strong, $\sum \varepsilon_i w_i=0\in E$, and since $E$ has no two-torsion, Lemma~\ref{wdelta} implies that 
\[\widehat\sigma(\tilde c^\prime)-\widehat\sigma(\tilde c)=\sum \varepsilon_i\chi(\delta_i).\]
One easily checks that the isomorphism $H_3(B\Z/2\Z)\cong\Z/2\Z$ is induced by the map $B_3(\Z/2\Z)\to\Z/2\Z$ taking $\langle -1\vert -1\vert -1\rangle$ to $1$ and all other generators to $0$. This proves the result.
\end{proof}

\subsection{Ideal cochains and homology of linear groups}\label{idealcochainSection}

In this section we define a canonical map $\widehat\lambda\colon H_3(\SL(2,F))\to \widehat \B_E(F)$. This is purely algebraic and follows Dupont--Zickert~\cite{DupontZickert}. 
We assume that $F$ is infinite.
 
Let $C_*(F^2)$ be the chain complex generated in dimension $n$ by $(n+1)$--tuples of vectors in $F^2\setminus\{0\}$ in general position, together with the usual boundary map. 
Letting $p\colon F^2\setminus\{0\}\to P^1(F)$ be the canonical projection, a simple computation (as in Dupont--Zickert~\cite[Section~3.1]{DupontZickert}) shows that 
\begin{equation}\label{detformula}
z=\frac{\det(v_0,v_3)\det(v_1,v_2)}{\det(v_0,v_2)\det(v_1,v_3)},\quad 1-z=\frac{\det(v_0,v_1)\det(v_2,v_3)}{\det(v_0,v_2)\det(v_1,v_3)},\end{equation}
where $z$ is the cross-ratio of the tuple $(p(v_0),p(v_1),p(v_2),p(v_3))$. It follows that there is a chain map
\begin{equation}\label{defofGamma}
\Gamma\colon C_*(F^2)\to I_*,\qquad \Gamma(v_0,\dots,v_n)_{ij}=\log\det(v_i,v_j).
\end{equation}
Here $\log$ denotes a fixed section of $\pi\colon E\to F^*$. We refer to it as a \emph{logarithm}.
Let $\widehat\lambda=\widehat\sigma\circ\Gamma$. Then $\widehat\lambda$ is $\SL(2,F)$--invariant, and by Lemma~\ref{I3Bhat} it induces a map $H_3(C_*(F^2)_{\SL(2,F)})\to\widehat\B_E(F)$. 

Recall that the homology of a group $G$ is the homology of $(F_*)_G=F_*\otimes_{\Z[G]}\Z$, where $F_*$ is any free resolution of $\Z$ by $G$-modules. One such resolution is the complex $C_*(G)$ of tuples in $G$. Note that $C_*(G)_G$ equals the complex $B_*(G)$ considered in Section~\ref{actionsection}. If $G=\SL(2,F)$ we may assume (see e.g.~Dupont--Zickert~\cite[Section~3.2]{DupontZickert}) that all tuples $(g_0,\dots g_n)$ are in general position in the sense that $(g_0v,\dots,g_nv)\in C_n(F^2)$ for some fixed $v\neq 0\in F^2$ (the particular choice is inessential). It follows that $\lambda$ canonically extends to a map
\begin{equation}\label{giflat}
\widehat\lambda\colon H_3(\SL(2,F))\to\widehat\B_E(F), \quad (g_0,g_1,g_2,g_3)\mapsto\widehat\sigma\circ\Gamma(g_0v,g_1v,g_2v,g_3v).
\end{equation}

\begin{remark}\label{idealcochainsexist}
Note that for each $\alpha\in H_3(\SL(2,F))$, $\widehat\lambda(\alpha)$ is induced by an ideal cochain on a $3$--cycle.
The fact that $\widehat\lambda$ is independent of the choice of logarithm follows from Corollary~\ref{sameelement}.
\end{remark}

\begin{remark}\label{GL2}
The map $\pi\circ\widehat\lambda\colon C_3(F^2)\to\Pre(F)$ is $\GL(2,F)$--invariant, and it follows that there is an induced map $H_3(\GL(2,F))\to\B(F)$. This map factors through $H_3(C_*(P^1(F))_{\GL(2,F)})$, and thus agrees with that of Suslin~\cite{Suslin}.
\end{remark}

\section{The extended Bloch group and algebraic $K$--theory}\label{AlgKeqBhat}
In this section, we prove our main result Theorem~\ref{mainthm}. We do this in three steps.
The first and most difficult step is to extend the map $\widehat\lambda$ from Section~\ref{idealcochainSection} to a map
$\widehat\lambda\colon H_3(\GL(3,F))\to \widehat\B_E(F).$
Once this has been done, we obtain a map
\begin{equation}\label{MaponK}\xymatrix{K_3(F)\ar[r]^-H& {H_3(\GL(F))}\ar[r]^-\cong &{H_3(\GL(3,F))}\ar[r]^-{\widehat\lambda}&{\widehat\B_E(F)}},\end{equation}
where $H$ is the Hurewicz map and the middle isomorphism is Suslin's stability result Theorem~\ref{Suslinstability}. In the first step, we only require that $F$ be infinite and that $E$ be primitive.
The second step is to prove that this map takes the image of $K_3^M(F)$ to $0$. To do this, we need to assume that $F$ is a number field, or more generally, a free field. The third and final step is to show that $\widehat\lambda$ induces a map between the diagrams \eqref{Suslinexact} and \eqref{exactforfreefields}. The result then follows from the five-lemma. 

\subsection{Step one: Extension of $\widehat \lambda$ to $H_3(\GL(3,F))$}\label{H3SL3} 
We start by constructing $\widehat\lambda$ on $H_3(\SL(2,F))$.
We assume that $F$ is infinite and that $E$ is primitive.
Much of the construction draws inspiration from Igusa~\cite{Igusa}, Fock--Goncharov~\cite{FockGoncharov}, and private discussions with Dylan Thurston and Stavros Garoufalidis. In Garoufalidis--Thurston--Zickert~\cite{GaroufalidisThurstonZickert} we generalize to $\SL(n,F)$ and discuss some of the underlying geometric ideas motivating the construction.


In Section~\ref{idealcochainSection} we associated an ideal cochain to a quadruple of vectors in $F^2$. We now generalize this to tuples of vectors in $F^3$. 
Let $\mathfrak v=(v_0,\dots,v_n)$ be a tuple of vectors in $F^3$ in general position, and let $w\in F^3$ be in general position with respect to the $v_i$'s. For each $i\in\{0,\dots,n+1\}$ we have a cochain $\tilde c^i_w\in C^1(\Delta^n;E)$ given by \begin{equation}\label{alphadets}
\tilde c_w^i(\mathfrak v)_{jk}=\begin{cases}\log\det (w,v_j,v_k)\text{ if }i\leq j<k\\\log\det(v_j,w,v_k)\text{ if }j<i\leq k\\\log\det(v_j,v_k,w)\text{ if }j<k<i,\end{cases}
\end{equation}
where, as in Section~\ref{idealcochainSection}, $\log$ is a fixed section of $\pi\colon E\to F^*$. 

\begin{lemma} Each $\tilde c^i_w(\mathfrak v)$ is in $I_n$, and for each restriction to a $3$--dimensional face of $\Delta^n$, the cross-ratio is independent of $i$.
\end{lemma}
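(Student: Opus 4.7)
The plan is to check the ideal-cochain condition \eqref{cij} directly on each $3$-dimensional face and simultaneously read off the cross-ratio, which will turn out to depend only on $\mathfrak v$ and $w$. Note first that the lifts to $E$ play no role in the argument: condition \eqref{cij} only constrains the $F^*$-values $\pi\bigl(\tilde c^i_w(\mathfrak v)_{jk}\bigr)$.

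Fix a $3$-face with vertices $a_0 < a_1 < a_2 < a_3$ of $\Delta^n$ and set $u_l = v_{a_l}$. The alternating trilinear form $\det(w,-,-)$ descends to a nondegenerate alternating $2$-form on the quotient $V := F^3/\langle w\rangle$, which is a $2$-dimensional $F$-space. By general position of $\mathfrak v$ and $w$, the projective classes of the images $\bar u_l$ are four distinct points of $P^1(V)$, so formula \eqref{detformula} applied to these points (in any basis of $V$, the basis-dependent scalar cancelling in every ratio) produces
\[ z_w = \frac{\det(w,u_0,u_3)\det(w,u_1,u_2)}{\det(w,u_0,u_2)\det(w,u_1,u_3)} \in F\setminus\{0,1\}, \]
together with the companion expression $1 - z_w = \det(w,u_0,u_1)\det(w,u_2,u_3)/\det(w,u_0,u_2)\det(w,u_1,u_3)$, obtained from the Pl\"ucker identity
\[\det(w,u_0,u_1)\det(w,u_2,u_3) - \det(w,u_0,u_2)\det(w,u_1,u_3) + \det(w,u_0,u_3)\det(w,u_1,u_2) = 0.\]
This $z_w$ is the prospective cross-ratio of the face and manifestly depends only on $(\mathfrak v, w)$.

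By the definition of $\tilde c^i_w$, the value $\pi\bigl(\tilde c^i_w(\mathfrak v)_{a_j a_k}\bigr)$ equals $\det(w,u_j,u_k)$ up to a sign $\varepsilon_{jk} \in \{\pm 1\}$ determined by the position of $w$ in the $3\times 3$ determinant: cyclic permutations preserve sign while a single transposition flips it, so $\varepsilon_{jk} = -1$ precisely when $a_j < i \le a_k$. To establish \eqref{cij} with $z = z_w$ it thus suffices to verify that both sign products
\[ \varepsilon_{03}\varepsilon_{12}\varepsilon_{02}\varepsilon_{13} \quad \text{and} \quad \varepsilon_{01}\varepsilon_{23}\varepsilon_{02}\varepsilon_{13} \]
equal $+1$. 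Letting $i'$ denote the number of $a_l$ strictly less than $i$, the edges $(a_j,a_k)$ with $\varepsilon_{jk}=-1$ are exactly those crossing the consecutive partition $\{0,\dots,i'-1\} \mid \{i',\dots,3\}$ of the face, and a direct inspection of $i'\in\{0,1,2,3,4\}$ shows that each of the two quadruples $\{(0,3),(1,2),(0,2),(1,3)\}$ and $\{(0,1),(2,3),(0,2),(1,3)\}$ meets every such partition in an even number of edges. Both sign products are therefore $+1$, so \eqref{cij} holds with $z = z_w$. This proves both that $\tilde c^i_w(\mathfrak v) \in I_n$ and that the cross-ratio is independent of $i$.

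The only real work is the sign bookkeeping in the last step; once phrased as the combinatorial statement above it reduces to a short finite check, and I foresee no deeper obstacle.
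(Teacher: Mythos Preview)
Your proof is correct and follows essentially the same route as the paper's: reduce to a $3$-face, project to $F^3/\langle w\rangle\cong F^2$ to identify the cross-ratio $z_w$ via \eqref{detformula}, and then observe that the position of $w$ in each determinant $\det(\cdot,\cdot,\cdot)$ only contributes signs that cancel in the ratios \eqref{cij}. The paper normalises $w=(1,0,0)$ and simply asserts the $i$-independence of the two ratios, whereas you keep $w$ general and carry out the sign bookkeeping explicitly via the partition indexed by $i'$; this is a presentational difference, not a substantive one.
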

\begin{proof}
We may assume that $\mathfrak v=(v_0,v_1,v_2,v_3)$. Let $c^i_w(\mathfrak v)$ be the projection of $\tilde c^i_w(\mathfrak v)$ to $C^1(\Delta^3;F^*)$, and let $p\colon F^3\setminus\{w\}\to P(F^3/\langle w\rangle)$ denote the map induced by projection. By applying a linear transformation if necessary, we may assume that $w=(1,0,0)$, and identify $F^3/\langle w\rangle$ with $F^2$. It now follows from \eqref{detformula} that the cross-ratio $z$ of the tuple $(p(v_0),p(v_1),p(v_2),p(v_3))$ of elements in $P(F^3/\langle w\rangle)\approx P^1(F)$ satisfies
\[z=\frac{\det(w,v_0,v_3)\det(w,v_1,v_2)}{\det(w,v_0,v_2)\det(w,v_1,v_3)},\quad 1-z=\frac{\det(w,v_0,v_1)\det(w,v_2,v_3)}{\det(w,v_0,v_2)\det(w,v_1,v_3)}.\]
It follows that $c^0_w(\mathfrak v)$ is an ideal cochain with cross-ratio $z$. 
Since the expressions
\[\frac{c^i_w(\mathfrak v)_{03}c^i_w(\mathfrak v)_{12}}{c^i_w(\mathfrak v)_{02}c^i_w(\mathfrak v)_{13}}, 
\qquad \frac{c^i_w(\mathfrak v)_{01}c^i_w(\mathfrak v)_{23}}{c^i_w(\mathfrak v)_{02}c^i_w(\mathfrak v)_{13}}\]
are independent of $i$, it follows that the same is true for all the $c^i_w(\mathfrak v)$'s. 
\end{proof}

If $\mathfrak v=(v_0,v_1,v_2,v_3)$, we let $(v_0,v_1,v_2,v_3)^i_w$ denote the flattening $\widehat\sigma(\tilde c^i_w(\mathfrak v))$. The log-parameters are given by 
\begin{equation}\label{alphaef}
\begin{gathered}
e=\tilde c^i_w(\mathfrak v)_{03}+\tilde c^i_w(\mathfrak v)_{12}-\tilde c^i_w(\mathfrak v)_{02}-\tilde c^i_w(\mathfrak v)_{13}\\
f=\tilde c^i_w(\mathfrak v)_{01}+\tilde c^i_w(\mathfrak v)_{23}-\tilde c^i_w(\mathfrak v)_{02}-\tilde c^i_w(\mathfrak v)_{13}.
\end{gathered}
\end{equation}

\begin{lemma}\label{boundaryi}
The following formulas hold in $\widehat \Pre_E(F)$ (note the superscripts). 
\small{
\begin{gather*}
\begin{aligned}
(v_1,v_2,v_3,v_4)_w^0\!-\!(v_0,v_2,v_3,v_4)_w^0\!+\!(v_0,v_1,v_3,v_4)_w^0\!-\!(v_0,v_1,v_2,v_4)_w^0\!+\!(v_0,v_1,v_2,v_3)_w^0&\!=\!0\\
(v_1,v_2,v_3,v_4)_w^0\!-\!(v_0,v_2,v_3,v_4)_w^1\!+\!(v_0,v_1,v_3,v_4)_w^1\!-\!(v_0,v_1,v_2,v_4)_w^1\!+\!(v_0,v_1,v_2,v_3)_w^1&\!=\!0\\
(v_1,v_2,v_3,v_4)_w^1\!-\!(v_0,v_2,v_3,v_4)_w^1\!+\!(v_0,v_1,v_3,v_4)_w^2\!-\!(v_0,v_1,v_2,v_4)_w^2\!+\!(v_0,v_1,v_2,v_3)_w^2&\!=\!0\\
(v_1,v_2,v_3,v_4)_w^2\!-\!(v_0,v_2,v_3,v_4)_w^2\!+\!(v_0,v_1,v_3,v_4)_w^2\!-\!(v_0,v_1,v_2,v_4)_w^3\!+\!(v_0,v_1,v_2,v_3)_w^3&\!=\!0\\
(v_1,v_2,v_3,v_4)_w^3\!-\!(v_0,v_2,v_3,v_4)_w^3\!+\!(v_0,v_1,v_3,v_4)_w^3\!-\!(v_0,v_1,v_2,v_4)_w^3\!+\!(v_0,v_1,v_2,v_3)_w^4&\!=\!0.
\end{aligned}
\end{gather*}
}

\noindent\normalsize{We will refer to the left hand sides of the five equations above as \emph{boundaries}, and denote them by $\partial^i(v_0,v_1,v_2,v_3,v_4)_w$, $i\in\{0,\dots,4\}$.}

\end{lemma}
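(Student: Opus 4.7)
The plan is to deduce all five identities simultaneously as instances of the commutative diagram in Lemma~\ref{I3Bhat}, applied to the five cochains $\tilde c^0_w(\mathfrak v),\ldots,\tilde c^4_w(\mathfrak v)\in C^1(\Delta^4;E)$, where $\mathfrak v=(v_0,\ldots,v_4)$. By the lemma preceding the statement, each $\tilde c^j_w(\mathfrak v)$ lies in $I_4$. Hence $\widehat\rho\bigl(\widehat\sigma(\tilde c^j_w(\mathfrak v))\bigr)=\widehat\sigma\bigl(\partial\tilde c^j_w(\mathfrak v)\bigr)$, and this element is zero in $\widehat\Pre_E(F)$ because $\Image(\widehat\rho)$ is divided out. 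So I just need to check that the expansion of $\widehat\sigma(\partial\tilde c^j_w(\mathfrak v))$ is precisely the left-hand side of the $(j{+}1)$st equation.

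The combinatorial key is to track where $w$ ends up after removing a vertex. By the definition \eqref{alphadets}, $\tilde c^j_w$ places $w$ ``at position~$j$'' among the $v_i$'s. When the $k$th vertex is deleted, the resulting cochain on the 3-face $\partial_k\Delta^4$ is exactly $\tilde c^{j'}_w$ of the subtuple, with
\[ j'=\begin{cases} j-1 &\text{if } k<j,\\ j &\text{if } k\ge j.\end{cases}\]
This is immediate from \eqref{alphadets}: dropping a $v_k$ that comes before $w$ shifts $w$'s positional index down by one, while dropping one that comes after leaves it unchanged. Feeding this into the standard simplicial boundary $\partial=\sum_k(-1)^k\partial_k$ and applying $\widehat\sigma$ termwise yields, for $j=0,1,2,3,4$, the five signed sums displayed in the statement; e.g. for $j=1$ the face $\partial_0$ yields superscript $0$ while $\partial_1,\partial_2,\partial_3,\partial_4$ all yield superscript $1$, matching the second equation, and analogously for the others.

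So the argument reduces to a bookkeeping check and an appeal to Lemma~\ref{I3Bhat}. The only substantive step is really the auxiliary lemma asserting $\tilde c^j_w(\mathfrak v)\in I_4$, which was proved just above the statement. The ``main obstacle,'' if any, is simply to make sure the superscript-shift rule above is applied consistently, but once that is laid down the five equations are identical in character and all reduce to the fact that, in $\widehat\Pre_E(F)$, any element in the image of $\widehat\rho$ vanishes.
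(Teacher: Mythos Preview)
Your argument is correct, and it is a cleaner packaging than the paper's. The paper proves the lemma by directly verifying, for each of the five displayed sums, that the five flattenings satisfy the equations of Definition~\ref{fiveeqdef} (it works out one instance, $f_3=f_2-f_1$ for $\partial^2$, and leaves the rest). You instead observe that each $\tilde c^j_w(\mathfrak v)$ lies in $I_4$ (by the lemma immediately before), so Lemma~\ref{I3Bhat} already guarantees that $\widehat\sigma\bigl(\partial\tilde c^j_w(\mathfrak v)\bigr)$ is in $\Image(\widehat\rho)$ and hence vanishes in $\widehat\Pre_E(F)$; the only remaining work is the superscript-shift rule identifying the simplicial faces of $\tilde c^j_w$ with the terms in the $j$th displayed equation, which you state correctly and which is straightforward from \eqref{alphadets}. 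In effect you factor the paper's hand computation through Lemma~\ref{I3Bhat}, trading five parallel verifications for one bookkeeping observation. The paper's direct approach has the minor advantage of being self-contained at this point, while yours makes the structural reason for the identities transparent and avoids repeating work already done in the proof of Lemma~\ref{I3Bhat}.
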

\begin{proof}
We will show that each of the boundaries corresponds to a lifted five term relation. To do this we must prove that the flattenings satisfy the five equations of Definition~\ref{fiveeqdef}. Suppose we wish to verify that $f_3=f_2-f_1$ for the boundary $\partial^2$. The relevant terms involved in this are $(v_0,v_1,v_2,v_4)_w^2$, $(v_0,v_1,v_3,v_4)_w^2$ and ($v_0,v_2,v_3,v_4)_w^1$. If we denote their flattenings by $(e_3,f_3)$, $(e_2,f_2)$ and $(e_1,f_1)$, it follows from \eqref{alphaef} and \eqref{alphadets} that
\begin{equation*}
\begin{gathered}
\begin{aligned}
f_3&=\log(v_0,v_1,w)+\log(w,v_2,v_4)-\log(v_0,w,v_2)-\log(v_1,w,v_4)\\
f_2&=\log(v_0,v_1,w)+\log(w,v_3,v_4)-\log(v_0,w,v_3)-\log(v_1,w,v_4)\\
f_1&=\log(v_0,w,v_2)+\log(w,v_3,v_4)-\log(v_0,w,v_3)-\log(w,v_2,v_4),
\end{aligned}
\end{gathered}
\end{equation*}
where $\log(u,v,w)$ denotes $\log(\det(u,v,w))$. Hence, $f_3=f_2-f_1$ as desired. The verification of the other formulas are similar and are thus left to the reader.
\end{proof}

\begin{lemma}\label{boundary} We have
\begin{multline}(v_1,v_2,v_3,v_4)_{v_0}^0-(v_0,v_2,v_3,v_4)_{v_1}^1+(v_0,v_1,v_3,v_4)_{v_2}^2\\-(v_0,v_1,v_2,v_4)_{v_3}^3+(v_0,v_1,v_2,v_3)_{v_4}^4=0\in\widehat\Pre_E(F).
\end{multline}
We will denote the left hand side by $\partial(v_0,v_1,v_2,v_3,v_4)$.
\end{lemma}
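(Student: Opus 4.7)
My plan is to show that the five summands of $\partial(v_0,\ldots,v_4)$, viewed as flattenings in $\widehat{F}_E$, form a lifted five term relation in the sense of Definition~\ref{fiveeqdef}. Once that is established, the identity $\partial(v_0,\ldots,v_4)=0\in\widehat\Pre_E(F)$ follows immediately from the defining relations of $\widehat\Pre_E(F)$, namely that $\widehat\rho$ sends any element of $\widehat{\FT}_E$ to zero.

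The first step is to write each summand in closed form. Setting $D_{ijm}:=\log\det(v_i,v_j,v_m)$ for $i<j<m$, I would unwind \eqref{alphadets} for the cochain $\tilde c_{v_k}^{\,k}$ on the 4-tuple $(v_0,\dots,\hat v_k,\dots,v_4)$: its six edge labels are chosen from the $D_{ijm}$'s, with $v_k$ playing the role of $w$. Substituting into \eqref{alphaef} yields an explicit pair $(e^{(k)},f^{(k)})\in E\times E$ for each $k$. For example, this gives
\begin{gather*}
(e^{(0)},f^{(0)})=(D_{014}+D_{023}-D_{013}-D_{024},\; D_{012}+D_{034}-D_{013}-D_{024}),\\
(e^{(1)},f^{(1)})=(D_{014}+D_{123}-D_{013}-D_{124},\; D_{012}+D_{134}-D_{013}-D_{124}),
\end{gather*}
and the three remaining pairs are computed the same way, each using only determinants built from three-element subsets of $\{0,1,2,3,4\}$.

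The second step, which is the main body of the proof, is to verify the five equations of Definition~\ref{fiveeqdef} for the tuple $\bigl((e^{(0)},f^{(0)}),\dots,(e^{(4)},f^{(4)})\bigr)$, exactly in the spirit of the proof of Lemma~\ref{boundaryi}. Each of the identities $e^{(2)}=e^{(1)}-e^{(0)}$, $e^{(3)}=e^{(1)}-e^{(0)}-f^{(1)}+f^{(0)}$, $f^{(3)}=f^{(2)}-f^{(1)}$, $e^{(4)}=f^{(0)}-f^{(1)}$, and $f^{(4)}=f^{(2)}-f^{(1)}+e^{(0)}$ collapses by direct cancellation among the symbols $D_{ijm}$. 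Once these five relations hold, the tuple lies in $\widehat{\FT}_E$, and hence
$$\partial(v_0,\ldots,v_4)\;=\;\sum_{k=0}^4(-1)^k(e^{(k)},f^{(k)})\;=\;\widehat\rho\bigl((e^{(0)},f^{(0)}),\dots,(e^{(4)},f^{(4)})\bigr)\;=\;0\in\widehat\Pre_E(F).$$

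The main obstacle will be pure bookkeeping: one has to correctly sort which of the three cases of \eqref{alphadets} applies for every edge $jm$ and every choice of external vector $v_k$ with matching superscript $k$, and then identify the six edge labels of each summand among the $\binom{5}{3}=10$ symbols $D_{ijm}$. What is new compared to Lemma~\ref{boundaryi} is simply that the external vector now varies among $v_0,\dots,v_4$ rather than being a fixed generic $w$, but the interaction between this variation and the simultaneous shift in the superscripts is precisely what is encoded in Definition~\ref{fiveeqdef}, so no further conceptual input is required.
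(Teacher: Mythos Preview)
Your proposal is correct and follows exactly the same approach as the paper: the paper's proof simply says ``As in the proof of Lemma~\ref{boundaryi}, we can verify that the flattenings satisfy the five equations in Definition~\ref{fiveeqdef}. We leave this to the reader.'' Your explicit computation of $(e^{(0)},f^{(0)})$ and $(e^{(1)},f^{(1)})$ is correct, and the remaining verifications are indeed routine bookkeeping in the symbols $D_{ijm}$ as you describe.
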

\begin{proof} As in the proof of Lemma~\ref{boundaryi}, we can verify that the flattenings satisfy the five equations in Definition~\ref{fiveeqdef}. We leave this to the reader.
\end{proof}

If $\F$ is an ordered basis of $F^k$, we let $\F_i$ denote the $i$th basis vector. A set $S$ of ordered bases is in \emph{general position} if any set of $k$ basis vectors from $S$ is linearly independent.
Let $CF_*$ be the chain complex generated in dimension $n$ by tuples $(\F 0,\dots,\F n)$ of ordered bases of $F^3$ in general position, together with the usual boundary map. Left multiplication makes $CF_*$ into a chain complex of free $\GL(3,F)$--modules. Since $F$ is assumed to be infinite, it is easy to see that $CF_*$ is acyclic. Hence, the complexes $(CF_*)_{\SL(3,F)}$ and $(CF_*)_{\GL(3,F)}$ compute the homology groups $H_*(\SL(3,F))$ and $H_*(\GL(3,F))$, respectively.

Consider the $\SL(3,F)$--invariant map $\widehat \lambda\colon CF_3\to \widehat \Pre_E(F)$ given by sending a generator $(\F0,\F1,\F2,\F3)$ to
\begin{multline}\label{flagtoBhat}(\F0_2,\F1_1,\F2_1,\F3_1)_{\F 0_1}^0+(\F0_1,\F1_2,\F2_1,\F3_1)_{\F 1_1}^1\\+(\F0_1,\F1_1,\F2_2,\F3_1)_{\F 2_1}^2+(\F0_1,\F1_1,\F2_1,\F3_2)_{\F 3_1}^3.
\end{multline}

We will often abbreviate the notation by omitting the $\F$, and writing a subscript $\F i_1$ as $i$, e.g.~we abbreviate $(\F0_2,\F1_1,\F2_1,\F3_1)_{\F0_1}^0$ to 
$(0_2,1_1,2_1,3_1)_0^0$.

To help the reader visualize the arguments that follow, we give a geometric way of viewing the map $\widehat \lambda$:  
A generator of $CF_3$ can be thought of as a simplex $\Delta$ together with an association of an ordered basis $\F i$ to each vertex.
We may think of each of the four terms in \eqref{flagtoBhat} as a standard simplex endowed with an ideal cochain.
We mark $\Delta$ with two points on each edge and a point on each face as shown in Figure~\ref{flagfigure}. We will refer to these points as \emph{edge points} and \emph{face points} respectively. 
Each point is given uniquely by a tuple $\beta=(x_0,x_1,x_2,x_3)$ with $x_0+x_1+x_2+x_3=3$, where the coordinate $x_i$ measures the ``distance'' to the face opposite vertex $i$. For each such $\beta$, let $\beta_i$ be the ordered set $\{\F i_1,\dots,\F i_{x_i}\}$ and let $S_\beta=\beta_0\cup\beta_1\cup\beta_2\cup\beta_3$. Note that $S_\beta$ always has exactly $3$ elements. Hence, $\det(S_\beta)$ is well defined and gives a labeling of each marked point of $\Delta$. 
As an example, the edge point, closest to vertex $1$, between vertices $1$ and $2$ is labeled by $\det(\F1_1,\F1_2,\F2_1)$.

We can think of $\Delta$ as a union of four simplices $\Delta_i$, where $\Delta_i$ is the simplex spanned by the $i$th vertex of $\Delta$ and the marked points with $x_i\neq 0$. We think of the $\Delta_i$'s as being disjoint. The labelings of the marked points in $\Delta$ gives rise to cochains on $\Delta_i$, and using~\eqref{alphadets}, one can check that these are exactly the ideal cochains of the terms in \eqref{flagtoBhat}.

\begin{figure}[ht]
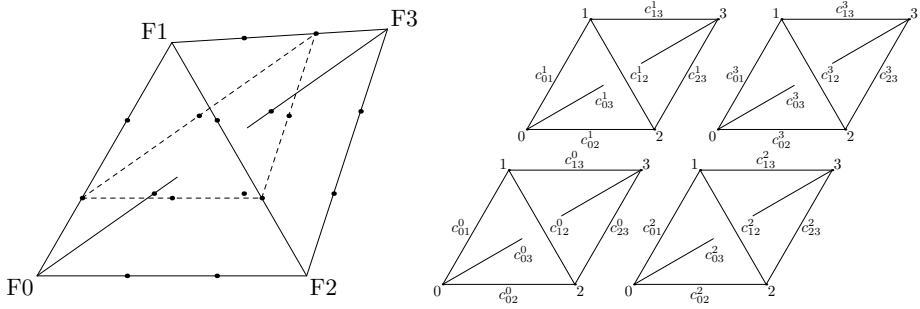

\centering
\begin{minipage}[c]{0.47\textwidth}
 \text{\hspace{-1.2cm}}\includegraphics[width=5.5cm]{K3Bhat.3}
\end{minipage}
\hspace{-1.7cm}
\begin{minipage}[c]{0.15\textwidth}
 \text{\hspace{1.12cm}}\includegraphics[width=2.8cm]{K3Bhat.5}
 \includegraphics[width=2.9cm]{K3Bhat.4}
\end{minipage}
\quad\enspace
\begin{minipage}[c]{0.2\textwidth}
  \text{\hspace{1.12cm}}\includegraphics[width=2.8cm]{K3Bhat.7}
 \includegraphics[width=2.9cm]{K3Bhat.6}
\end{minipage}
\caption{The ideal cochains on the simplices $\Delta_i$ arising from a labeling of marked points in $\Delta$. The dashed lines mark the bottom of~$\Delta_1$.}\label{flagfigure}
\end{figure}

\begin{remark}\label{flags} An ordered basis determines an affine flag, and one easily checks that $\widehat\lambda$ only depends on the underlying affine flags.
\end{remark}

If $\tau\in (CF_3)_{\SL(3,F)}$ is a cycle, we can represent $\tau$ by a $3$--cycle $K$ together with a labeling of the marked points in each of the simplices of $K$. Identified points acquire the same labeling. From the geometric description of the map $\widehat\lambda$, it follows that we can represent $\widehat\lambda(\tau)$ by a $3$--cycle $C$ with boundary together with an ideal cochain on $C$. Note that $C$ is homeomorphic to the disjoint union of the cones on the links of the $0$--cells of $K$.

\begin{lemma}\label{actiononedgeandface}
The restriction of $\widehat\lambda$ to cycles in $(CF_3)_{\SL(3,F)}$ is independent of the choice of logarithm. In fact, we may choose different logarithms for each of the marked points as long as we use the same logarithm for identified points.
\end{lemma}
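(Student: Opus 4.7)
The plan is to use the geometric reformulation of $\widehat\lambda(\tau)$ established in the preceding paragraphs: write $\widehat\lambda(\tau) = \widehat\sigma(\tilde c)$, where $\tilde c$ is a lift of the associated ideal cochain on a $3$-cycle $C$ whose boundary is the disjoint union of the links of the $0$-cells of $K$. A change of logarithm modifies $\tilde c$ by an integer cochain $\alpha \in C^1(C;\Z)$, and the goal is to show $\widehat\sigma(\tilde c + \alpha) = \widehat\sigma(\tilde c)$ in $\widehat\B_E(F)$. By linearity, it suffices to treat the case where only one section value $\log(a)$, $a \in F^*$, is adjusted by one.

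The marked points of a simplex $\Delta$ of $K$ split into two geometric types. \emph{Edge points} label the \emph{interior} $1$-cells of $C$: namely, the cone edges running from the cone point down to a link vertex inside one of the cones over the links of $\F 0, \F 1, \F 2, \F 3$. \emph{Face points} on a face $\{j,k,l\}$ of $\Delta$ label three \emph{boundary} $1$-cells, one inside each of the cones over the links of $\F j$, $\F k$, $\F l$ (these being edges of triangles in $\partial C$). This split is visible directly from \eqref{alphadets}: for the cone-edge label the argument of the determinant contains a second-basis vector $\F i_2$, whereas for the link-edge label all three factors are first-basis vectors.

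For edge-point labels, Lemma~\ref{sameeltlemma} applies verbatim and gives $\widehat\sigma(\tilde c + \alpha_e) = \widehat\sigma(\tilde c)$ in $\widehat\Pre_E(F)$, so these contributions are zero. For a face point on a face $\{j,k,l\}$ of $\Delta$, I would apply Lemma~\ref{Rformula} to each of the three affected flattened tetrahedra, namely terms $j$, $k$, $l$ of \eqref{flagtoBhat}, and use Lemma~\ref{chihomo} to write the combined change from $\Delta$ as $\chi(S_\Delta)$ for some $S_\Delta \in E$. A direct expansion using \eqref{alphadets}--\eqref{alphaef} shows that the contributions involving the fourth flag $\F m$ of $\Delta$ cancel, so $S_\Delta$ depends only on the flags $(\F j, \F k, \F l)$ at the face. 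The cycle condition $\partial \tau = 0$ in $(CF_2)_{\SL(3,F)}$ then pairs $\Delta$ with the unique adjacent simplex $\Delta'$ of $K$ glued along $\{j,k,l\}$, with opposite orientation sign; since $S_{\Delta'} = S_\Delta$, the two $\chi(S_\Delta)$ contributions cancel in $\widehat\Pre_E(F)$.

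The main obstacle is the face-point bookkeeping: one must verify that $S_\Delta$ is genuinely independent of the fourth flag and transforms correctly when comparing the standard ordering of $\Delta$ to the (possibly permuted) vertex ordering of $\Delta'$ along the glued face, so that the orientation signs supplied by $\partial\tau = 0$ really produce cancellation. This reduces to an explicit determinantal identity that is essentially the algebraic shadow of the manipulation behind Lemma~\ref{boundary}, and its verification is routine once the log-parameters of the three affected terms are expanded via \eqref{alphaef}.
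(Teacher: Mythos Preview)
Your strategy matches the paper's: edge points are handled by Lemma~\ref{sameeltlemma}, and for a face point you compute the change via Lemma~\ref{Rformula} as some $\chi(S_\Delta)$, observe that $S_\Delta$ depends only on the three flags on that face, and then cancel in pairs using $\partial\tau=0$. The paper carries this out explicitly: it expands $S_\Delta$ for the face opposite vertex~$0$ as a signed sum of the six edge-point logarithms on that face (plus a stray $\chi(1)$), records in a sign table how this pattern depends on which vertex is omitted, and then observes that ordered face-gluings make the two contributions cancel.

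One caution: your line ``$S_{\Delta'}=S_\Delta$, opposite orientation sign, hence cancel'' is not quite right as stated. When face $i$ of $\Delta$ is glued to face $i'$ of $\Delta'$ the cycle condition gives $\varepsilon_\Delta(-1)^i=-\varepsilon_{\Delta'}(-1)^{i'}$, not $\varepsilon_\Delta=-\varepsilon_{\Delta'}$, and the expression $S_\Delta$ itself depends on which vertex is omitted, not just on the face data. The cancellation therefore genuinely requires the case-by-case sign check that the paper does; this is exactly what you flag as the ``main obstacle,'' so your outline is sound, but the verification is not a corollary of Lemma~\ref{boundary} --- it is a separate direct computation.
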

\begin{proof}
For edge points this follows from Lemma~\ref{sameeltlemma}. A face point occurs in exactly $3$ of the simplices $\Delta_i$. Consider the face point opposite vertex $0$, and suppose the flattenings of $\Delta_1$, $\Delta_2$ and $\Delta_3$ are $(e,f)$, $(e',f')$ and $(e'',f'')$. If we add $1$ to the logarithm of the face point, it follows from \eqref{alphaef} that the flattenings become $(e,f+1)$, $(e'-1,f'-1)$ and $(e''+1,f'')$. By Lemma~\ref{Rformula}, this changes the element in $\widehat\Pre_E(F)$ by $\chi(e-e'+f'-f''+1)$. Keep in mind that $\chi(1)=-\chi(1)$. Using \eqref{alphaef}, we have
\begin{equation}\label{facepoint}
\begin{aligned}
e-e'+f'-f''&=(0_1,1_1,3_1)+(1_1,1_2,2_1)-(0_1,1_1,2_1)-(1_1,1_2,3_1)\\
&\enspace-\big((0_1,2_1,3_1)+(1_1,2_1,2_2)-(0_1,2_1,2_2)-(1_1,2_1,3_1)\big)\\
&\enspace+(0_1,1_1,2_1)+(2_1,3_1,3_2)-(0_1,2_1,2_2)-(1_1,2_1,3_1)\\
&\enspace-\big((0_1,1_1,3_1)+(2_1,3_1,3_2)-(0_1,2_1,3_1)-(1_1.3_1,3_2)\big)\\
&=\big((1_1,1_2,2_1)-(1_1,2_1,2_2)\big)+\big((1_1,3_1,3_2)-(1_1,1_2,3_1)\big)\\
&\enspace+\big((2_1,2_2,3_1)-(2_1,3_1,3_2)\big),
\end{aligned}
\end{equation}
where $(i_j,k_l,m_n)$ denotes $\log\det(\F i_j,\F k_l,\F m_n)$. 
Note that each term is a logarithm of one of the six edge points on the face opposite vertex $0$. By a similar calculation one can check that this holds in general, i.e.~the change in the $\widehat\Pre_E(F)$ element when adding $1$ to the logarithm of a face point, is a signed sum of logarithms of the edge points on that face (plus $\chi(1)$). The signs are shown in Figure~\ref{signs}.

\begin{figure}[ht]
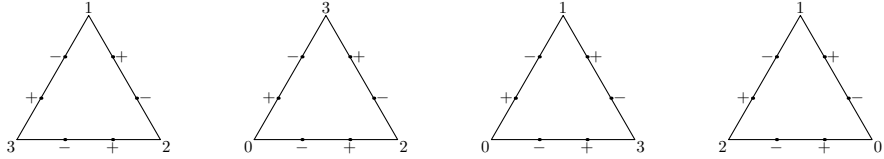

\centering
\begin{minipage}[c]{0.24\textwidth}
\includegraphics[width=2.2cm]{K3Bhat.8}
\end{minipage}
\begin{minipage}[c]{0.24\textwidth}
\includegraphics[width=2.2cm]{K3Bhat.9}
\end{minipage}
\begin{minipage}[c]{0.24\textwidth}
\includegraphics[width=2.2cm]{K3Bhat.10}
\end{minipage}
\begin{minipage}[c]{0.24\textwidth}
\includegraphics[width=2.2cm]{K3Bhat.11}
\end{minipage}
\caption{Change in the $\widehat\Pre_E(F)$ element when adding $1$ to the logarithm of a face point. There is a contribution for each edge point on the given face.}\label{signs}
\end{figure}
In a cycle, each face point lies in exactly two simplices, and since the face pairings preserve orderings, it follows from Figure~\ref{signs} that the changes in the element in $\widehat\Pre_E(F)$, resulting from adding $1$ to the logarithm, appear with opposite signs. 
\end{proof}

\begin{lemma}\label{boundarytozero} $\widehat \lambda$ takes boundaries in $CF_3$ to $0\in \widehat \Pre_E(F)$. 
\end{lemma}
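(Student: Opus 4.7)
The plan is to show directly that $\widehat\lambda\bigl(\partial(\F 0,\F 1,\F 2,\F 3,\F 4)\bigr)=0\in\widehat\Pre_E(F)$ for every generator $(\F 0,\dots,\F 4)\in CF_4$. Expanding, this is a signed sum of $5\times 4=20$ flattenings, indexed by ordered pairs $(i,k)$ with $i\neq k$ in $\{0,1,2,3,4\}$: here $i$ is the index of the flag deleted by $\partial$, $k$ is the ``distinguished vertex'' in the four-term expansion of $\widehat\lambda$ on the remaining face, the $(i,k)$-term is $(\F 0_1,\dots,\F k_2,\dots,\widehat{\F i},\dots,\F 4_1)^{p(i,k)}_{\F k_1}$ with $p(i,k)=k$ if $k<i$ and $p(i,k)=k-1$ if $k>i$, and the sign is $(-1)^i$ inherited from $\partial$.

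I would then regroup the 20 terms by fixing $k$ and summing over $i\neq k$. The key observation is that $\partial^k(\F 0_1,\dots,\F k_2,\dots,\F 4_1)_{\F k_1}$ in the sense of Lemma~\ref{boundaryi} is a sum of five terms, one for each entry $v_j$ deleted, carrying sign $(-1)^j$ and superscript $k-1$ if $j<k$ or $k$ if $j\geq k$. Identifying $j$ with $i$, the prescription of superscripts and signs agrees exactly with the $(i,k)$-terms of the $k$-group, except that $\partial^k$ additionally contains the ``diagonal'' contribution $j=k$ (which deletes the distinguished entry $\F k_2$). Since Lemma~\ref{boundaryi} gives $\partial^k=0$ in $\widehat\Pre_E(F)$, the $k$-group therefore equals the negative of the missing diagonal term, namely
\[
(-1)^{k+1}(\F 0_1,\dots,\widehat{\F k_1},\dots,\F 4_1)^{k}_{\F k_1}.
\]

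Summing these leftover terms over $k=0,\dots,4$ yields
\[
-\sum_{k=0}^{4}(-1)^k(\F 0_1,\dots,\widehat{\F k_1},\dots,\F 4_1)^{k}_{\F k_1}=-\partial(\F 0_1,\F 1_1,\F 2_1,\F 3_1,\F 4_1)
\]
in the notation of Lemma~\ref{boundary}, and this vanishes in $\widehat\Pre_E(F)$. Thus $\widehat\lambda$ kills boundaries. The main obstacle is purely combinatorial bookkeeping: verifying that the 20 terms regroup into the four non-diagonal parts of five distinct relations of type $\partial^k$ (with the matching of superscripts $p(i,k)$ and $k_j$ being the delicate point), and that the five leftover diagonal terms assemble precisely into a single relation of type $\partial$. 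No further cross-ratio identities beyond those encoded in Lemmas~\ref{boundaryi} and~\ref{boundary} are needed.
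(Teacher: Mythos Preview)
Your proposal is correct and follows essentially the same route as the paper's proof. The paper writes out the twenty flattenings explicitly in a $4\times 5$ array, then applies Lemma~\ref{boundaryi} to collapse the array to the five ``diagonal'' terms, which form $-\partial(0_1,1_1,2_1,3_1,4_1)$ and vanish by Lemma~\ref{boundary}; your regrouping by the distinguished flag $k$ is exactly the implicit grouping the paper uses, with the bookkeeping of superscripts $p(i,k)$ made explicit.
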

\begin{proof}
Using \eqref{flagtoBhat}, we see that $\widehat\lambda(\partial(\F0,\dots,\F4))\in \widehat \Pre_E(F)$ equals
{\small{
\begin{gather*}
\begin{aligned}
&+(1_2,2_1,3_1,4_1)_1^0-(0_2,2_1,3_1,4_1)_0^0+(0_2,1_1,3_1,4_1)_0^0-(0_2,1_1,2_1,4_1)_0^0+(0_2,1_1,2_1,3_1)_0^0\\
&+(1_1,2_2,3_1,4_1)_2^1-(0_1,2_2,3_1,4_1)_2^1+(0_1,1_2,3_1,4_1)_1^1-(0_1,1_2,2_1,4_1)_1^1+(0_1,1_2,2_1,3_1)_1^1\\
&+(1_1,2_1,3_2,4_1)_3^2-(0_1,2_1,3_2,4_1)_3^2+(0_1,1_1,3_2,4_1)_3^2-(0_1,1_1,2_2,4_1)_2^2+(0_1,1_1,2_2,3_1)_2^2\\
&+(1_1,2_1,3_1,4_2)_4^3-(0_1,2_1,3_1,4_2)_4^3+(0_1,1_1,3_1,4_2)_4^3-(0_1,1_1,2_1,4_2)_4^3+(0_1,1_1,2_1,3_2)_3^3.
\end{aligned}
\end{gather*}}}
Using Lemma~\ref{boundaryi}, this simplifies to 
\begin{multline*}
-(1_1,2_1,3_1,4_1)_0^0+(0_1,2_1,3_1,4_1)_1^1-(0_1,1_1,3_1,4_1)_2^2\\+(0_1,1_1,2_1,4_1)_3^3-(0_1,1_1,2_1,3_1)_4^4,
\end{multline*}
which by Lemma~\ref{boundary} is $0\in \widehat \Pre_E(F)$.
\end{proof}

We thus obtain an induced map $\widehat\lambda\colon H_3(\SL(3,F))\to \widehat\Pre_E(F)$. 

\begin{lemma} The image of $\widehat\lambda$ is in $\widehat\B_E(F)$.
\end{lemma}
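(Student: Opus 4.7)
The plan is to factor $\widehat\lambda$ through the complex $I_*$ of lifts of ideal cochains, so that Lemma~\ref{I3Bhat} delivers the required cycle condition. Concretely, I would define a map $\Lambda\colon CF_3 \to I_3$ sending a generator $(\F 0, \F 1, \F 2, \F 3)$ to the sum of the four lifts of ideal cochains $\tilde c^i_{\F i_1}$ on the sub-simplices $\Delta_0, \dots, \Delta_3$ whose images under $\widehat\sigma$ are the four summands of~\eqref{flagtoBhat}. Geometrically, this realizes $\Lambda(\F 0, \F 1, \F 2, \F 3)$ as the determinant labeling $\det(S_\beta)$ of the marked points of the subdivided simplex $\Delta = \bigcup_i \Delta_i$ described preceding Figure~\ref{flagfigure}. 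By construction $\widehat\lambda = \widehat\sigma \circ \Lambda$, and Lemma~\ref{I3Bhat} then gives $\widehat\nu(\widehat\lambda(\tau)) = \mu(\partial \Lambda(\tau))$, so it suffices to show that $\partial \Lambda(\tau) = 0 \in I_2$ for every cycle $\tau$.

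Next, I would argue that for $\tau \in Z_3((CF_*)_{\SL(3,F)})$, represented geometrically by a closed $3$-cycle $K$ with an ordered basis at each vertex, the sub-simplices $\Delta_i$ together with their determinant cochains assemble into a lift of an ideal cochain on a closed $3$-cycle $C$. Here $C$ is obtained from $K$ by subdividing each big simplex into its four pieces $\Delta_i$. Its face identifications come in two kinds: internal ones between adjacent $\Delta_i$ and $\Delta_j$ lying in a fixed $\Delta$, and external ones between sub-simplex faces of $\partial\Delta$ and those of $\partial\Delta'$ coming from the face gluings of $K$. In either case the determinant labels $\det(S_\beta)$ agree on identified faces because they depend only on the set of basis vectors involved, and this set is intrinsic to the shared vertex data. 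Granting this matching, $C$ is a closed $3$-cycle carrying the lift $\Lambda(\tau)$ of an ideal cochain; hence $\partial \Lambda(\tau) = 0 \in I_2$, and the corollary to Lemma~\ref{I3Bhat} yields $\widehat\lambda(\tau) = \widehat\sigma(\Lambda(\tau)) \in \widehat\B_E(F)$.

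The main obstacle is the internal matching inside a single $\Delta$: one must verify by direct inspection of the subdivision pattern in Figure~\ref{flagfigure} that the boundary $2$-faces of the four $\Delta_i$ pair up with opposite signs and with agreeing cochain labels. This is a combinatorial check in the spirit of the verifications already carried out in Lemma~\ref{boundaryi} and Lemma~\ref{boundary}, and it forms the heart of the argument; once this internal bookkeeping is settled, the external matching is automatic from the fact that $\tau$ is a cycle in the coinvariants.
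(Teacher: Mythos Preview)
Your overall strategy---factor $\widehat\lambda$ through $I_3$ and use the commutative right square of Lemma~\ref{I3Bhat}---is exactly the paper's approach. But the key claim fails: the four sub-simplices $\Delta_0,\dots,\Delta_3$ inside a single $\Delta$ do \emph{not} glue to form a closed complex. Geometrically, the $\Delta_i$ are the four corner tetrahedra obtained by truncating $\Delta$ near its vertices; they are pairwise disjoint, separated by a central octahedral piece. Each $\Delta_i$ has one ``truncation face'' (opposite the original vertex $i$) that lies in the interior of $\Delta$ and is not shared with any $\Delta_j$. So there are no internal face identifications at all, and $C$ is not closed; the ``combinatorial check'' you propose at the end would simply fail.

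This is precisely why the paper emphasizes that the map $J$ (your $\Lambda$) is \emph{not} a chain map. For a cycle $\tau$, the external faces do cancel as you say, but the truncation faces survive in $\partial J_3(\tau)$. The paper computes this defect explicitly on a generator:
\[
\delta = (\partial J_3 - J_2\partial)(\F0,\F1,\F2,\F3)
= (1_1,2_1,3_1)_0^0 - (0_1,2_1,3_1)_1^1 + (0_1,1_1,3_1)_2^2 - (0_1,1_1,2_1)_3^3,
\]
which is exactly the sum of the four truncation faces with their cochain labels. The point is not that $\delta=0$ in $I_2$ (it isn't), but that $\mu(\delta)=0$ in $\wedge^2(E)$. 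This is a short direct verification, and combined with $\widehat\nu\circ\widehat\sigma = \mu\circ\partial$ and $\partial\tau=0$ it gives $\widehat\nu(\widehat\lambda(\tau))=\mu(\delta(\tau))=0$. Your argument can be repaired by replacing the false closedness claim with this computation.
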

\begin{proof} 
Consider the sequence of maps $J_n\colon CF_n\to I_n$ given by
\begin{equation}
(\F0,\dots,\F n)\mapsto \sum_{i=0}^n(\F0_1,\dots,\F i_2,\dots,\F n_1)^i_{\F i_1}.
\end{equation}
Note that $J$ is \emph{not} a chain map. By definition, $\widehat\lambda\colon CF_3\to \widehat\Pre_E(F)$ is equal to $\widehat\sigma\circ J_3$, where $\widehat\sigma\colon I_3\to\widehat\Pre_E(F)$ is the map given by \eqref{cijflat}. Consider the diagram
\begin{equation}\label{mudiag}
\cxymatrix{{CF_3\ar[r]^-{J_3}\ar[d]^-\partial&I_3\ar[r]^-{\widehat\sigma}\ar[d]^-\partial&{\widehat\Pre_E(F)}\ar[d]^-{\widehat\nu}\\
CF_2\ar[r]^-{J_2}&I_2\ar[r]^-\mu&\wedge^2(E).}}
\end{equation}
By Lemma~\ref{I3Bhat} the right square is commutative.
Using the usual notational abbreviations, i.e. omitting the $\F$, and shortening subscripts to $i$, a direct computation shows that
\begin{multline}\delta:=(\partial J_3-J_2\partial)(\F0,\F 1,\F 2,\F 3)=\\(1_1,2_1,3_1)_0^0-(0_1,2_1,3_1)_1^1+(0_1,1_1,3_1)_2^2-(0_1,1_1,2_1)_3^3.\end{multline}
One easily checks that $\mu$ takes $\delta$ to $0\in \wedge^2(E)$, and the result follows.
\end{proof}

\begin{lemma}\label{SL2SL3}
The restriction of $\widehat\lambda$ to $H_3(\SL(2,F))$ agrees with the map from the previous section.
\end{lemma}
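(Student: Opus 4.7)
The goal is to show that the diagram
\[
\xymatrix{
H_3(\SL(2,F))\ar[r]^-{\widehat\lambda}\ar[d]_{i_*} & \widehat\B_E(F)\ar@{=}[d] \\
H_3(\SL(3,F))\ar[r]^-{\widehat\lambda} & \widehat\B_E(F)
}
\]
commutes, where $i\colon\SL(2,F)\hookrightarrow\SL(3,F)$ is the block-diagonal inclusion fixing $e_3$, and the two $\widehat\lambda$'s are defined via the distinct resolutions $C_*(F^2)$ and $CF_*$ respectively. The strategy is to compare the two resolutions directly and verify the equality on chain-level representatives.

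First, observe that $CF_*$, viewed via the inclusion $i$ as a complex of $\SL(2,F)$-modules, is still a free $\SL(2,F)$-resolution of $\Z$: it is acyclic since $F$ is infinite, and the $\SL(2,F)$-action on bases in general position is free. Thus both $C_*(F^2)$ and $CF_*$ are free $\SL(2,F)$-resolutions of $\Z$, so there exists an $\SL(2,F)$-equivariant chain map $T\colon C_*(F^2)\to CF_*$, unique up to $\SL(2,F)$-equivariant chain homotopy. The induced map on $\SL(2,F)$-coinvariants realizes the identity on $H_*(\SL(2,F))$, and postcomposing with the projection $(CF_*)_{\SL(2,F)}\to (CF_*)_{\SL(3,F)}$ recovers $i_*$. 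Hence it suffices to exhibit such a $T$ for which $\widehat\lambda\circ T=\widehat\sigma\circ\Gamma$ at the chain level.

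Guided by Remark~\ref{flags}, I would build $T$ by associating to each $v\in F^2\setminus\{0\}$ an ordered basis of $F^3$ whose underlying affine flag has line $\langle\iota(v)\rangle$ and plane $\langle\iota(v),e_3\rangle$; both transform correctly under $\SL(2,F)$ since $g\cdot e_3=e_3$ and $g\cdot\iota(v)=\iota(gv)$. Extending to higher-degree generators inductively produces the equivariant chain map. Then I would compute $\widehat\lambda\bigl(T(v_0,v_1,v_2,v_3)\bigr)$ directly from \eqref{flagtoBhat}. Each of the four terms $(\F0_1,\ldots,\F i_2,\ldots,\F3_1)^i_{\F i_1}$ is, by the cochain formula \eqref{alphadets}, obtained from $3\times 3$ determinants involving $\iota(v_j)$ and $e_3$. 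These collapse to $2\times 2$ determinants in $F^2$ of the form $\log\det(v_j,v_k)$ (up to a sign from row operations), matching the $I_3$-data produced by $\Gamma$ on $(v_0,v_1,v_2,v_3)$.

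The main obstacle is the degeneracy that arises because the $\SL(2,F)$-equivariant flag planes all share the axis $\langle e_3\rangle$: triple products $\det(\iota(v_i),\iota(v_j),\iota(v_k))$ vanish, so naive evaluation of some of the four terms is ill-defined. To handle this, I would either perturb $T$ to a non-equivariant chain map obtained by varying the flags generically and compare it with the equivariant one via an explicit $\SL(2,F)$-equivariant chain homotopy (the output in $\widehat\B_E(F)$ does not depend on this perturbation by the standard argument), or interpret the degenerate terms symbolically and show, using Lemma~\ref{boundaryi} and Lemma~\ref{boundary} together with the five-term and cycle relations of Lemma~\ref{sameeltlemma}, that three of the four terms combine to cancel, leaving precisely the single flattening $\widehat\sigma\circ\Gamma(v_0,v_1,v_2,v_3)$. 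The second step is essentially a bookkeeping calculation once the correct perturbation scheme is in place, and the independence from the choice of logarithm (Lemma~\ref{actiononedgeandface}) ensures that the result lies unambiguously in $\widehat\B_E(F)$.
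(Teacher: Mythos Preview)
Your overall plan---compare two free $\SL(2,F)$-resolutions via an equivariant chain map and check at the chain level---is the right shape, and you correctly identify the central obstacle: any $\SL(2,F)$-equivariant assignment $v\mapsto(\text{ordered basis of }F^3)$ forces all flag planes through the fixed line $\langle e_3\rangle$, so the resulting bases are never in general position and the map does not land in $CF_*$. Your proposed fixes (perturb non-equivariantly, or argue symbolically) are plausible but not worked out, and your guess that ``three of the four terms combine to cancel, leaving precisely the single flattening'' is not what actually happens.

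The paper sidesteps the degeneracy by reversing the direction of the comparison. Rather than mapping $C_*(F^2)\to CF_*$, it introduces the subcomplex $D_*\subset CF_*$ of those tuples of bases whose first vectors project (under $p\colon F^3\to F^2$) to tuples in general position; $D_*$ is an acyclic free $\SL(2,F)$-complex, so it also computes $H_*(\SL(2,F))$, and the projection $\Psi\colon D_*\to C_*(F^2)$, $(\F0,\dots,\F n)\mapsto(p\F0_1,\dots,p\F n_1)$, is equivariant with no genericity issues. The task is then to show that $\widehat\lambda$ and $\widehat\tau\circ\Psi$ (where $\widehat\tau$ is the map of Section~\ref{idealcochainSection}) differ on $D_3$ by an explicit coboundary. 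The computation introduces an auxiliary generic vector $w$, subtracts from $\widehat\lambda(\F0,\F1,\F2,\F3)$ the four five-term relations $\partial^1(w,0_2,1_1,2_1,3_1)_0$, $\partial^2(w,0_1,1_2,2_1,3_1)_1$, $\partial^3(w,0_1,1_1,2_2,3_1)_2$, $\partial^4(w,0_1,1_1,2_1,3_2)_3$ of Lemma~\ref{boundaryi}, and obtains sixteen remaining terms. Four ``diagonal'' terms assemble via Lemma~\ref{boundary} into the single flattening $(0_1,1_1,2_1,3_1)^0_w=\widehat\tau\circ\Psi(\F0,\F1,\F2,\F3)$, while the remaining twelve are exactly $\phi\circ\partial(\F0,\F1,\F2,\F3)$ for an explicit $\phi\colon D_2\to\widehat\Pre_E(F)$. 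So the discrepancy is a coboundary, not zero, and it becomes visible only after inserting the auxiliary $w$ via the $\partial^i$ relations---an ingredient your sketch does not anticipate.
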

\begin{proof}
We consider $F^2$ as a subspace of $F^3$ using the inclusion $(x,y)\mapsto (0,x,y)$. 
Let $p\colon F^3\to F^2$ be the natural projection, and let $D_*$ be the subcomplex of $CF_*$ consisting of tuples $(\F0,\dots,\F n)$ such that $(p\F0_1,\dots ,p\F n_1)\in C_n(F^2)$.
Note that $D_*$ is an acyclic $\SL(2,F)$--complex, where $\SL(2,F)$ is regarded as a subgroup of $\SL(3,F)$ in the natural way. 
Consider the $\GL(2,F)$--equivariant map
\begin{equation}
\Psi\colon D_*\to C_*(F^2),\quad (\F0,\dots,\F n)\mapsto (p\F0_1,\dots,p\F n_1).
\end{equation}
Let $\widehat\tau$ denote the map $C_3(F^2)\to \widehat\Pre_E(F)$ from Section~\ref{idealcochainSection}. We wish to prove that $\widehat\tau\circ\Psi$ and $\widehat\lambda$ differ by a coboundary. Note that $(0_1,1_1,2_1,3_1)^0_w=\widehat\tau\circ\Psi(\F0,\F1,\F2,\F3)$.

By definition, $\widehat\lambda$ takes  $(\F0,\F1,\F2,\F3)\in D_3$ to
\begin{equation*}(0_2,1_1,2_1,3_1)_0^0+(0_1,1_2,2_1,3_1)_1^1+(0_1,1_1,2_2,3_1)_2^2+(0_1,1_1,2_1,3_2)_3^3.
\end{equation*}
We may subtract boundaries without effecting the image in $\widehat\Pre_E(F)$, and after subtracting
\begin{multline*}
\partial^1(w,0_2,1_1,2_1,3_1)_0+\partial^2(w,0_1,1_2,2_1,3_1)_1\\+\partial^3(w,0_1,1_1,2_2,3_1)_2+\partial^4(w,0_1,1_1,2_1,3_2)_3,
\end{multline*}
the remaining terms become
\begin{gather*}
\begin{aligned}
&(w,1_1,2_1,3_1)_0^1-(w,0_2,2_1,3_1)^1_0+(w,0_2,1_1,3_1)_0^1-(w,0_2,1_1,2_1)_1^1\\
&(w,1_2,2_1,3_1)^1_1-(w,0_1,2_1,3_1)_1^2+(w,0_1,1_2,3_1)^2_1-(w,0_1,1_2,2_1)^2_1\\
&(w,1_1,2_2,3_1)^2_2-(w,0_1,2_2,3_1)^2_2+(w,0_1,1_1,3_1)_2^3-(w,0_1,1_1,2_2)^3_2\\
&(w,1_1,2_1,3_2)^3_3-(w,0_1,1_1,3_2)_3^3+(w,0_1,1_1,3_2)_3^3-(w,0_1,1_1,2_1)^4_3.
\end{aligned}
\end{gather*}

By Lemma~\ref{boundary} the diagonal terms sum to $(0_1,1_1,2_1,3_1)^0_w=\widehat\tau\circ\Psi(\F0,\F1,\F2,\F3)$.
If we define $\phi\colon D_2\to \widehat\Pre_E(F)$ by
\[\phi(\F0,\F1,\F2)=(w,0_2,1_1,2_1)^1_0+(w,0_1,1_2,2_1)_1^2+(w,0_1,1_1,2_2)_2^3,\]
the remaing terms are easily seen to equal $\phi\circ\partial (\F0,\F1,\F2,\F3)$. 
Hence, $\widehat\tau\circ\Psi$ and $\widehat\lambda$ differ by a coboundary as desired.
\end{proof}

\begin{remark}\label{GL2GL3} As in Remark~\ref{GL2}, the map $\pi\circ\widehat\lambda\colon CF_3\to\Pre(F)$ is $\GL(3,F)$ invariant, and induces a map $H_3(\GL(3,F))\to\B(F)$, which factors through the homology of the complex $PCF_*$ of projective bases of $F^3$. The proof of Lemma~\ref{SL2SL3} shows that the map $H_3(\GL(3,F))\to\B(F)$ agrees with the map in Remark~\ref{GL2}, and that the map $H_3(\GL(2,F))\to\B(F)$ lifts to $\widehat\B_E(F)$ via $H_3(\SL(3,F))$ and the stabilization map $\GL(2,F)\to\SL(3,F)$.
\end{remark}

\subsubsection{Extension to $H_3(\GL(n,F))$} In Garoufalidis--Thurston--Zickert~\cite{GaroufalidisThurstonZickert} we construct maps $H_3(\SL(n,F))\to\widehat\B_E(F)$ commuting with the stabilization maps. These maps are induced by an $\SL(n,F)$--invariant map $\widehat\lambda\colon CF^n_3\to \widehat\Pre_E(F)$, where $CF^n_*$ is the complex of ordered bases of $F^n$ (or affine flags, c.f.~Remark~\ref{flags}). Remark~\ref{GL2GL3} generalizes, i.e.~the maps
\begin{equation}
H_3(\GL(n,F))\to H_3(\SL(n+1,F))\to\widehat\B_E(F)
\end{equation} 
commute with stabilization. Hence, by~\eqref{MaponK}, we obtain a map $K_3(F)\to\widehat\B_E(F)$.
If $F$ is free (and infinite), $\widehat\lambda$ commutes with the maps $\Psi_{xy}$ from Proposition~\ref{extmap}, so $\widehat\lambda$ induces a map $K_3(F)\to\widehat\B(F)$. The map $\widehat\lambda$ commutes with Galois actions, and respects the maps induced by embeddings in $\C$. This implies that the regulators \eqref{regonK3} and \eqref{regonBhat} agree.

\subsection{Step two: $K_3^M(F)$ maps to zero}\label{steptwo}

From now on, we assume that $F$ is a free field admitting an embedding in $\C$. This is used in Proposition~\ref{compositioneq0} but not in Lemma~\ref{agreelemma}.

\begin{lemma}\label{agreelemma} The composition 
\[\xymatrix{{H_3(\GL(3,F))}\ar[r]^-{\widehat\lambda}&{\widehat\B_E(F)}\ar[r]^-\pi&{\B(F)}}\]
agrees with the map constructed by Suslin~\cite[Section 3]{Suslin}.
\end{lemma}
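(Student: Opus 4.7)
The plan is to invoke Remarks~\ref{GL2} and~\ref{GL2GL3}, which together reduce the proof to comparing both maps on a common quotient, namely $H_3(C_*(P^1(F))_{\GL(2,F)})$, where each is given by the classical cross-ratio.

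In detail, Remark~\ref{GL2GL3} asserts that $\pi \circ \widehat\lambda\colon H_3(\GL(3,F)) \to \B(F)$ factors through the homology of the projective-bases complex $PCF_*$, and that its value on the image of the stabilization $H_3(\GL(2,F)) \to H_3(\GL(3,F))$ coincides with the $\GL(2,F)$-version from Remark~\ref{GL2}. By Lemma~\ref{SL2SL3} combined with formula~\eqref{detformula}, this $\GL(2,F)$-version sends a tuple $(v_0,v_1,v_2,v_3) \in C_3(F^2)$ to the cross-ratio class $[z] \in \Pre(F)$, and Remark~\ref{GL2} records that it factors through $H_3(C_*(P^1(F))_{\GL(2,F)})$. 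On the other hand, Suslin's map $H_3(\GL(F)) \to \B(F)$ is constructed via his stability theorem together with the spectral sequence for the $\GL(3,F)$-action on $P^1(F)$; its edge homomorphism factors through the same quotient $H_3(C_*(P^1(F))_{\GL(2,F)})$, and is given there by the cross-ratio as well. Both maps therefore agree on this common quotient, hence on $H_3(\GL(3,F))$.

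The main work is to trace through Suslin's spectral sequence carefully enough to see that its edge homomorphism is indeed the cross-ratio and that its factorization is compatible with the stabilization identification $H_3(\GL(F)) \cong H_3(\GL(3,F))$. This is essentially bookkeeping once the common quotient description and the explicit cross-ratio formula are in hand, and forms the only step not already recorded in the preceding remarks.
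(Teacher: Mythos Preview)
Your argument establishes agreement of the two maps on the image of the stabilization $H_3(\GL(2,F)) \to H_3(\GL(3,F))$, but it does not cover the rest of $H_3(\GL(3,F))$. The stabilization map is not surjective in general---its cokernel is essentially the Milnor $K_3$---so knowing both maps agree on a common $\GL(2,F)$-based quotient is not enough. Concretely, Remark~\ref{GL2GL3} says $\pi\circ\widehat\lambda$ factors through the homology of the complex $PCF_*$ of projective bases of $F^3$, which is \emph{not} the same as $H_3(C_*(P^1(F))_{\GL(2,F)})$; your passage from one to the other is unjustified. Likewise, the phrase ``the spectral sequence for the $\GL(3,F)$-action on $P^1(F)$'' does not describe Suslin's construction, and there is no evident factorization of his map on all of $H_3(\GL(3,F))$ through the $\GL(2,F)$-quotient you name.

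The paper closes this gap by invoking an additional result of Suslin: $H_3(\GL(3,F))$ is generated by the image of $H_3(\GL(2,F))$ together with $H_3(T)$, where $T$ is the diagonal torus. On the $\GL(2,F)$ part the two maps agree by Remark~\ref{GL2}, exactly as you argue. On $H_3(T)$ both maps vanish: for $\pi\circ\widehat\lambda$ this is because $T$ acts trivially on projective bases, so $PCF_0\to\Z$ admits a $T$-equivariant section; for Suslin's map it is his Proposition~3.1. This decomposition into $\GL(2)$-part plus torus-part is the missing idea in your proposal, and the ``bookkeeping'' you defer to the end is in fact where the real content lies.
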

\begin{proof}
By a result of Suslin~\cite{Suslin1046}, $H_3(\GL(3,F))$ is generated by $H_3(\GL(2,F))$ and $H_3(T)$. By Remark~\ref{GL2} the two maps on $H_3(\GL(2,F))$. By Remark~\ref{GL2GL3} the map $\pi\circ\widehat\lambda$ factors through the complex $PCF_*$ of projective bases. Since $T$ acts trivially on projective bases, $PCF_0\to \Z$ has a $T$--equivariant section, and it follows that $\pi\circ\widehat\lambda$ is $0$ on $H_3(T)$. By Suslin~\cite[Proposition~3.1]{Suslin}, this also holds for Suslin's map. Hence, the two maps agree. 
\end{proof}

\begin{prop}\label{compositioneq0}
The composition $K_3^M(F)\to K_3(F)\to \widehat\B(F)$ is $0$.
\end{prop}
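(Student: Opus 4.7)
The plan is to reduce the vanishing to a regulator computation in $\widehat{\B}(\C)$.

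First, Lemma~\ref{agreelemma} says that the composition $K_3^M(F) \to K_3(F) \to \widehat{\B}(F) \xrightarrow{\pi} \B(F)$ coincides with Suslin's map; by the exact sequence~\eqref{Suslinexact}, Suslin's map factors through $K_3^{\ind}(F)$ and therefore vanishes on $K_3^M(F)$. Exactness of~\eqref{exactforfreefields} then forces the image of $K_3^M(F) \to \widehat{\B}(F)$ to land in $\widetilde{\mu_F}$.

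Second, fix an embedding $\sigma\colon F \hookrightarrow \C$, available by the standing hypothesis on $F$. By Proposition~\ref{injectiveontor}, $\sigma_*$ restricts to an injection $\widetilde{\mu_F} \hookrightarrow \mu_\C \subset \widehat{\B}(\C)$, so by naturality of $\widehat\lambda$ under embeddings (Section~\ref{H3SL3}) it suffices to show that the image of $K_3^M(F)$ in $\widehat{\B}(\C)$ vanishes.

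Third, I would apply the regulator $R\colon \widehat{\B}(\C) \to \C/4\pi^2\Z$. By the final paragraph of Section~\ref{H3SL3}, the composition $K_3(\C) \to \widehat{\B}(\C) \xrightarrow{R} \C/4\pi^2\Z$ agrees with the Cheeger--Chern--Simons regulator on $K_3(\C)$, which is well known to be zero on $K_3^M(\C)$ (as recalled in Section~\ref{AlgKreview}). On the other hand, Remark~\ref{CstarandEmod2} identifies the restriction of $R$ to $\chi(\C/4\pi i\Z) \subset \widehat{\B}(\C)$ with multiplication by $-\pi i$, which is an isomorphism $\C/4\pi i\Z \xrightarrow{\cong} \C/4\pi^2\Z$ and hence injective on $\widetilde{\mu_\C} \subset \chi(\C/4\pi i\Z)$. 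Combining these two facts forces the image of $K_3^M(F)$ in $\widetilde{\mu_\C}$, and thus in $\widetilde{\mu_F}$, to be zero.

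No single step should present a serious obstacle---the argument is essentially a diagram chase---but the bookkeeping needs care: one must verify that $\widehat\lambda$ is natural in $F$, that its composition with $R$ recovers the Cheeger--Chern--Simons regulator on the nose, and that the restriction of $R$ remains injective on the torsion subgroup. All three compatibilities have already been recorded in the preceding sections, so no genuinely new ingredient is required.
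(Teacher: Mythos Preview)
Your proposal is correct and follows essentially the same route as the paper: use Lemma~\ref{agreelemma} together with Suslin's sequence to force the image into $\widetilde{\mu_F}$, push into $\widehat\B(\C)$ via an embedding (injective on $\widetilde{\mu_F}$ by Proposition~\ref{injectiveontor}), and then kill it with the regulator $R$, which is injective on $\mu_\C$ and vanishes on the image of $K_3^M(\C)$. The only cosmetic difference is that the paper cites Theorems~\ref{Sahthm} and~\ref{H3eqBhat} directly for the last step rather than the compatibility statement at the end of Section~\ref{H3SL3}.
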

\begin{proof}
Let $\sigma\colon F\to \C$ be an embedding and let $\sigma_*\colon\widehat\B(F)\to\widehat\B(\C)$ be the induced map. 
By Lemma~\ref{agreelemma}, the image of $K_3^M(F)$ in $\widehat\B(F)$ is in $\widetilde{\mu_F}$. 
By Proposition~\ref{injectiveontor}, $\sigma_*$ maps $\widetilde{\mu_F}$ injectively to $\mu_\C$, and since the regulator $R$ is injective on $\mu_\C$, it is enough to prove that the composition
\[\xymatrix{{K_3^M(F)}\ar[r]&{\widetilde{\mu_F}}\ar[r]&{\mu_\C}\ar[r]^-R&{\C/4\pi^2\Z}}\]
is zero. Since this factors through $K_3^M(\C)$, the result follows from Theorem~\ref{Sahthm} and Theorem~\ref{H3eqBhat}.\end{proof}

\subsection{Step three: A five lemma argument}\label{stepthree}

\begin{lemma}\label{relationtoSuslin}
The exact sequences \eqref{Suslinexact} and \eqref{exactforfreefields} fit together in a diagram
\[\xymatrix{0\ar[r]&{\widetilde{\mu_F}}\ar@{=}[d]\ar[r]&{K_3^{\ind}(F)}\ar[r]\ar[d]^-{\widehat\lambda}&{\B(F)}\ar@{=}[d]\ar[r]&0
\\0\ar[r]&{\widetilde{\mu_F}}\ar[r]&{\widehat\B(F)}\ar[r]&{\B(F)}\ar[r]&0.}\]
\end{lemma}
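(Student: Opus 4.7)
The rows are already exact independently (the top is Suslin's sequence~\eqref{Suslinexact} and the bottom is Proposition~\ref{injectiveontor}), so the plan is to verify three things: (i) that $\widehat\lambda$ descends from $K_3(F)$ to $K_3^{\ind}(F)$; (ii) that the right square commutes; (iii) that the left square commutes. The outer vertical identities make naturality automatic once (ii) and (iii) are in hand.

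For (i), the map $\widehat\lambda\colon K_3(F)\to\widehat\B(F)$ built at the end of Section~\ref{H3SL3} kills the image of $K_3^M(F)$ by Proposition~\ref{compositioneq0}, so it factors through $K_3^{\ind}(F)$ by the definition of the indecomposable $K$--group as a cokernel. For (ii), Suslin's map $K_3^{\ind}(F)\to\B(F)$ factors through $H_3(\GL(3,F))$ by Theorem~\ref{Suslinstability}, and on $H_3(\GL(3,F))$ the composition $\pi\circ\widehat\lambda$ coincides with Suslin's map by Lemma~\ref{agreelemma}; the compatibility with stabilization discussed in Section~\ref{H3SL3} then propagates the equality to all of $K_3^{\ind}(F)$.

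The only substantive step is (iii). Let $\iota_S\colon\widetilde{\mu_F}\to K_3^{\ind}(F)$ denote Suslin's torsion inclusion. By (ii) combined with exactness of the top row, $\pi\circ\widehat\lambda\circ\iota_S=0$, so $\widehat\lambda\circ\iota_S$ lands in $\ker(\pi)=\im(\chi)$. Since $\chi$ is injective by Proposition~\ref{injectiveontor}, there is a unique endomorphism $\phi$ of $\widetilde{\mu_F}$ with $\chi\circ\phi=\widehat\lambda\circ\iota_S$, and the goal is to show $\phi=\id$. Both $\iota_S$ and $\chi$ are natural with respect to field embeddings, so given any embedding $\sigma\colon F\to\C$ we obtain a commutative square
\begin{equation*}
\xymatrix{\widetilde{\mu_F}\ar[r]^-\phi\ar[d]_-{\sigma_*}&\widetilde{\mu_F}\ar[d]^-{\sigma_*}\\\mu_\C\ar[r]^-{\phi_\C}&\mu_\C,}
\end{equation*}
and $\sigma_*$ is injective by Proposition~\ref{injectiveontor}. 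It therefore suffices to establish $\phi_\C=\id$.

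For this $F=\C$ reduction, Theorem~\ref{Sahthm} identifies $K_3^{\ind}(\C)$ with $H_3(\SL(2,\C))$, and under this identification Theorem~\ref{torsiondiag} realizes $\iota_S$ as the map induced by the diagonal $\mu_\C\to\SL(2,\C)$. By Lemma~\ref{SL2SL3}, the restriction of $\widehat\lambda$ to $H_3(\SL(2,\C))$ agrees with the ideal-cochain construction of Section~\ref{idealcochainSection}, which in turn is the Goette--Zickert isomorphism $H_3(\SL(2,\C))\cong\widehat\B(\C)$ of Theorem~\ref{H3eqBhat}. I would then compute the image of a canonical generator of $H_3(\mu_\C)$ under the diagonal map followed by $\widehat\lambda$, and check directly that it equals $\chi$ of the corresponding root of unity; this is where the main work lies, since one must unwind two very differently packaged constructions and compare them on a small explicit cycle. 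Once this pointwise match is verified for a generator of each cyclic factor of $\mu_\C$, we conclude $\phi_\C=\id$, hence $\phi=\id$, completing the proof.
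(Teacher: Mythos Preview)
Your outline is correct and tracks the paper's proof almost exactly: both reduce the left square to $F=\C$ via the injectivity of $\sigma_*$ on $\widetilde{\mu_F}$ (Proposition~\ref{injectiveontor}), and then invoke Theorems~\ref{Sahthm} and~\ref{H3eqBhat}. The right square is handled in both cases by Lemma~\ref{agreelemma}.

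The one place you diverge is the final step over $\C$. You propose to verify $\phi_\C=\id$ by an explicit computation on generators of $H_3(\mu_\C)$, and you flag this as ``where the main work lies.'' The paper does not do this computation here; it simply cites Theorems~\ref{Sahthm} and~\ref{H3eqBhat}. The point is that the Goette--Zickert isomorphism $H_3(\SL(2,\C))\cong\widehat\B(\C)$ is already known (from~\cite{GZ}, cf.\ Theorem~\ref{bigdiag}) to carry the torsion inclusion of Theorem~\ref{torsiondiag} to $\chi$, and Sah's identification of $K_3^{\ind}(\C)$ with $H_3(\SL(2,\C))$ matches Suslin's $\iota_S$ with that torsion inclusion. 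So the compatibility $\phi_\C=\id$ is a black-box consequence of those cited results, not something requiring new computation. Your proposed direct verification would also work (and a version of it is carried out later in Section~\ref{TorinBhat} for other purposes), but it is unnecessary for this lemma and you have left it as a placeholder rather than actually performing it. If you want the proof to stand on its own, either do the computation or, as the paper does, observe that it is already contained in the cited theorems.
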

\begin{proof}
Commutativity of the right square follows from Lemma~\ref{agreelemma}.
To prove commutativity of the left square, we proceed as in the proof of Lemma~\ref{compositioneq0}.
Since $\sigma_*$ is injective on $\widetilde{\mu_F}$, it is enough to prove the corresponding result with $F$ replaced by $\C$. The result now follows from Theorem~\ref{Sahthm} and Theorem~\ref{H3eqBhat}. 
\end{proof}

The theorem below summarizes our results.
\begin{thm}\label{generalmainthm} Let $F$ be a free field admitting an embedding in $\C$.
There is a natural isomorphism 
\[\widehat\lambda\colon K_3^{\ind}(F)\cong \widehat\B(F)\]
commuting with Galois actions.\qed
\end{thm}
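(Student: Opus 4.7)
The plan is to apply the five-lemma to the commutative diagram of short exact sequences in Lemma~\ref{relationtoSuslin}: its top row is Suslin's sequence~\eqref{Suslinexact} and its bottom row is the exact sequence from Proposition~\ref{injectiveontor}. Since the outer vertical maps are identities on $\widetilde{\mu_F}$ and on $\B(F)$, the middle map $\widehat\lambda$ must be an isomorphism. Before invoking this, I would check that the map $K_3(F)\to\widehat\B(F)$ constructed via~\eqref{MaponK} descends to $K_3^{\ind}(F)$; this is exactly Proposition~\ref{compositioneq0}. Commutativity of the two squares in Lemma~\ref{relationtoSuslin} has to be verified, but both reduce, via the embedding $\sigma\colon F\to\C$ and injectivity of $\sigma_*$ on $\widetilde{\mu_F}$ (Proposition~\ref{injectiveontor}), to the corresponding assertions over $\C$, where they follow from Theorem~\ref{Sahthm} and Theorem~\ref{H3eqBhat}.

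Next I would verify naturality. Functoriality of $\widehat\lambda$ in $F$ follows from the fact that its chain-level construction uses only the groups $\SL(n,F)$, the complex $CF^n_*$, and a choice of logarithm section $\log\colon F^*\to E$; independence from this choice was already established in Lemma~\ref{actiononedgeandface}. The Galois action on $\widehat\B(F)$ supplied by Proposition~\ref{Galoisaction} is, by definition, induced by the coverings $\widehat\tau_x\colon E_x\to E_{\tau(x)}$ of Corollary~\ref{embaction}, while the action on $K_3^{\ind}(F)$ is the obvious functorial one on homology. Since both the source and target of $\widehat\lambda_{E_x}$ transform naturally under $\tau$ and the maps $\Psi_{xy*}$ commute with $\widehat\lambda$ by construction, Galois equivariance follows directly from the compatibility~\eqref{Psitaueq}.

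The main obstacle in this proof is not the final five-lemma step but everything that feeds into it: the chain-level construction of $\widehat\lambda\colon CF_3\to\widehat\Pre_E(F)$ in Section~\ref{H3SL3}, the combinatorial identities of Lemmas~\ref{boundaryi}, \ref{boundary}, and \ref{boundarytozero} that force $\widehat\lambda$ to descend from $CF_3$ to $H_3(\SL(3,F))$ and to land in $\widehat\B_E(F)$, the agreement with Suslin's map in Lemma~\ref{agreelemma}, and Suslin stabilization (Theorem~\ref{Suslinstability}) together with the extension from $\SL(3,F)$ to $\GL(n,F)$. With these in hand, the proof of Theorem~\ref{generalmainthm} is a routine diagram chase: assemble Lemma~\ref{relationtoSuslin}, note that the flanking maps are identities, and conclude by the five-lemma.
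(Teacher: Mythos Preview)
Your proposal is correct and follows essentially the same approach as the paper: the theorem is stated there as a summary (note the \qed with no separate proof), with the argument being exactly the assembly you describe---construct $\widehat\lambda$ on $K_3(F)$ via~\eqref{MaponK} and the material of Section~\ref{H3SL3}, pass to $K_3^{\ind}(F)$ using Proposition~\ref{compositioneq0}, and then apply the five-lemma to the diagram of Lemma~\ref{relationtoSuslin}. Your remarks on Galois equivariance also match the paper's treatment at the end of Step one.
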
  

If $F\subset E$ is a field extension, the natural map $K_3^{\ind}(F)\to K_3^{\ind}(E)$ is an inclusion. Furthermore, if $F\subset E$ is Galois, we have $K_3^{\ind}(E)^{\Gal(E,F)}=K_3^{\ind}(F)$. This property is called Galois descent. We refer to Merkurjev--Suslin~\cite{MerkurjevSuslin} for proofs.

\begin{cor}\label{Bhatinjective}
For any free subfield $F$ of $\C$, the map $\widehat\B(F)\to\widehat\B(\C)$ induced by inclusion is injective. \qed
\end{cor}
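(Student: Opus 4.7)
The plan is to transfer the question from $\widehat\B$ to $K_3^{\ind}$ using the main isomorphism of Theorem~\ref{generalmainthm}, where injectivity is already known. Let $\iota\colon F\hookrightarrow \C$ denote the inclusion. I want to build a commutative square
\[
\xymatrix{K_3^{\ind}(F)\ar[r]^-{\iota_*}\ar[d]^-{\widehat\lambda}_-{\cong}&K_3^{\ind}(\C)\ar[d]^-{\cong}\\
\widehat\B(F)\ar[r]^-{\iota_*}&\widehat\B(\C),}
\]
where the left vertical is the isomorphism of Theorem~\ref{generalmainthm} and the right vertical is the composition of Sah's isomorphism $K_3^{\ind}(\C)\cong H_3(\SL(2,\C))$ from Theorem~\ref{Sahthm} with the Goette--Zickert isomorphism $H_3(\SL(2,\C))\cong \widehat\B(\C)$ of Theorem~\ref{H3eqBhat}.

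Commutativity is essentially built in. The construction of $\widehat\lambda$ in Section~\ref{H3SL3} only requires that the base field be infinite and the chosen extension be primitive; the exponential extension of $\C^*$ is primitive (since $\C$ is torsion free), so the construction applies verbatim to $\C$ and by Lemma~\ref{SL2SL3} agrees on $H_3(\SL(2,\C))$ with the Goette--Zickert map. The final paragraph of Section~\ref{H3SL3} states that $\widehat\lambda$ respects maps induced by embeddings in $\C$, which supplies the desired naturality of $\widehat\lambda$ along $\iota$ after composing with the Hurewicz and stabilization maps.

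Given the square, the result is immediate: the top horizontal $\iota_*\colon K_3^{\ind}(F)\to K_3^{\ind}(\C)$ is injective by the field-extension fact cited in the paragraph preceding the corollary (which is proved by Galois descent, reducing first to a finite Galois extension $E\supset F$ inside $\C$ and then passing to the colimit $\C=\varinjlim E$). Since the two vertical arrows are isomorphisms, the bottom arrow $\iota_*\colon \widehat\B(F)\to \widehat\B(\C)$ is injective as well.

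The only non-routine point is checking the commutativity of the square, specifically that the Goette--Zickert isomorphism on the right agrees with $\widehat\lambda$ applied to $\C$. This is already established internally to the paper: $\widehat\lambda$ was engineered in Section~\ref{FlatSection} so as to factor the Hurewicz-plus-Cheeger--Chern--Simons construction through ideal cochains on $3$-cycles, which is precisely the definition of the Goette--Zickert isomorphism. So the argument is really just an assembly of results already in hand.
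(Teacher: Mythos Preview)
Your proof is correct and follows the same line the paper intends: the corollary is marked with \qed immediately after the statement that $K_3^{\ind}(F)\to K_3^{\ind}(E)$ is injective for any field extension, so the paper's proof is precisely the commutative square you wrote down, transferring injectivity from $K_3^{\ind}$ to $\widehat\B$ via Theorem~\ref{generalmainthm} on the left and Theorems~\ref{Sahthm} and~\ref{H3eqBhat} on the right. Your observation that $\C$ is not free, so one must use the Sah and Goette--Zickert isomorphisms rather than Theorem~\ref{generalmainthm} on the $\C$ side, is a detail the paper leaves implicit but which you have correctly supplied.
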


\begin{cor}\label{Galoisdescent}
The extended Bloch group of a number field satisfies Galois descent.\qed
\end{cor}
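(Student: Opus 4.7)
The plan is to transport Galois descent from $K_3^{\ind}$ across the isomorphism of Theorem~\ref{generalmainthm}. Fix a finite Galois extension $F\subset E$ of number fields with Galois group $G=\Gal(E,F)$. By Theorem~\ref{generalmainthm}, we have natural isomorphisms $\widehat\lambda_F\colon K_3^{\ind}(F)\cong\widehat\B(F)$ and $\widehat\lambda_E\colon K_3^{\ind}(E)\cong\widehat\B(E)$, both commuting with Galois actions.

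First I would set up the compatibility square. Naturality of $\widehat\lambda$ with respect to the inclusion $F\hookrightarrow E$ yields the commutative diagram
\[\xymatrix{K_3^{\ind}(F)\ar[r]^-{\widehat\lambda_F}\ar[d] & \widehat\B(F)\ar[d]\\ K_3^{\ind}(E)\ar[r]^-{\widehat\lambda_E} & \widehat\B(E),}\]
where the left vertical map is the inclusion quoted from Merkurjev--Suslin, and the right vertical map is the one induced by the embedding $F\hookrightarrow E$ (injective by Corollary~\ref{Bhatinjective}). Moreover, since both horizontal isomorphisms are $G$--equivariant, the right vertical map is also $G$--equivariant when we equip $\widehat\B(E)$ with the $\Aut(E)$--action of Proposition~\ref{Galoisaction}.

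Next I would take $G$--invariants. By Galois descent for $K_3^{\ind}$ (Merkurjev--Suslin), the image of $K_3^{\ind}(F)\hookrightarrow K_3^{\ind}(E)$ is exactly $K_3^{\ind}(E)^G$. Applying $\widehat\lambda_E$ and using that it is a $G$--equivariant isomorphism, we get
\[\widehat\lambda_E\bigl(K_3^{\ind}(E)^G\bigr)=\widehat\lambda_E\bigl(K_3^{\ind}(E)\bigr)^G=\widehat\B(E)^G.\]
Combining with the commutative square, the image of $\widehat\B(F)\to\widehat\B(E)$ coincides with $\widehat\B(E)^G$, which together with injectivity of this map (Corollary~\ref{Bhatinjective}) identifies $\widehat\B(F)$ with $\widehat\B(E)^G$. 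This is exactly the assertion of Galois descent.

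There is essentially no obstacle here; the statement is a formal consequence of Theorem~\ref{generalmainthm} once one has the three ingredients in place: Galois descent for $K_3^{\ind}$, Galois-equivariance of $\widehat\lambda$, and injectivity of the map $\widehat\B(F)\to\widehat\B(E)$ provided by Corollary~\ref{Bhatinjective}. The only point one has to be slightly careful about is that the $G$--action on $\widehat\B(E)$ appearing in the definition of Galois descent is the action of Proposition~\ref{Galoisaction}, and this is precisely the action that corresponds under $\widehat\lambda_E$ to the canonical $G$--action on $K_3^{\ind}(E)$, as guaranteed by the last assertion of Theorem~\ref{generalmainthm}.
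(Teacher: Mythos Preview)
Your proposal is correct and is exactly the argument the paper has in mind: the corollary is marked with an immediate \qed because Galois descent for $\widehat\B$ is transported directly from Merkurjev--Suslin's Galois descent for $K_3^{\ind}$ via the natural, Galois-equivariant isomorphism of Theorem~\ref{generalmainthm}. One minor remark: you do not actually need to invoke Corollary~\ref{Bhatinjective} for the injectivity of $\widehat\B(F)\to\widehat\B(E)$, since this already follows from the commutative square (the horizontals are isomorphisms and the left vertical is injective); moreover, Corollary~\ref{Bhatinjective} as stated concerns maps into $\widehat\B(\C)$ rather than into $\widehat\B(E)$.
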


\section{Torsion in the extended Bloch group}\label{TorinBhat}
In this section we give a concrete description of the torsion in $\widehat\B(F)$. We start by reviewing some elementary properties of homology of cyclic groups.

\begin{prop}\label{Hofcyclic} Let $G$ be a cyclic group of order $n$ generated by an element $g\in G$.
The homology group $H_3(G)$ is cyclic of order $n$ and is generated by the cycle
\[\sum_{k=1}^n\langle g\vert g^k\vert g\rangle.\]
We may thus identify $G$ with $H_3(G)$.\qed
\end{prop}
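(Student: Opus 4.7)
The plan is to argue in three steps. First I would establish that $H_3(G)\cong\Z/n\Z$ using the classical $2$-periodic free $\Z[G]$-resolution
\[P_*\colon\quad\cdots\to\Z[G]\xrightarrow{g-1}\Z[G]\xrightarrow{N}\Z[G]\xrightarrow{g-1}\Z[G]\to\Z\to 0,\qquad N=1+g+\cdots+g^{n-1}.\]
After applying $\Z\otimes_{\Z[G]}-$, the differentials alternate between $0$ (from $g-1$) and multiplication by $n$ (from $N$), so $H_3(G)=\Z/n\Z$, with the generator represented by $e_3\otimes 1\in\Z\otimes_{\Z[G]}P_3$.

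Next I would verify that $\alpha=\sum_{k=1}^n\langle g\mid g^k\mid g\rangle$ is a cycle. The inhomogeneous bar differential gives
\[\partial\langle g\mid g^k\mid g\rangle=\langle g^k\mid g\rangle-\langle g^{k+1}\mid g\rangle+\langle g\mid g^{k+1}\rangle-\langle g\mid g^k\rangle,\]
and the four resulting sums collect into two telescoping pairs that vanish because $g^{n+1}=g$.

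The substantive step is to identify $\alpha$ with the image of $e_3\otimes 1$ under an explicit chain equivalence from $P_*$ to the homogeneous bar resolution $B_*^{h}(G)$. I would define $\phi$ by
\[\phi(e_0)=[1],\quad\phi(e_1)=[1\mid g],\quad\phi(e_2)=\sum_{i=0}^{n-1}[1\mid g^i\mid g^{i+1}],\quad\phi(e_3)=\sum_{i=0}^{n-1}[1\mid g\mid g^{i+1}\mid g^{i+2}],\]
and check by a short telescoping argument (entirely parallel to step two, and again using $g^n=1$) that $\partial\phi(e_{j+1})=\phi(\partial e_{j+1})$ for $j=0,1,2$. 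Passing to coinvariants via the standard correspondence $[g_0\mid\cdots\mid g_m]\otimes_G 1\leftrightarrow\langle g_0^{-1}g_1\mid\cdots\mid g_{m-1}^{-1}g_m\rangle$ transforms $\phi(e_3)\otimes 1$ into $\sum_{i=0}^{n-1}\langle g\mid g^i\mid g\rangle$, which equals $\alpha$ after re-indexing $i\mapsto k\pmod n$ (the term $i=0$ matches $k=n$ since $g^n=1$). Because $e_3\otimes 1$ generates $H_3$ of $P_*\otimes_G\Z$, this shows $\alpha$ generates $H_3(G)$.

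The main obstacle is the index bookkeeping in constructing and checking $\phi$; the formulas are compact but require keeping careful track of the cyclic re-indexing at each stage. Everything else reduces to either the classical periodic-resolution calculation or elementary telescoping.
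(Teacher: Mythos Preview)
Your argument is correct. The periodic resolution computation is standard, the cycle check is right, and your chain map $\phi\colon P_*\to B_*^h(G)$ is valid: the telescoping in each degree works exactly as you describe, and the passage to coinvariants sends $\phi(e_3)\otimes 1$ to $\sum_{i=0}^{n-1}\langle g\mid g^i\mid g\rangle$, which is the stated cycle after cyclic re-indexing.

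As for comparison: the paper does not actually prove this proposition. It simply cites Parry--Sah~\cite[Proposition~3.25]{ParrySah} for an algebraic proof and Neumann~\cite{Neumann} for a geometric proof via the lens space $L(n,1)$. Your approach is the standard algebraic one --- the explicit comparison map from the periodic resolution to the bar complex --- and is presumably close in spirit to what Parry--Sah do. The geometric alternative realizes $G=H_3(G)$ as the fundamental class of $L(n,1)$ under the classifying map, then reads off the bar cycle from a simplicial decomposition of the lens space; this has the advantage of making the identification of $G$ with $H_3(G)$ manifestly compatible with the geometric constructions later in Section~\ref{TorinBhat}, but your purely algebraic route is shorter and self-contained.

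One minor remark on bookkeeping: the term $k=n$ in the cycle is $\langle g\mid 1\mid g\rangle$, which is degenerate. Your computation is carried out in the unnormalized bar complex (as is the paper's convention), so this causes no trouble; but it is worth noting that in the normalized complex the sum effectively runs over $k=1,\dots,n-1$, and the re-indexing $i\mapsto k$ matches the $i=0$ term to this degenerate generator.
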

We refer to Parry--Sah~\cite[Proposition 3.25]{ParrySah} for an algebraic proof, and to Neumann~\cite{Neumann} for a geometric proof using the lens space $L(n,1)$.

Let $p$ be a prime number. As explained in the introduction, $K_3^{\text{ind}}(F)_p$ has order $p^{\nu_p}$ for $p$ odd and $2p^{\nu_p}$ for $p=2$, where $\nu_p=\text{max}\{\nu\mid \xi_{p^\nu}+\xi_{p^\nu}^{-1}\in F\}$. To construct the torsion in $\widehat\B(F)$, it is thus enough to exhibit elements in $\widehat\B(F)_p$ of order $p^{\nu_p}$ for $p$ odd and $2p^{\nu_p}$ for $p=2$. 

Let $n=p^{\nu_p}$, and let $x$ be a primitive $n$th root of unity. Consider the matrices
\begin{equation}\label{matrices}g=\begin{pmatrix}x+x^{-1}&-1\\1&0\end{pmatrix},\quad \mu=\begin{pmatrix}x&0\\0&x^{-1}\end{pmatrix},\quad X=\begin{pmatrix}x&1\\1&x\end{pmatrix}.\end{equation}
Note that $g\in \SL(2,F)$ and that $g=X\mu X^{-1}$. Hence, $g$ generates a cyclic subgroup of $\SL(2,F)$ of order $n$. 
Let $[g]\in H_3(\SL(2,F))$ denote the homology class of the cycle $\sum_{k=1}^n\langle g\vert g^k\vert g\rangle$.

\begin{lemma}\label{torsionidea} The element $\widehat\lambda([g])\in \widehat\B(F)$ has order $n=p^{\nu_p}$. 
\end{lemma}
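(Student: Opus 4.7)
My plan is to establish matching upper and lower bounds on the order of $\widehat\lambda([g])$, both equal to $n$.

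For the upper bound, since $g^n=1$, the cyclic subgroup $\langle g\rangle\subset\SL(2,F)$ is isomorphic to $\Z/n\Z$, and by Proposition~\ref{Hofcyclic} the cycle $\sum_{k=1}^n\langle g\vert g^k\vert g\rangle$ represents a generator of $H_3(\langle g\rangle)\cong\Z/n\Z$. The class $[g]\in H_3(\SL(2,F))$ is the image of this generator under the inclusion $\langle g\rangle\hookrightarrow\SL(2,F)$, so it has order dividing $n$, and therefore so does $\widehat\lambda([g])$.

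For the lower bound, I would fix an embedding $\sigma\colon F\to\C$ and use the naturality of $\widehat\lambda$ with respect to field embeddings (established at the end of Section~\ref{AlgKeqBhat}) to write $\sigma_*(\widehat\lambda([g]))=\widehat\lambda([\sigma(g)])\in\widehat\B(\C)$. By Theorem~\ref{generalmainthm} combined with Galois descent for $K_3^{\ind}$ (Corollary~\ref{Bhatinjective}), the map $\sigma_*\colon\widehat\B(F)\to\widehat\B(\C)$ is injective, so it suffices to prove that $\widehat\lambda([\sigma(g)])$ has order $n$ in $\widehat\B(\C)$. Over $\C$ the matrix $\sigma(g)$ is conjugate to the diagonal matrix $\sigma(\mu)=\left(\begin{smallmatrix}\sigma(x)&0\\0&\sigma(x)^{-1}\end{smallmatrix}\right)$ by $\sigma(X)\in\GL(2,\C)$. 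Because every element of $\C^*$ is a square, $\sigma(X)$ may be rescaled by a scalar $\lambda\in\C^*$ to lie in $\SL(2,\C)$; since central scalars act trivially under conjugation, conjugation by $\sigma(X)$ coincides with conjugation by an element of $\SL(2,\C)$. Inner automorphisms act trivially on group homology, so $[\sigma(g)]=[\sigma(\mu)]\in H_3(\SL(2,\C))$.

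Finally, $\sigma(\mu)$ lies in the image of the diagonal embedding $\mu_\C\hookrightarrow\SL(2,\C)$ of Theorem~\ref{torsiondiag} and generates a cyclic subgroup of order $n$. By Proposition~\ref{Hofcyclic} it defines a generator of $H_3(\langle\sigma(\mu)\rangle)\cong\Z/n\Z$, and Theorem~\ref{torsiondiag} then gives that $[\sigma(\mu)]$ has order exactly $n$ in $H_3(\SL(2,\C))$. Identifying $H_3(\SL(2,\C))$ with $\widehat\B(\C)$ via Theorem~\ref{H3eqBhat}, whose isomorphism agrees with $\widehat\lambda$ restricted to $H_3(\SL(2,\C))$ by Lemma~\ref{SL2SL3} and \cite{DupontZickert}, one concludes that $\widehat\lambda([\sigma(\mu)])$ has order $n$ in $\widehat\B(\C)$, completing the lower bound. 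The main technical subtlety lies in the conjugation step: passing from $\sigma(g)$ to the diagonal form $\sigma(\mu)$ requires the conjugating matrix to be replaceable by one in $\SL(2,\C)$, which depends essentially on $\C^*=(\C^*)^2$; for a general free field $F$, this is precisely why the argument is carried out after embedding into $\C$ rather than directly inside~$F$ (where $x\notin F$ in general even though $x+x^{-1}\in F$).
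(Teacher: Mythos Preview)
Your argument is correct and follows essentially the same route as the paper: the upper bound comes from Proposition~\ref{Hofcyclic}, and the lower bound comes from embedding into $\C$, using that $g$ and $\mu$ become conjugate in $\SL(2,\C)$, and then appealing to Theorem~\ref{torsiondiag}. Two minor remarks: the paper embeds $F(x)$ (rather than just $F$) in $\C$, which is what you need anyway since $\mu$ and $X$ have entries in $F(x)$---your notation $\sigma(\mu),\sigma(X)$ already implicitly does this, but it is worth being explicit; and your appeal to Corollary~\ref{Bhatinjective} is unnecessary, since if $\sigma_*(\widehat\lambda([g]))$ has order $n$ then $\widehat\lambda([g])$ automatically has order \emph{at least} $n$, which together with the upper bound already gives order exactly $n$.
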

\begin{proof} 
It follows from Proposition~\ref{Hofcyclic} that $\widehat\lambda([g])$ has order at most $n$. If we fix an embedding of $F(x)$ in $\C$, we can view $g$ and $\mu$ as elements in $\SL(2,\C)$. Since $g$ and $\mu$ are conjugate in $\SL(2,\C)$, it follows from Theorem~\ref{torsiondiag} that $\widehat\lambda([g])$ has order at least $n$. Hence, $\widehat\lambda([g])$ has order $n$.
\end{proof}

\begin{cor} For $p$ odd, $\widehat\B(F)_p$ is generated by $\widehat\lambda([g])$.\qed
\end{cor}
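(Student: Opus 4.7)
The plan is to combine Lemma~\ref{torsionidea} with the structure of the $p$--torsion of $K_3^{\ind}(F)$ described in the introduction. By the Merkurjev--Suslin result quoted there, the torsion of $K_3^{\ind}(F)$ is cyclic of order $2\prod_p p^{\nu_p}$; in particular, for an odd prime $p$, the $p$--primary part $K_3^{\ind}(F)_p$ is cyclic of order $p^{\nu_p}$.

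Next I would transport this statement across the isomorphism $\widehat\lambda\colon K_3^{\ind}(F)\xrightarrow{\cong}\widehat\B(F)$ of Theorem~\ref{generalmainthm} (and Theorem~\ref{mainthm} in the number field case). This identifies $\widehat\B(F)_p$ with $K_3^{\ind}(F)_p$, so $\widehat\B(F)_p$ is itself cyclic of order $p^{\nu_p}$.

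Finally, Lemma~\ref{torsionidea} produces an explicit element $\widehat\lambda([g])\in\widehat\B(F)_p$ of order exactly $n=p^{\nu_p}$. An element whose order equals the order of a finite cyclic group automatically generates that group, which gives the corollary. The only thing to double check is that $\widehat\lambda([g])$ genuinely lies in the $p$--primary part, but this is immediate since it has order a power of $p$.

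The argument is essentially a combinatorial consequence of results already established, so there is no real obstacle; the substantive content is entirely in Lemma~\ref{torsionidea} (producing an element of the correct order) and in the main theorem (pinning down the size of $\widehat\B(F)_p$). One small bookkeeping point worth emphasizing in the write-up is that, for odd $p$, the factor of $2$ in the order formula $2\prod p^{\nu_p}$ contributes only to the $2$--primary part, so $\nu_p$ itself really is the exponent of $p$ in $|\widehat\B(F)_p|$, matching the order of $\widehat\lambda([g])$.
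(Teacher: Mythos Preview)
Your proposal is correct and is exactly the intended argument: the paper states the corollary with a bare \qed and, immediately before Lemma~\ref{torsionidea}, spells out precisely your strategy---use the Merkurjev--Suslin computation of the torsion in $K_3^{\ind}(F)$, transport it to $\widehat\B(F)$ via the main theorem, and then note that an element of order $p^{\nu_p}$ in a cyclic group of that order is a generator. There is nothing to add.
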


\subsection{Explicit computations}
We now give an explicit expression for $\widehat\lambda([g])$. Assume for now that $p$ is odd.  

For any $h_1$ and $h_2$ in $\SL(2,F)$, there is a homogeneous representative of $[g]$ of the form
\begin{equation}\label{hrepresentative}\sum_{i=1}^n(h_1,gh_1,g^kh_2,g^{k+1}h_2),\end{equation}
see e.g.~the example in Neumann~\cite[Section~12]{Neumann}. Using \eqref{giflat}, we see that $\widehat\lambda$ takes a term $(h_1,gh_1,g^kh_2,g^{k+1}h_2)$ to a flattening $(e_k,f_k)$, with
\begin{equation}\label{ekfk}
\begin{gathered}
e_k=\log(\det(v_1,g^{k+1}v_2))+\log(\det(v_1,g^{k-1}v_2))-2\log(\det(v_1,g^kv_2))\\
f_k=\log(\det(v_1,gv_1))+\log(\det(v_2,gv_2))-2\log(\det(v_1,g^kv_2)),
\end{gathered}
\end{equation}
where $v_1=h_1\bigl(\begin{smallmatrix}1\\0\end{smallmatrix}\bigr)$ and $v_2=h_2\bigl(\begin{smallmatrix}1\\0\end{smallmatrix}\bigr)$. It follows that $\widehat\lambda([g])=\sum_{i=1}^n(e_k,f_k)\in\widehat\B(F)$.

Since the cycles \eqref{hrepresentative} all represent $[g]$, we may choose $v_1$ and $v_2$ as we please (as long as  
the vectors $v_1,gv_1,g^kv_2,g^{k+1}v_2$ in $F^2$ are in general position) without effecting the element in $\widehat\B(F)$. 
If we let $v_1=\bigl(\begin{smallmatrix}1\\-1\end{smallmatrix}\bigr)$ and $v_2=\bigl(\begin{smallmatrix}1\\1\end{smallmatrix}\bigr)$, we have
\begin{equation}\label{poddcomp}
\begin{aligned}
\det(v_1,g^kv_2)&=\det(v_1,X\mu^k X^{-1}v_2)\\
&=(x^2-1)\det(X^{-1}v_1,\mu^kX^{-1}v_2)\\
&=\frac{1}{x^2-1}\det\bigl(\bigl(\begin{smallmatrix}x+1\\-x-1\end{smallmatrix}\bigr),\mu^k\bigl(\begin{smallmatrix}x-1\\x-1\end{smallmatrix}\bigr)\bigr)\\
&=\det\bigl(\bigl(\begin{smallmatrix}1\\-1\end{smallmatrix}\bigr),\bigl(\begin{smallmatrix}x^k&0\\0&x^{-k}\end{smallmatrix}\bigr)\bigl(\begin{smallmatrix}1\\1\end{smallmatrix}\bigr)\bigr)\\
&=x^k+x^{-k}.
\end{aligned}
\end{equation}
Letting $z_k$ denote the corresponding cross-ratio of $(e_k,f_k)$, it follows from \eqref{ekfk} that 
\[z_k=\frac{(x^{k+1}+x^{-k-1})(x^{k-1}+x^{-k+1})}{(x^k+x^{-k})^2}.\]
Since $p$ is assumed to be odd, $z_k\in F\setminus\{0,1\}$.
This proves Theorem~\ref{torsioninB} for $p$ odd.

Suppose $p=2$. By computations similar to \eqref{poddcomp} using $v_1=\bigl(\begin{smallmatrix}1\\0\end{smallmatrix}\bigr)$ and $v_2=\bigl(\begin{smallmatrix}1\\-1\end{smallmatrix}\bigr)$ we obtain
\begin{equation}\label{p2}\det(v_1,g^kv_2)=\frac{x^k-x^{-k+1}}{x-1}, \quad \det(v_1,gv_1)=1,\quad \det(v_2,gv_2)=2+x+x^{-1}.\end{equation}
We wish to prove that $\widehat\lambda([g])\in \widehat\B(F)$ is $2$--divisible.

Let $c_k=\det(v_1,g^kv_2)$ and let $\tilde c_k=\log(c_k)$. Also, let $a=2+x+x^{-1}$ and let $\tilde a=\log(a)$. 
By \eqref{p2}, we see that $c_k=c_{n-k+1}$ and $c_k=-c_{k+n/2}$. 
By \eqref{ekfk}, 
\[(e_k,f_k)=(\tilde c_{k+1}+\tilde c_{k-1}-2\tilde c_k,\tilde a-2\tilde c_k).\]
We may choose different logarithms for each $k$ without effecting the element $\widehat\lambda([g])=\sum_{i=1}^n(e_k,f_k)$. We will choose them such that $\tilde c_k-\tilde c_{n-k+1}$ and $\tilde c_{k+n/2}-\tilde c_k$ are independent of $k$ and such that $2\tilde c_k=2\tilde c_{k+n/2}$. With these particular choices, it is easy to see that $\widehat\lambda([g])$ is $2$--divisible. Indeed, $\widehat\lambda([g])=2Q$, where 
\begin{equation}
Q=\sum_{i=1}^{n/2}(e_k,f_k)\in\widehat\Pre(F). 
\end{equation}
We now only need to prove that $Q$ is in $\widehat\B(F)$. This follows from the computation
\begin{gather*}
\begin{aligned}
\widehat\nu(Q)&=\sum\nolimits_{k=1}^{n/2}(\tilde c_{k+1}+\tilde c_{k-1}-2\tilde c_k)\wedge(\tilde a-2\tilde c_k)\\
&=\sum\nolimits_{k=1}^{n/2}2\tilde c_k\wedge(\tilde c_{k+1}+\tilde c_{k-1})-\sum\nolimits_{k=1}^{n/2}2\tilde c_k\wedge \tilde a + \sum\nolimits_{k=1}^{n/2}(\tilde c_{k+1}+\tilde c_{k-1})\wedge \tilde a\\
&=\sum\nolimits_{k=1}^{n/2}(2\tilde c_k\wedge \tilde c_{k+1}-2\tilde c_{k-1}\wedge \tilde c_k)+
\sum\nolimits_{k=1}^{n/2}\big((\tilde c_{k+1}-\tilde c_k)-(\tilde c_k-\tilde c_{k-1})\big)\wedge \tilde a\\
&=2\tilde c_{n/2}\wedge\tilde c_{n/2+1}-2\tilde c_0\wedge\tilde c_1+(\tilde c_{n/2+1}-\tilde c_1)\wedge \tilde a-(\tilde c_{n/2}-\tilde c_0)\wedge\tilde a\\
&=0\in \wedge^2(E).
\end{aligned}
\end{gather*}

Since \[z_k=\frac{c_{k+1}c_{k-1}}{c_k^2}=\frac{(x^{k+1}-x^{-k})(x^{k-1}-x^{-k+2})}{(x^k-x^{-k+1})^2}\in F,\]
this proves Theorem~\ref{torsioninB} for $p=2$. 

We give some examples below. The computational details are left to the reader.

\begin{ex} For any number field $F$, which does not contain a $3$rd root of unity, the element $2[-2]+[\frac{1}{4}]\in\B(F)$ has order~$3$.
\end{ex}
\begin{ex} Let $F=\Q(\sqrt 2)$. 
Doing the above computations, we obtain that 
\[Q=[\sqrt 2-1;0,0]+[\sqrt 2-1;0,-2]+[-\sqrt 2-1;0,0]+[-\sqrt 2-1;-2,-2]\in \widehat\B(F).\]
It follows that the element $\beta_2=2[\sqrt 2-1]+2[-\sqrt 2-1]\in \B(F)$ has order $4$ and generates $\B(F)_2$. 
Note that $\beta_2$ is \emph{not} $2$--divisible. Applying the regulator~\eqref{regulator}, we get $R(Q)=\pi^2/4\in \C/4\pi^2$, which has order $16=2^{\nu_2+1}$ as expected.
\end{ex}

\begin{remark} The order of the torsion in $K_3^{\text{ind}}(F)$ is always divible by $24$. A particular generator of the $24$--torsion in $\widehat\B(F)$ is given by the element $(e,f)+(f,e)$ from Lemma~\ref{N7.3}. We omit the proof of this.
\end{remark}

\section{Hyperbolic $3$--manifolds}\label{hypthm}
Let $M$ be a complete, oriented, hyperbolic $3$--manifold with finite volume, and let $K$ and $k$ denote the trace field and invariant trace field of $M$. By a result of Goncharov~\cite[Theorem~1.1]{Goncharov}, $M$ defines an element $[M]$ in $K_3^{\ind}(\overline\Q)\otimes\Q$, which equals the Bloch invariant of $M$ (see e.g.~Neumann--Yang~\cite{NeumannYang}) under the isomorphism 
\[K_3^{\ind}(\overline\Q)\otimes\Q\cong \B(\overline\Q).\] 

Recall that a spin structure on $M$ is equivalent to a lift of the geometric representation to $\SL(2,\C)$, and that the set of spin structures is an affine space over $H^1(M;\Z/2\Z)$. We thank Walter Neumann for assistance with the proof of the result below.
\begin{thm}
Suppose $M$ is closed. A spin structure $\rho$ on $M$ determines a fundamental class $[M_\rho]$ in $K_3^{\ind}(K)$ lifting the Bloch invariant. For each $\alpha\in H^1(M;\Z/2\Z)$, the element $[M_\rho]-[M_{\alpha\rho}]$ is two-torsion, which is trivial if and only if the induced map $B\alpha_*\colon H_3(M)\to H_3(B(\Z/2\Z))=\Z/2\Z$ is trivial. In particular, $2[M_\rho]$ is independent of $\rho$. Moreover, $2[M_\rho]$ is in $K_3^{\ind}(k)$.
\end{thm}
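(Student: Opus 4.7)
My plan is to construct $[M_\rho]$ via the isomorphism $\widehat\lambda\colon K_3^{\ind}(K)\cong \widehat\B(K)$ of Theorem~\ref{generalmainthm}, then apply Theorem~\ref{Z2action} to control the dependence on the spin structure, and conclude with Galois descent (Corollary~\ref{Galoisdescent}).

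A spin structure on $M$ is the same as a lift $\rho\colon\pi_1(M)\to \SL(2,K)$ of the geometric representation, so the classifying map $M\to B\SL(2,K)$ sends the orientation class to $\rho_*[M]\in H_3(\SL(2,K))$. I set
\[[M_\rho]:=\widehat\lambda(\rho_*[M])\in \widehat\B(K)\cong K_3^{\ind}(K).\]
By Lemma~\ref{agreelemma} and Remark~\ref{GL2GL3}, the projection $\widehat\B(K)\to\B(K)$ carries $[M_\rho]$ to the image of $\rho_*[M]$ under Suslin's map, which agrees with the Bloch invariant of Neumann--Yang; hence $[M_\rho]$ lifts the Bloch invariant.

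Next I would compare $[M_\rho]$ and $[M_{\alpha\rho}]$ for $\alpha\in H^1(M;\Z/2\Z)$. Fix a smooth triangulation $T$ of $M$ with ordered vertices and a basepoint. Lifting to the universal cover and applying $\rho$, each oriented $3$--simplex $\Delta_i$ of $T$ acquires a $4$--tuple $(g_0^i,\dots,g_3^i)\in\SL(2,K)^4$, and $\sum_i\varepsilon_i(g_0^i,\dots,g_3^i)$ represents $\rho_*[M]$. By the construction of $\widehat\lambda$ in Section~\ref{idealcochainSection} (see also Remark~\ref{idealcochainsexist}), the class $[M_\rho]$ is represented in $\widehat\B_E(K)$ by the ideal cochain $c_\rho$ on $T$ whose restriction to $\Delta_i$ has edge labels $\log\det(g_j^iv,g_k^iv)$ for a fixed $v\neq 0\in F^2$. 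Replacing $\rho$ by $\alpha\rho$ multiplies each $g_j^iv$ by a sign, and a direct check shows that the resulting ideal cochain is $\alpha\cdot c_\rho$ where $\alpha$ is viewed as the induced $1$--cocycle in $Z^1(T;\Z/2\Z)$. Theorem~\ref{Z2action} then gives that $[M_{\alpha\rho}]-[M_\rho]$ is $2$--torsion, trivial precisely when $B\alpha_*([M])=0\in H_3(B\Z/2\Z)=\Z/2\Z$. In particular $2[M_\rho]$ is independent of the spin structure.

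For the last claim I use Galois descent. Assume $[K:k]=2$ (else there is nothing to show) and let $\sigma$ generate $\Gal(K/k)$. Since the traces of squares generate $k$, the $\PSL_2$--character of the geometric representation is defined over $k$, so the Galois conjugate $\sigma\bar\rho$ is $\PSL(2,K)$--conjugate to $\bar\rho$ by Mostow rigidity; lifting the conjugator to $\SL(2,K)$ shows that $\sigma\rho$ is $\SL(2,K)$--conjugate to $\alpha\rho$ for some $\alpha\in H^1(M;\Z/2\Z)$. Because inner automorphisms act trivially on group homology, $\sigma_*[M_\rho]=[M_{\sigma\rho}]=[M_{\alpha\rho}]$, so the previous paragraph yields $2\sigma_*[M_\rho]=2[M_\rho]$. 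Thus $2[M_\rho]\in K_3^{\ind}(K)^{\Gal(K/k)}=K_3^{\ind}(k)$ by Corollary~\ref{Galoisdescent}.

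The main obstacle I expect is the second paragraph: one must verify that $\widehat\lambda(\rho_*[M])$ is literally represented by an ideal cochain on a triangulation in the sense of Section~\ref{FlatSection}, so that Theorem~\ref{Z2action} applies verbatim. The map $\widehat\lambda$ is defined in Section~\ref{H3SL3} using the resolution $C_*(F^2)$ rather than the bar resolution, and the translation between $\rho_*[M]$ (bar-resolution data with an $\SL(2,K)$--action) and the ideal-cochain data (a triangulation of $M$ with developed vertices in $F^2$) is the geometric content that makes the twisting by $\alpha$ visibly correspond to the $Z^1(T;\Z/2\Z)$--action on ideal cochains.
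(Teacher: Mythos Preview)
Your overall strategy matches the paper's, but there is a genuine gap in the first step. You assert that a spin structure is a lift $\rho\colon\pi_1(M)\to\SL(2,K)$, and this is not true in general: a spin structure is a lift to $\SL(2,\C)$, and there is no reason that the image can be conjugated into $\SL(2,K)$. What is known (Reid--Maclachlan~\cite[Corollary~3.2.4]{ReidMaclachlan}) is that $\rho$ can be conjugated into $\SL(2,K(\lambda))$ for some $\lambda$ of degree at most $2$ over $K$. The paper therefore first gets $[M_\rho]\in\widehat\B(K(\lambda))$ and then runs a separate Galois descent argument to land in $\widehat\B(K)$: the nontrivial element of $\Gal(K(\lambda)/K)$ fixes traces (because $K$ is the trace field), so it sends $\rho$ to a representation conjugate over $\C$, hence fixes the image in $\widehat\B(\C)$; Corollaries~\ref{Bhatinjective} and~\ref{Galoisdescent} then give $[M_\rho]\in\widehat\B(K)$. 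You skipped this step entirely.

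There is also a minor slip in the last paragraph: you assume $[K:k]\le 2$, but $K/k$ is only known to be Galois (Neumann--Reid) with elementary abelian $2$--group as Galois group, and the degree can be any power of $2$. Your argument does not actually need the degree bound: it works verbatim for every $\sigma\in\Gal(K/k)$, showing $2[M_\rho]$ is fixed by the whole Galois group and hence lies in $K_3^{\ind}(k)$ by descent. Once you insert the missing $K(\lambda)\to K$ descent and drop the degree assumption, your argument coincides with the paper's.
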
 
\begin{proof}
By Reid--Maclachlan \cite[Corollary 3.2.4]{ReidMaclachlan}, we may assume that $\rho$ has image in $\SL(2,K(\lambda))$, where $\lambda$ is an algebraic element of degree at most $2$ over $K$. Endow $M$ with the structure of a closed $3$--cycle, 
and fix an $\SL(2,K(\lambda))$--cocycle $\alpha$ on $M$ representing the fundamental class $[\rho]$ of $\rho$ in $H_3(\SL(2,K(\lambda)))$; see e.g.~Zickert~\cite[Section~5]{Zickert}. Let $[M_\rho]=\widehat\lambda([\rho])$. Then $[M_\rho]$, is given by the ideal cochain $c$ on $M$ defined by $\alpha$ using \eqref{giflat} and \eqref{defofGamma}. 
If $\lambda$ has degree $1$, $[M_\rho]$ is obviously in $\widehat\B(K)$. If $\lambda$ has degree $2$, the non-trivial element in $\Gal(K(\lambda),K)$ preserves traces of $\rho$ ($K$ is the trace field), and therefore takes $\rho$ to a representation which is conjugate over $\C$. It follows that the image of $[M_\rho]$ in $\widehat\B(\C)$ is invariant under $\Gal(K(\lambda),K)$, so by the Corollaries \ref{Bhatinjective} and \ref{Galoisdescent}, $[M_\rho]$ is in $\widehat\B(K)$.

The second statement follows from Theorem~\ref{Z2action}, so we now only need to prove that $2[M_\rho]$ is in $\widehat\B(k)$. By Neumann--Reid~\cite[Theorem~2.1]{NeumannReid}, $K$ is Galois over $k$. Let $\sigma\in\Gal(K,k)$. Since $k$ is the field of squares of traces of $\rho$, it follows that $\sigma\rho$ as a representation in $\PSL(2,\C)$ is conjugate to the geometric representation. After a conjugation (which does not change the fundamental class), we may thus assume that $\rho$ and $\sigma\rho$ are equal as representations in $\PSL(2,\C)$. Hence, $c$ and $\sigma(c)$ differ by a $\Z/2\Z$--cocycle, so by Theorem~\ref{Z2action}, $2\sigma([M_\rho])=2[M_\rho]\in\widehat\B(\C)$. As above, this implies that $2[M_\rho]$ is in $\widehat\B(k)$. 
\end{proof}

\subsection{Cusped manifolds}\label{cuspsection}
If $M$ has cusps, Reid--Maclachlan~\cite{ReidMaclachlan} shows that the geometric representation has image in $\PSL(2,K)$.
It thus follows from Theorem~\ref{Zickertthm} that $M$ has a fundamental class $[M]\in\widehat\B(K)_{\PSL}$. 
Neumann--Yang~\cite{NeumannYang} show that the Bloch invariant of $M$ is always in $\B(k)$, but they define $\B(k)$ as the kernel of $z\mapsto 2z\wedge (1-z)$. With our definition, only $2[M]$ is in $\B(k)$. An explicit example whith $[M]\notin\B(k)$ is given by the manifold $m009$ in the SnapPea census. Similarly, only $2[M]$ is in $\widehat\B(k)_{\PSL}$. Using remark~\ref{PSLBhatlift} one checks that $2[M]$ always lifts to $\widehat\B(k)$, and by Lemma~\ref{SLPSL}, $8[M]$ lifts canonically. We do not believe that a canonical lift of $2[M]$ is possible, so this result is likely to be optimal.

\subsubsection{Knot complements}
If $M$ is a knot complement, Reid--Machlachlan~\cite[Corollary~4.2.2]{ReidMaclachlan} implies that $K=k$. The obstruction to a lift of $[M]\in\widehat\B(k)_{\PSL}$ to $\widehat\B(k)$ is a $\Z/2\Z$--valued knot invariant, which by Remark~\ref{PSLBhatlift} is explicitly computable. For example, $[M]$ lifts for the figure $8$ knot complement and the $5_2$ knot complement, but not for the $6_1$ knot complement. Since the significance of this invariant is unclear at this moment, we spare the reader for the computations.

\bibliographystyle{plain}
\bibliography{mybib}

\end{document}